\renewcommand{\R}{\mathbb{R}}
\newcommand{\N}{\mathbb{N}}
\newcommand{\U}{\mathcal{U}}
\newcommand{\X}{\mathcal{X}}
\newcommand{\sX}{{_\X}}
\newcommand{\sU}{{_\U}}
\newcommand{\ddes}{{d_{\textnormal{o}}}}
\newcommand{\dran}{{d_{\textnormal{r}}}}
\renewcommand{\e}{{\varepsilon}}
\newcommand{\CIEL}{\mathbf{L}}
\newcommand{\CIEa}{\mathbf{a}}
\newcommand{\CIEb}{\mathbf{b}}
\DeclareMathOperator{\supp}{supp}
\DeclareMathOperator{\Abs}{Abs}
\DeclareMathOperator{\Sca}{Sca}
\DeclareMathOperator{\Geo}{Geo}
\theoremstyle{thmstyleone}%
\newtheorem{theorem}{Theorem}[section]
\Crefname{assumption}{Assumption}{Assumptions}
\newtheorem{claimnew}[theorem]{Claim}
\newtheorem{proposition}[theorem]{Proposition}%
\theoremstyle{thmstyletwo}%
\theoremstyle{thmstylethree}%
\begin{document}

\title[The Continuous Stochastic Gradient Method]{The Continuous Stochastic Gradient Method}
\subtitle{Part II - Application and Numerics}


\author[1]{\fnm{Max} \sur{Grieshammer}}\email{max.grieshammer@fau.de}

\author[1,2]{\fnm{Lukas} \sur{Pflug}}\email{lukas.pflug@fau.de}

\author[1]{\fnm{Michael} \sur{Stingl}}\email{michael.stingl@fau.de}
\author*[1]{\fnm{Andrian} \sur{Uihlein}}\email{andrian.uihlein@fau.de}

\affil[1]{\orgdiv{Department of Mathematics, Chair of Applied Mathematics}, \orgname{Friedrich-Alexander-Universität Erlangen-Nürnberg (FAU)}}
\affil[2]{\orgdiv{Competence Unit for Scientific Computing}, \orgname{Friedrich-Alexander-Universität Erlangen-Nürnberg (FAU)}}




\abstract{In this contribution, we present a numerical analysis of the \textit{continuous stochastic gradient} (CSG) method, including applications from topology optimization and convergence rates. In contrast to standard stochastic gradient optimization schemes, CSG does not discard old gradient samples from previous iterations. Instead, design dependent integration weights are calculated to form a linear combination as an approximation to the true gradient at the current design. As the approximation error vanishes in the course of the iterations, CSG represents a hybrid approach, starting off like a purely stochastic method and behaving like a full gradient scheme in the limit.

In this work, the efficiency of CSG is demonstrated for practically relevant applications from topology optimization. These settings are characterized by both, a large number of optimization variables \textit{and} an objective function, whose evaluation requires the numerical computation of multiple integrals concatenated in a nonlinear fashion. Such problems could not be solved by any existing optimization method before.

Lastly, with regards to convergence rates, first estimates are provided and confirmed with the help of numerical experiments.
}

\keywords{Stochastic Gradient Scheme, Convergence Analysis, Step Size Rule, Backtracking Line Search, Constant Step Size}
\pacs[MSC Classification]{65K05, 90C06, 90C15, 90C30}
\maketitle
\bmhead{Acknowledgments}
The research was funded by the Deutsche Forschungsgemeinschaft (DFG, German Research Foundation) - Project-ID 416229255 - CRC 1411).

\section{Introduction}
In this paper, we present a numerical analysis of the \textit{Continuous Stochastic Gradient} (CSG) method, which was first proposed in \cite{pflug_CSG}. Later, in \cite{CSGPart1}, it was shown that the error in the CSG gradient and objective function approximation vanishes during the course of the iterations. This key property of CSG yields strong convergence results known from classic gradient methods, e.g., convergence of the sequence of iterates for constant step sizes, which are beyond the scope of standard stochastic approaches known from literature, like the \textit{Stochastic Gradient} (SG) method \cite{Monro1951}, or the \textit{Stochastic Average Gradient} (SAG) method \cite{LeRoux2017}.

Furthermore, the approximation property of CSG significantly increases the set of possible applications, allowing for more complex structures in the optimization problem than the schemes listed before. While CSG was shown to perform superior to various stochastic optimization approaches on academic examples \cite{CSGPart1}, it remains to see if this is also the case for more involved applications. For this purpose, we consider several optimization problems arising in the context of optimal nanoparticle design. These applications focus on optimization with respect to the resulting color of a particulate product, as it represents one of the most prominent fields of research within this setting \cite{Color1,Color2,Color3,Color4,Color5,PAMM}.

Moreover, all convergence results stated in \cite{CSGPart1} provide no insight on the \textit{rate} of convergence. Since this plays a crucial role for the practicability of CSG, it is of great importance to further analyze this quantity. In this contribution, we propose estimated convergence rates for the general CSG method and verify the numerically.

\subsection{Structure of the Paper}
\Cref{sec:Application} introduces the application from nanoparticle optics, mentioned above. Two different methods to model the particle, varying greatly in computational effort and design dimension, are presented. After detailing the setting and challenges in the low-dimensional optimization problem, we compare the results of the CSG method to different approaches based on the \textit{fmincon} algorithm provided by MATLAB (\Cref{subsec:ApplicationsNumerics}). Later on, we analyze the high-dimensional problem formulation purely within the CSG framework, since a comparison with generic deterministic optimization schemes is out of scope, due to the associated computational complexity.

Afterwards, \Cref{sec:ErrorEstimates} shortly covers techniques to estimate the gradient approximation error during the optimization, before we focus on the convergence rate of CSG in \Cref{sec:ConvergenceRates}. While the expected rates stated therein are \textit{not} proven, we present detailed numerical examples to solidify our claims. Furthermore, we analyze how the convergence rate depends on the dimension of integration and how to avoid slow convergence, if the objective function admits additional structure.

\section{Nanoparticle Design Optimization}\label{sec:Application}
 Since the design of a nanoparticle, i.e., its shape, size, material distribution, etc., heavily impacts its optical properties, the task of optimizing a nanoparticle design with respect to a specific optical property arises naturally \cite{taylor2011painting}. In this section, we are interested in using hematite nanoparticles to optimize the color of a paint film \cite{buxbaum2008industrial}. Thus, we start by introducing our main framework for this application.
\subsection{Color Spaces}
First off, we should explain what \textit{optimal color} means in our setting. There are several different methods to describe color mathematically, e.g., assigning each color an RGB representation vector $\mathbf{v}\in\R^3$, where the three components of $\mathbf{v}$ correspond to the red, green and blue value of the color. In our application, we are interested in the color of the paint film as it appears to the human eye. Therefore, the underlying color space should be chosen based on the following property: 
\begin{quote}
\textit{If the euclidean distance between the representation vectors of two colors is small, the colors should be almost indistinguishable to the human eye.}
\end{quote}
As it turns out, the RGB color space is a very poor choice with respect to this feature. Hence, we instead choose the CIELAB color space \cite{CIELAB1}, which was introduced by the International Commission of Illumination (Commission Internationale de l'Eclairage, CIE), as it was designed with this exact purpose in mind. The CIELAB representation of a color consists of three values $\CIEL$, $\CIEa$ and $\CIEb$. Here, $\CIEL$ corresponds to the lightness of a color and ranges from 0 (black) to 100 (white). The values of $\CIEa$ and $\CIEb$, typically within the range of $\pm 150$, describe the colors position with respect to the opponent color pairs green-red and blue-yellow. A short overview is given in \Cref{fig:CIELABColormap}.

Another color space, which naturally arises from our setting, is the CIE 1931 XYZ color space \cite{CIECMF}. The values of X, Y and Z can be calculated by integrating the optical properties of a particle over the spectrum of visible light (400nm - 700nm), which we denote by $\Lambda$. Each of these integrations is weighted by the corresponding color matching functions $x,y,z:\Lambda\to\R$.

Thus, in our application, we will first calculate the CIE 1931 XYZ representation of the resulting color and then use the (nonlinear) color space transformation $\Psi:\R^3\to\R^3$ with $\Psi(\text{X,Y,Z}) = (\CIEL,\CIEa,\CIEb)^\top$, to work in the CIELAB color space. For this transformation, we define a reference white point
\begin{equation*}
    \begin{pmatrix} \text{X}_r \\ \text{Y}_r \\ \text{Z}_r \end{pmatrix} = \begin{pmatrix} 94.72528492 \\ 100 \\ 107.13012997\end{pmatrix}
\end{equation*}
and denote the relative XYZ values by
\begin{equation*}
    \tilde{\text{X}} = \tfrac{\text{X}}{\text{X}_r}, \quad \tilde{\text{Y}} = \tfrac{\text{Y}}{\text{Y}_r}, \quad\text{and}\quad \tilde{\text{Z}} = \tfrac{\text{Z}}{\text{Z}_r}.
\end{equation*}
Utilizing the intended CIE parameters $\epsilon = \tfrac{216}{24389}$ and $\kappa = \tfrac{24389}{27}$, the LAB color values are then given by
\begin{equation*}
    \CIEL = 116f(\tilde{\text{Y}}) -16,\quad \CIEa = 500\big(f(\tilde{\text{X}})-f(\tilde{\text{Y}})\big)\quad\text{and}\quad \CIEb = 200\big(f(\tilde{\text{Y}})-f(\tilde{\text{Z}})\big),
\end{equation*}
where $f:\R\to\R$ is defined as
\begin{equation*}
    f(t) = \begin{cases} \sqrt[3]{t} & \text{if }\tilde{\text{X}} > \epsilon \\ \tfrac{\kappa t + 16}{116} & \text{otherwise} \end{cases}.
\end{equation*}
\begin{figure}
  \begin{minipage}[c]{0.5\textwidth}
    \includegraphics[width=\textwidth]{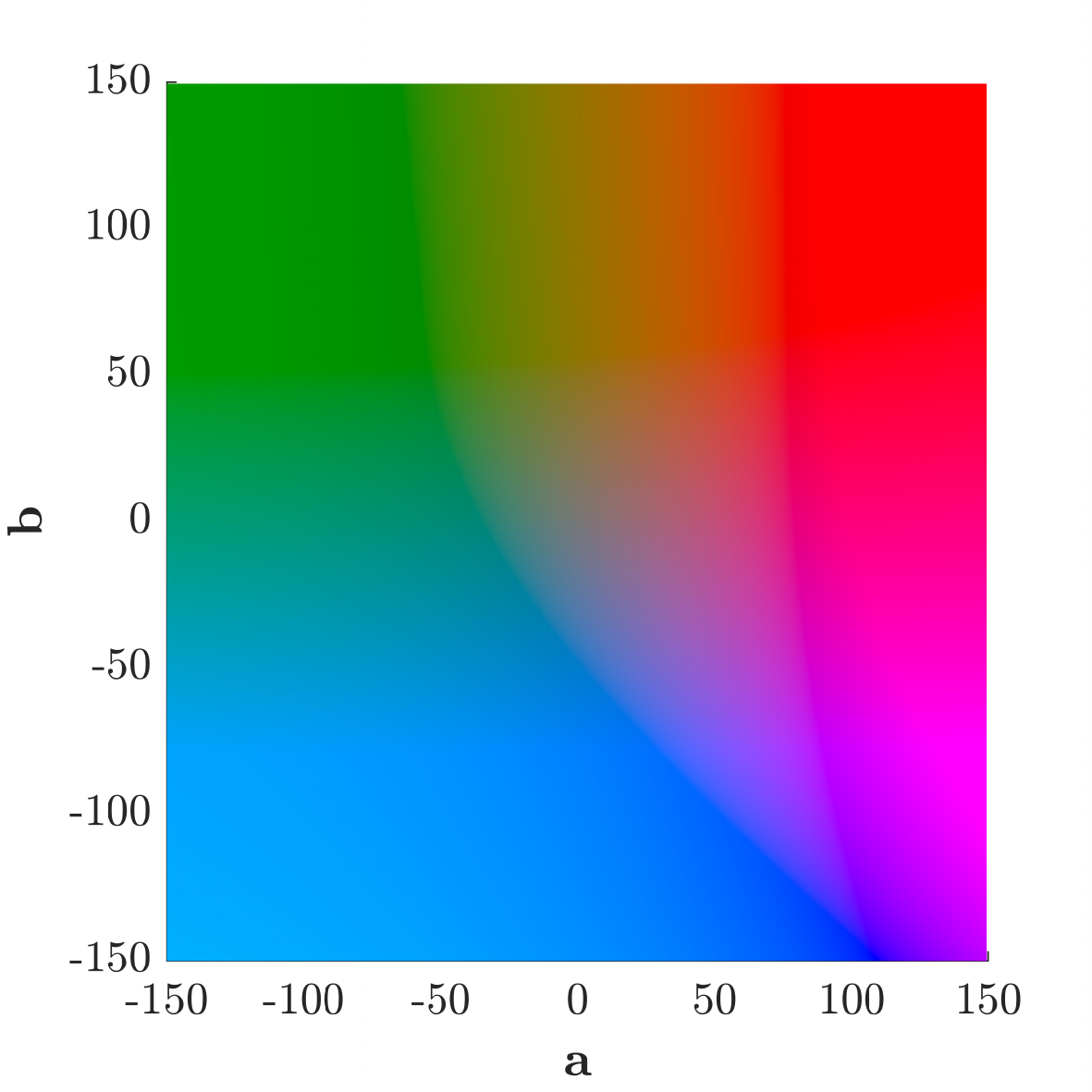}
  \end{minipage}\hfill
  \begin{minipage}[c]{0.45\textwidth}
    \caption{Resulting color for various different values of $\CIEa$ and $\CIEb$. Positive values of $\CIEa$ result in red colors, while colors corresponding to negative values of $\CIEa$ appear green. Similarly, positive $\CIEb$ values yield yellow colors, while negative $\CIEb$ values shift the color into the blue spectrum. In this figure, we fixed $\CIEL = 50$.}
    \label{fig:CIELABColormap}
  \end{minipage}
\end{figure}
\subsection{Mie Theory and Discrete Dipole Approximation}
Given a nanoparticle shape and material, we can use the time-harmonic Maxwell's equations to calculate its optical properties. Specifically, in our setting, we are interested in the absorption ($\Abs$), scattering ($\Sca$) and geometry factor ($\Geo$). The time required and precision achieved are, of course, dependent on our model of the nanoparticle and the method used to solve Maxwell's equations. For our setting, we choose two different approaches.

On the one hand, we will use the discrete dipole approximation (DDA) \cite{DDA1,DDA2,Yurkin2011}, in which the particle is discretized into an equidistant grid of dipole cells. Thus, DDA allows the analysis of arbitrary particle shapes and material distributions. The downside lies within the computational complexity of the method, which scales with the total number of dipoles and therefore grows rapidly when increasing the resolution. While the CSG method is still capable of solving the resulting optimization problem in our experiments, the tremendous computational cost associated to the DDA approach severely impede a detailed analysis of the problem. Especially, there is no computationally feasible, generic optimization scheme to compare our results with. However, we want to note that optimization in the DDA model has already been done in a slightly simpler setting, where the full integral over $\Lambda$ was replaced by summation over a small number of different wavelengths \cite{SGP}.

On the other hand, Mie theory \cite{MieOriginal,Mie} provides a numerically cheap alternative, at the price of a more restrictive setting. In Mie theory, one only considers radially symmetric particles. In this special setting, it is possible to find analytic solutions based on series expansions to the time-harmonic Maxwell's equations. Therefore, in our first approach, we will only consider core-shell particles, as the utilization of Mie theory allows for a much deeper analysis of the resulting optimization problem and comparison to deterministic optimization approaches, which rely on discretization of the integrals. 

\subsection{Nanoparticles in Paint Film -- Kubelka-Munk Theory}
As mentioned above, the XYZ color values of the paint film can be calculated by integration of the corresponding color matching functions $x,y,z$ and the important optical properties of the nanoparticle. The precise method to obtain X, Y and Z is given by the Kubelka-Munk theory \cite{kubelka1931article}, augmented by a Saunderson correction \cite{garcia2011assessment}. For a paint film, in which nanoparticles with design $u$ are present and which is illuminated by light with wavelength $\lambda\in\Lambda$, the resulting color can be expressed by the $K$ and $S$ value
\begin{equation*}
    K(u,\lambda) = \Abs(u,\lambda)\quad\text{and}\quad S(u,\lambda) = \Sca(u,\lambda)\big(1-\Geo(u,\lambda)\big)
\end{equation*}
via the reflectance
\begin{equation*}
    R_\infty (u,\lambda) = 1 + \frac{8}{3}\frac{K(u,\lambda)}{S(u,\lambda)} - \sqrt{\left(\frac{8}{3}\frac{K(u,\lambda)}{S(u,\lambda)}\right)^2 + \frac{16}{3}\frac{K(u,\lambda)}{S(u,\lambda)}}\, .
\end{equation*}
Now, X, Y and Z can be obtained by
\begin{align*}
    \text{X}(u) &= \int_\Lambda x(\lambda)\frac{(1-\rho_0-\rho_1)R_\infty(u,\lambda)+\rho_0}{1-\rho_1 R_\infty(u,\lambda)}\mathrm{d}\lambda, \\
    \text{Y}(u) &= \int_\Lambda y(\lambda)\frac{(1-\rho_0-\rho_1)R_\infty(u,\lambda)+\rho_0}{1-\rho_1 R_\infty(u,\lambda)}\mathrm{d}\lambda, \\
    \text{Z}(u) &= \int_\Lambda z(\lambda)\frac{(1-\rho_0-\rho_1)R_\infty(u,\lambda)+\rho_0}{1-\rho_1 R_\infty(u,\lambda)}\mathrm{d}\lambda,
\end{align*}
where $\rho_0$ and $\rho_1$ are material parameters. In our setting, which we introduce in the next section, we have $\rho_0 = 0.04$ and $\rho_1 = 0.6$.
\subsection{Problem Formulation}
In our first setting, we consider a radially symmetric core-shell nanoparticle (see \Cref{fig:CoreShellParticle}), where the inner core consists of water, while the outer shell is made of hematite. Thus, the design $u$ consists of the radius $R$ (1nm - 75nm) of the core and the thickness $d$ (1nm - 250nm) of the outer hematite shell. 
\begin{figure}
  \begin{minipage}[c]{0.35\textwidth}
    \includegraphics[width=\textwidth]{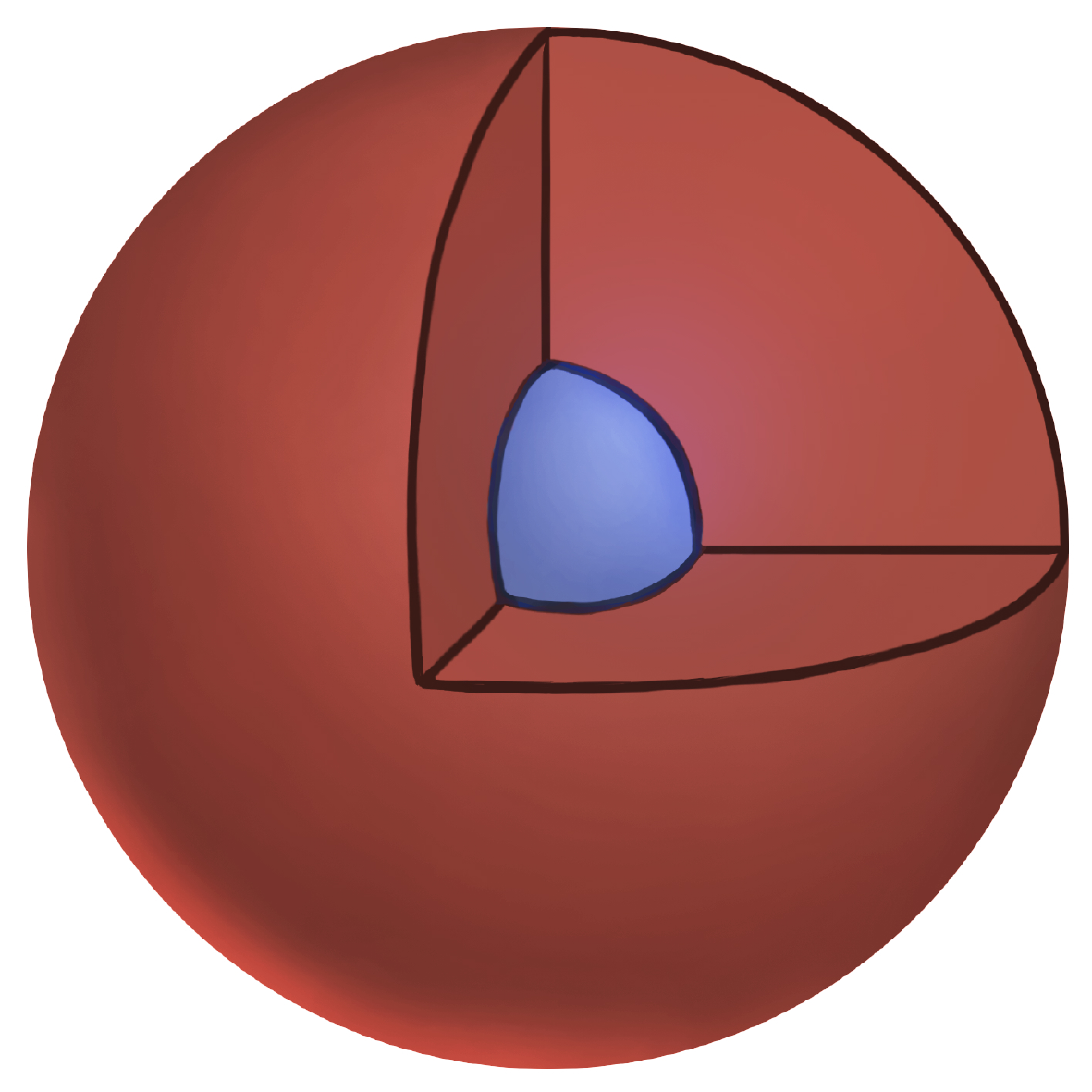}
  \end{minipage}\hfill
  \begin{minipage}[c]{0.6\textwidth}
    \caption{Radially symmetric core-shell nanoparticle. The inner core (blue) has radius $R$ in the range of 1nm - 75nm and consists of water. The thickness of the hematite shell (red) is denoted by $d$ and ranges from 1nm to 250nm.}
    \label{fig:CoreShellParticle}
  \end{minipage}
\end{figure}

As an additional layer of difficulty, we can, in practice, not expect all nanoparticles present in the paint film to be identical copies of design $u$. Instead, when trying to produce nanoparticles of a specific design in large quantities, one usually ends up with a mixture of particles of different designs, following a certain probability distribution $\mu_u$, which is dependent on the intended design $u$. 

We model this aspect by assuming that, given a design $u=(R,d)$, the particles present in the paint film follow a normal distribution (truncated to a reasonable design space $\mathcal{R}\times\mathcal{D}$) centered around $u$, i.e.,
\begin{equation*}
    \tilde{R}\sim\mathcal{N}(R,\tfrac{1}{10}R)\quad\text{and}\quad\tilde{d}\sim\mathcal{N}(d,\tfrac{1}{10}d).
\end{equation*}
Therefore, the $K$ and $S$ values in the Kubelka-Munk model need to be replaced by their averaged counterparts
\begin{align*}
    K(u,\lambda) &= \iint_{\mathcal{R}\times\mathcal{D}}\Abs(\tilde{R},\tilde{d},\lambda)\mathrm{d}\mu_u(\tilde{R},\tilde{d})\\ 
    \intertext{and}
     S(u,\lambda) &= \iint_{\mathcal{R}\times\mathcal{D}} \Sca(\tilde{R},\tilde{d},\lambda)\big(1-\Geo(\tilde{R},\tilde{d},\lambda)\big)\mathrm{d}\mu_u(\tilde{R},\tilde{d}),
\end{align*}
before calculating the reflectance $R_\infty(u,\lambda)$ and integrating it over $\Lambda$.

The objective in our application is to produce a paint of bright red color. Thus, the complete optimization problem reads
\begin{equation}
    \max_{u\in\U}\quad \tfrac{1}{20}\,\CIEL(u) + \tfrac{19}{20}\,\CIEa(u).\label{eq:ProblemSetting1}
\end{equation}
\subsection{Challenges}
The highly condensed fashion, in which \eqref{eq:ProblemSetting1} is formulated, may obscure a lot of the difficulties that arise when trying to solve it. To get a better understanding of the problem, let us first analyze the abstract structure of the objective function $J(u) = \tfrac{1}{20}\,\CIEL(u) + \tfrac{19}{20}\,\CIEa(u)$:
\begin{equation*}
    \begin{pmatrix} \Abs \\ \Sca \\ \Geo \end{pmatrix} \xrightarrow{\substack{\text{integrate} \\ \mathcal{R}\times\mathcal{D}} } \begin{pmatrix} K\\S\end{pmatrix} \xrightarrow{\substack{\text{Kubelka-} \\ \text{Munk}} } R_\infty\xrightarrow{\substack{\text{integrate}\\ \Lambda}} \begin{pmatrix} \text{X} \\ \text{Y} \\ \text{Z}\end{pmatrix} \xrightarrow{\substack{\text{color} \\ \text{transf.}\Psi} } \begin{pmatrix} \CIEL\\ \CIEa\\ \CIEb\end{pmatrix}\xrightarrow[]{}J(u).
\end{equation*}
Since calculating $J(u)$ and $\nabla J(u)$ requires integrating the optical properties in multiple dimensions and since evaluating said properties for any combination of $\tilde{R}$, $\tilde{d}$ and $\lambda$ requires solving the time-harmonic Maxwell's equations, standard deterministic approaches, e.g., full gradient methods, run into a prediscretization problem.

On the one hand, the number of integration points needs to be sufficiently large for our setting. In \Cref{fig:SliceSetting1}, a slice through the objective function for a fixed value of $R$ and several different amounts of integration points is shown. While we actually do not care too much about the approximation error resulting from a small number of integration points, the artificial local maxima introduced into the objective function by the discretization severely impact the quality of the optimization. In other words, many solutions to the discretized problem are completely unrelated to solutions to \eqref{eq:ProblemSetting1}. We want to note that, even though not all of the stationary points in \Cref{fig:SliceSetting1} correspond to stationary points of \eqref{eq:ProblemSetting1}, the prediscretization still leads to very flat regions in the objective functions, which hinder the performance of many solvers. In \Cref{fig:ApplicationFlatRegions}, this effect is displayed.

On the other hand, the number of integration points is heavily restricted by the computational cost associated to the evaluation of $\Abs$, $\Sca$ and $\Geo$. While medium resolutions ($25^3\sim15000$ points in total) are still numerically tractable for simple Mie particles, they are outright impossible to achieve in the more general DDA setting, which we want to consider later. For comparison: The optimization in \cite{SGP} was carried out using a discretization consisting of 20 points in total.

We want to emphasize that standard SG-type schemes, or even the \textit{Stochastic Composition Gradient Descent} (SCGD) method \cite{SCGDPaper}, which was used for the comparison for composite objective functions in \cite[Section 4.2]{CSGPart1}, are not capable of solving \eqref{eq:ProblemSetting1}, due to the special structure of $J$.
\begin{figure}
  \begin{minipage}[c]{0.5\textwidth}
    \includegraphics[width=\textwidth]{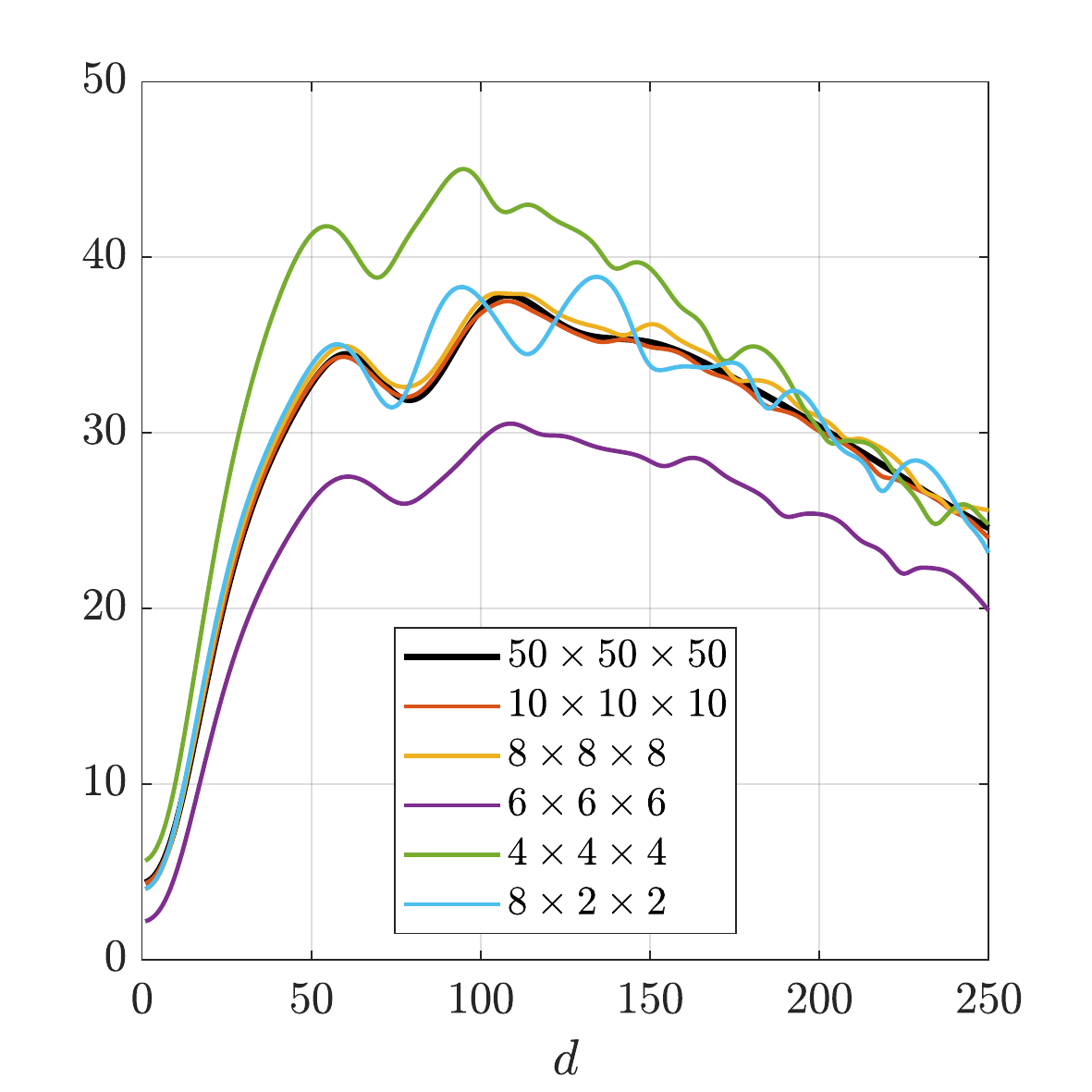}
  \end{minipage}\hfill
  \begin{minipage}[c]{0.45\textwidth}
    \caption{Objective function values for fixed core radius of 3nm. Different graphs correspond to different discretizations. The label of a curve shows into how many points the integrals over $\Lambda$, $\mathcal{R}$ and $\mathcal{D}$ have been split, respectively. Each of the discretizations introduces artificial stationary points into the objective function.}
    \label{fig:SliceSetting1}
  \end{minipage}
\end{figure}

\begin{figure}
    \centering
    \begin{minipage}[t][][c]{0.48\textwidth}
        \centering
        \includegraphics[width = \textwidth]{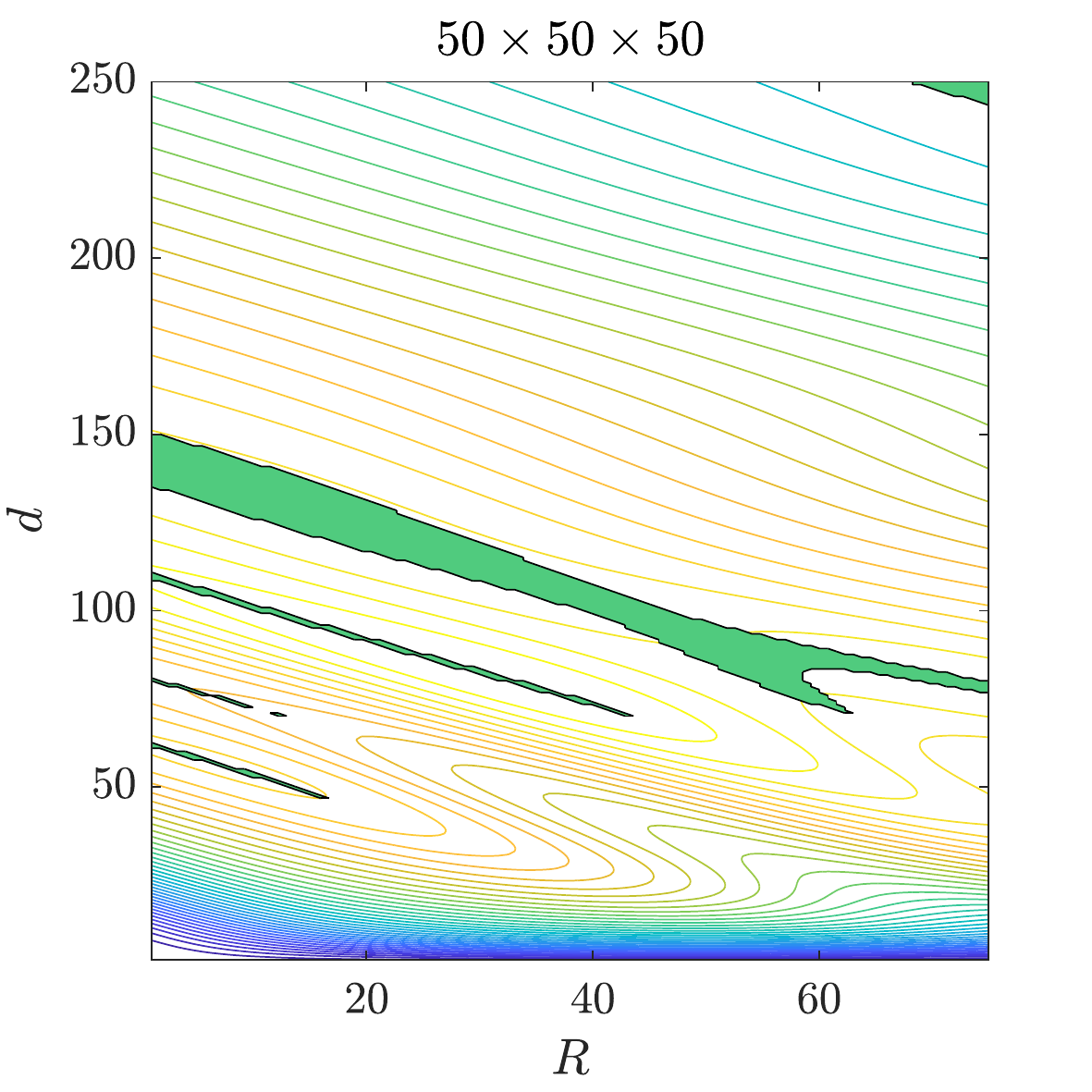}%
    \end{minipage}\hfill
    \begin{minipage}[t][][c]{0.48\textwidth}
        \centering
        \includegraphics[width = \textwidth]{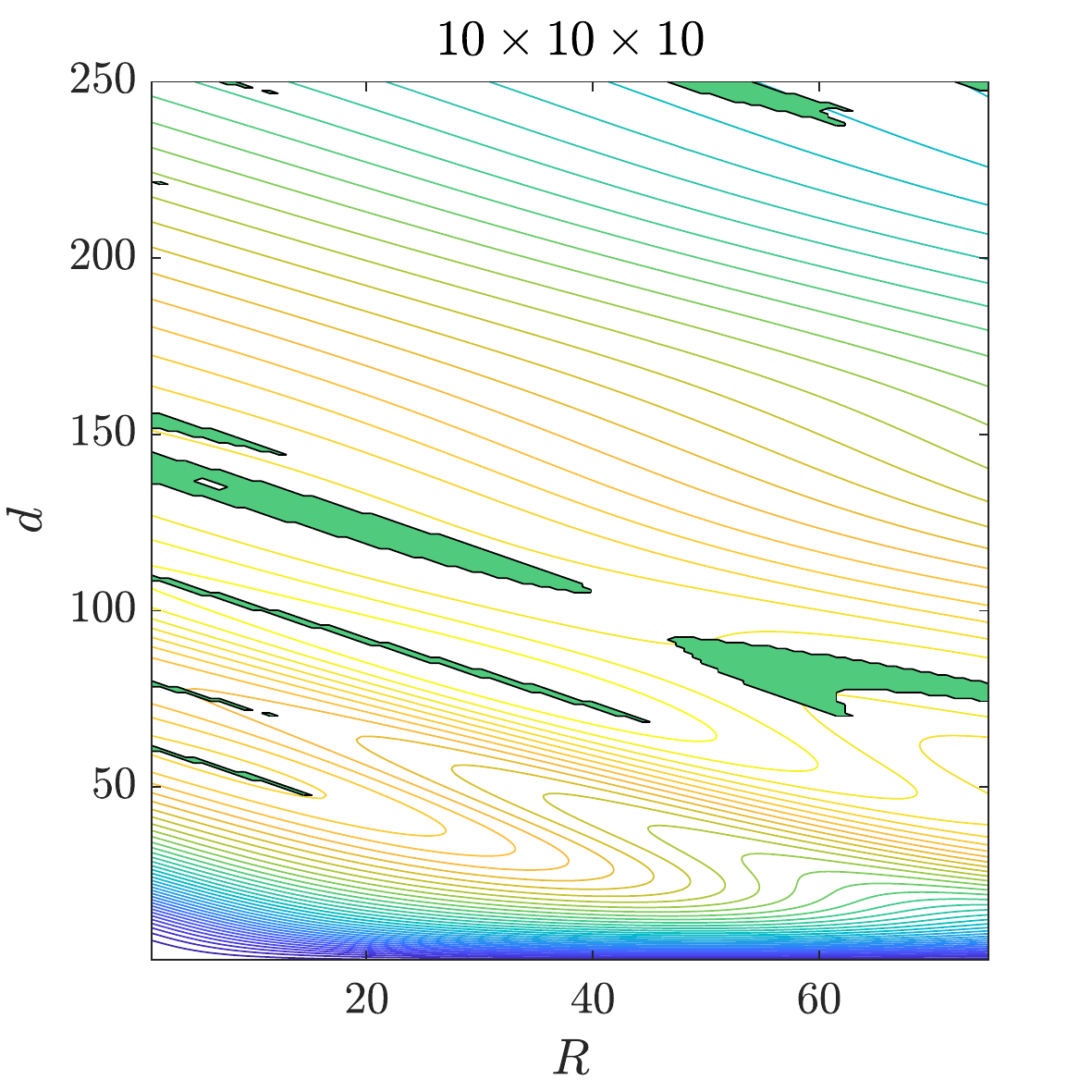}%
    \end{minipage}
    \begin{minipage}[t][][c]{0.48\textwidth}
        \centering
        \includegraphics[width = \textwidth]{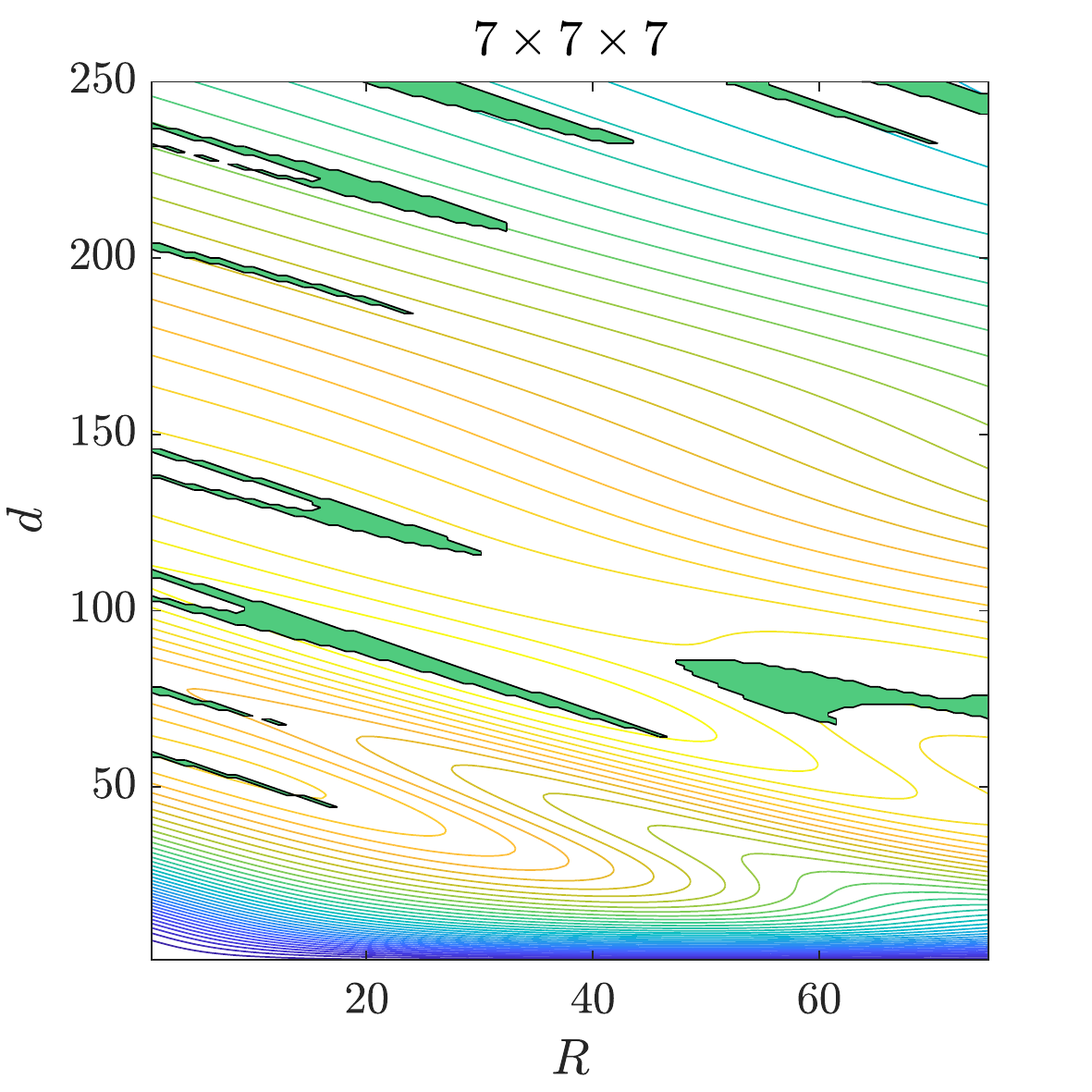}%
    \end{minipage}\hfill
    \begin{minipage}[t][][c]{0.48\textwidth}
        \centering
        \includegraphics[width = \textwidth]{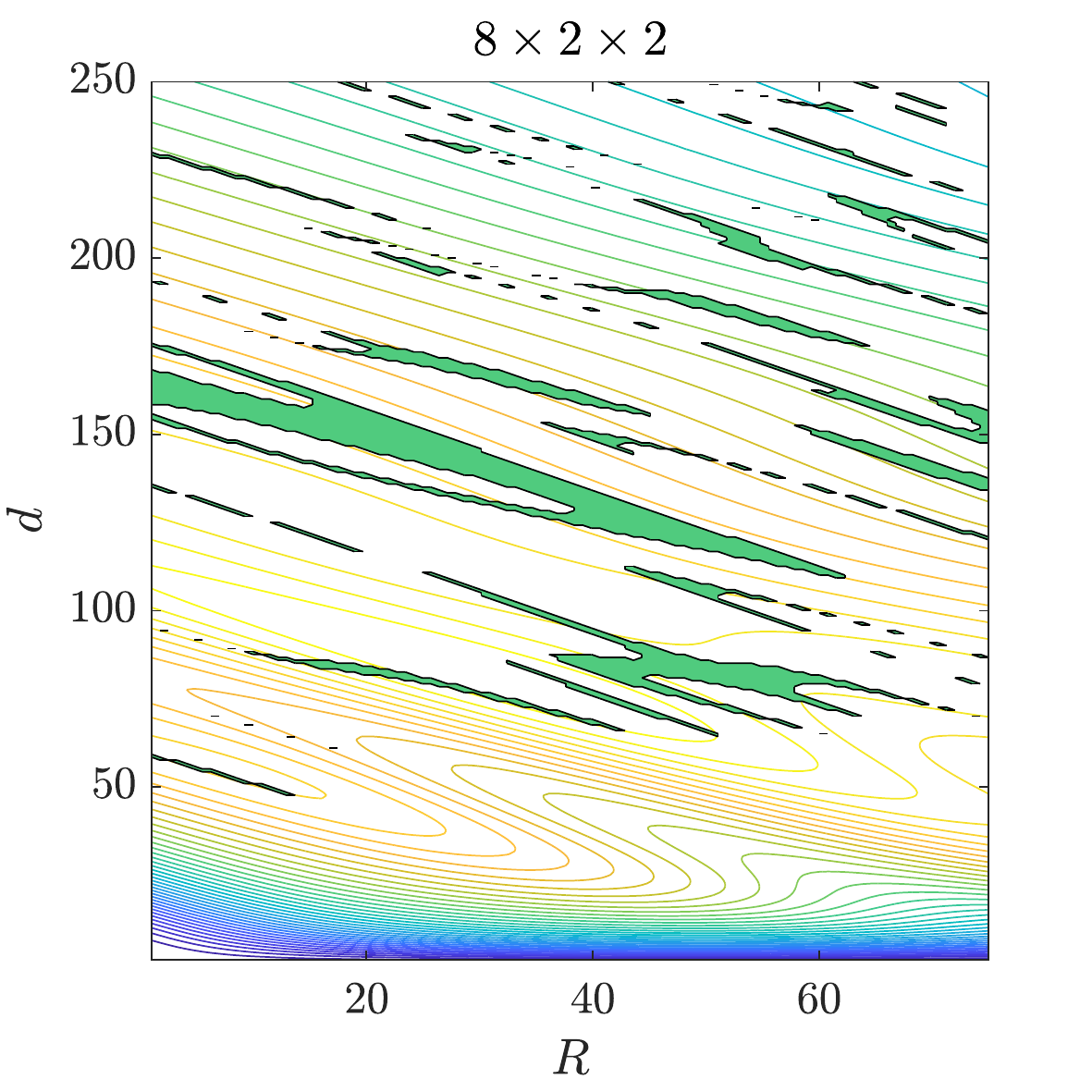}%
    \end{minipage}
    \caption{Flat regions in the discretizted objective functions. The underlying contour plot corresponds to the discretization of $\Lambda\times\mathcal{R}\times\mathcal{D}$ into $50\times50\times50$ points. For each figure, the green region consists of all points at which the euclidean norm of the gradient of the discretized objective function is smaller than 0.05. The discretizations of $\Lambda\times\mathcal{R}\times\mathcal{D}$ are given in the titles, respectively.}
    \label{fig:ApplicationFlatRegions}
\end{figure}

\subsection{Discretization}
For the reasons mentioned above, we will only compare the results obtained by CSG to generic deterministic optimization schemes for various choices of discretization. Since the integration over $\Lambda$ admits no special structure, we always choose an equidistant partition for this dimension of integration. However, for the integration over $\mathcal{R}\times\mathcal{D}$, we can use our knowledge of $\mu_u$ to achieve a better approximation to the true integral. Instead of dividing $\mathcal{R}\times\mathcal{D}$ into an equidistant grid, we utilize the fact that $\tilde{R}$ and $\tilde{d}$ are normal distributed independent from each other. Since, for a normal distribution, $99.7\%$ of all weight is concentrated in the $3\sigma$-interval around the mean value, we may only discretize this portion of the full domain in each step. 

Moreover, we know the precise density function for both $\tilde{R}$ and $\tilde{d}$. Thus, given a design $u_n=(R_n,d_n)$, we will partition $\left( R_n - \tfrac{3}{10}R_n, R_n + \tfrac{3}{10}R_n\right)$ and $\left( d_n - \tfrac{3}{10}d_n, d_n + \tfrac{3}{10}d_n\right)$ not into equidistant intervals, but instead in intervals of equal weight. This procedure is illustrated in \Cref{fig:CDF,fig:DFColor} and produces very good results even for a small number of sample points.

However, as we have already seen in \Cref{fig:SliceSetting1}, even this dedicated discretization scheme introduces additional propbelms into \eqref{eq:ProblemSetting1}. Furthermore, we want to emphasize that choosing a reasonable discretization is a challenge of its own. Not only is there no a priori indication for the general magnitude of the number of points needed, it is also unclear whether or not one should use the same number of points in each direction. 

\begin{figure} 
    \centering
    \begin{minipage}[b][][c]{0.48\textwidth}
        \centering
        \includegraphics[width = \textwidth]{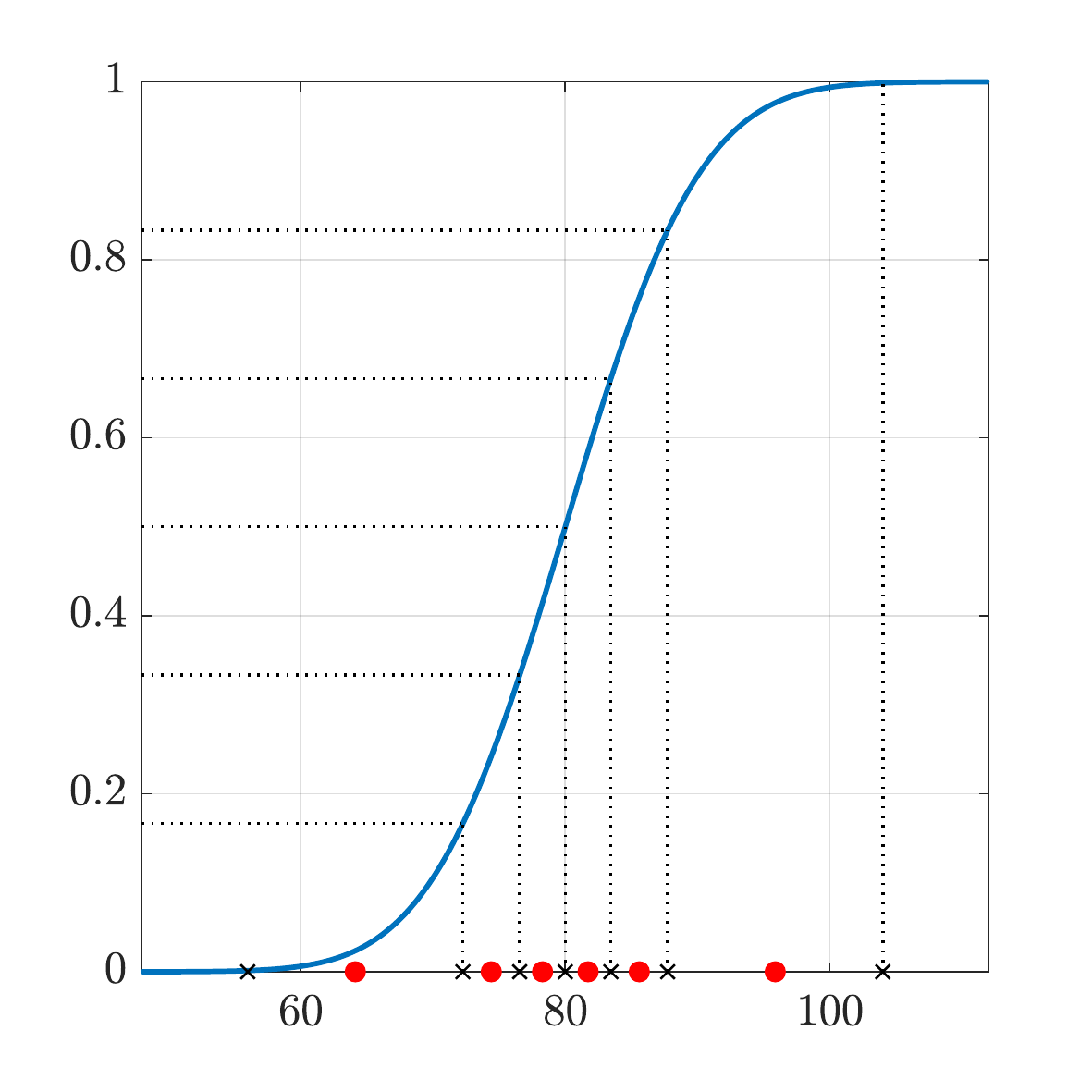}
    \end{minipage}\hfill
    \begin{minipage}[b][][c]{0.48\textwidth}
        \centering
        \includegraphics[width = \textwidth]{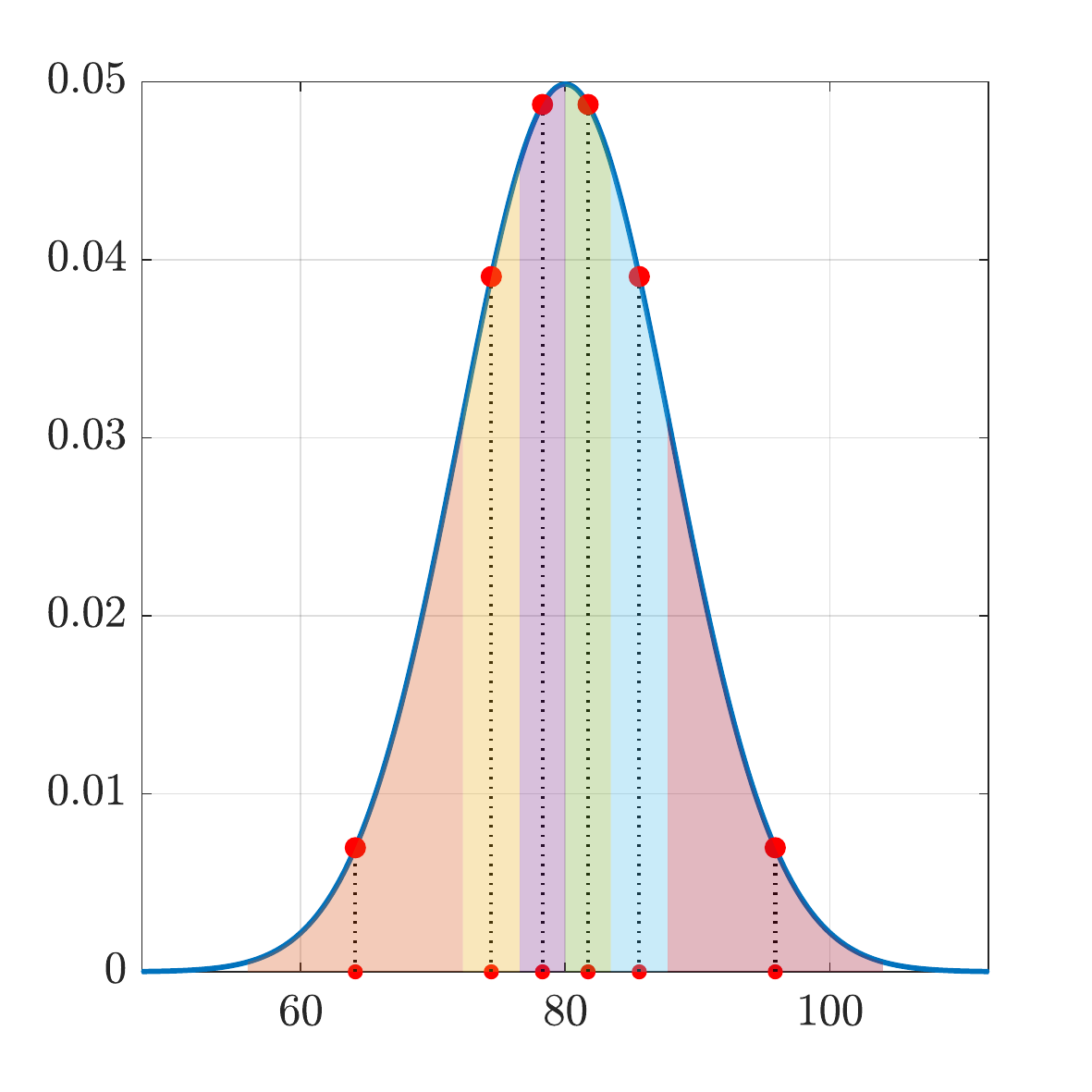}%
    \end{minipage}
    \par
    \begin{minipage}[t][][c]{0.48\textwidth}
        \caption{Cumulative density function for $\tilde{R}$ in the case $R=80$. The six integration points (red dots) are obtained by dividing $(0,1)$ in six intervals of equal size and calculating the midpoints of the resulting preimages (black crosses). Note that the preimages are first projected on the $3\sigma$-interval.}
        \label{fig:CDF}
    \end{minipage}\hfill
    \begin{minipage}[t][][c]{0.48\textwidth}
        \caption{Density function for $\tilde{R}$ in the case $R=80$. The red dots represent the six integration points as detailed in \Cref{fig:CDF}. By their special construction, each shaded region under the curve is of equal area.}
        \label{fig:DFColor}
    \end{minipage}
\end{figure}

\subsection{Numerical Results}\label{subsec:ApplicationsNumerics}
As mentioned above, the restriction to radially symmetric nanoparticles allows us to apply standard blackbox solvers to \eqref{eq:ProblemSetting1}, in order to have a comparison for the CSG results. In our case, we chose the \textit{fmincon} implementation of an interior point algorithm, integrated in MATLAB, as is it an easy-to-use blackbox algorithm that yields reproducible results. 

Specifically, we compared the results of SCIBL-CSG with empirical weights on $\mathcal{R}\times\mathcal{D}$ and exact hybrid weights on $\Lambda$ (cf. \cite[Section 3]{CSGPart1}) to the fmincon results for three different discretization schemes of $\Lambda\times\mathcal{R}\times\mathcal{D}$. Two of these are equal in each dimension ($10\times 10\times10$ and $7\times7\times7$), while the last one is asymmetric ($8\times2\times2$). Once again, we want to stress that finding an appropriate discretization scheme already requires a thorough analysis of \eqref{eq:ProblemSetting1}. The specific choices listed above represent three of the most promising candidates found during our investigation. 

As we consider this example to be a prototype for more advanced settings from topology optimization, e.g., switching the setting to the DDA model later, we compare the different approaches with respect to the number of inner gradient evaluations, since this is by far the most time-consuming step in these cases. To be precise, an evaluation represents the calculation of $\Abs$, $\Sca$, $\Geo$, $\nabla\Abs$, $\nabla\Sca$ and $\nabla\Geo$ for a single $(\lambda,\tilde{R},\tilde{d})\in\Lambda\times\mathcal{R}\times\mathcal{D}$.

Since the produced iterates depend on the initial design, we randomly selected 500 starting points in the whole design domain $\U=[1,75]\times[1,250]$. In each optimization run, the total number of evaluations was limited to 50.000 for fmincon and to 5.000 for SCIBL-CSG. To obtain an overview of the general performance of the different approaches, we take snapshots of all iterates after different amounts of evaluations. The results are given in \Cref{fig:ApplicationResults01} and \Cref{fig:ApplicationResults02} and yield a good impression on how fast each method tends to find solutions to \eqref{eq:ProblemSetting1}. Note that, for the sake of readability and better comparison, the final CSG iterates after 5.000 evaluations are shown in all graphs labeled with a higher number of total evaluations.

By comparing \Cref{fig:ApplicationResults01} and \Cref{fig:ApplicationResults02} with \Cref{fig:ApplicationFlatRegions}, we observe that the artificial flat regions discussed earlier indeed slow down the optimization progress for all choices of prediscretization. Furthermore, we note that only the highest resolution $10\times10\times10$ overcomes this approximation error, at the cost of the largest amount of evaluations needed. In contrast, the resolutions $7\times7\times7$ and $8\times2\times2$ converge much faster, but some of the final designs are no stationary points of \eqref{eq:ProblemSetting1}. Out of the 500 optimization runs we performed, $7\times7\times7$ converged to a wrong design, i.e., artificial local minimum, 16 times (3.2\%). For $8\times2\times2$, a wrong design was found in 218 (43.6\%) instances, see \Cref{fig:ApplicationResults02}. 

Lastly, we are interested in the performance of each method with respect to $J(u_n)$ over the course of the iterations. Since each local solution to \eqref{eq:ProblemSetting1} admits a different objective function value, we focus only on the global maximum. For all approaches, we selected all runs whose final designs are closer to the global maximum of \eqref{eq:ProblemSetting1} than to any other stationary point. The results are shown in \Cref{fig:MedianSetting1} and \Cref{fig:QuantilesSetting1}.

\begin{figure} 
    \centering
    \begin{minipage}[b][][c]{0.48\textwidth}
        \centering
        \includegraphics[width = \textwidth]{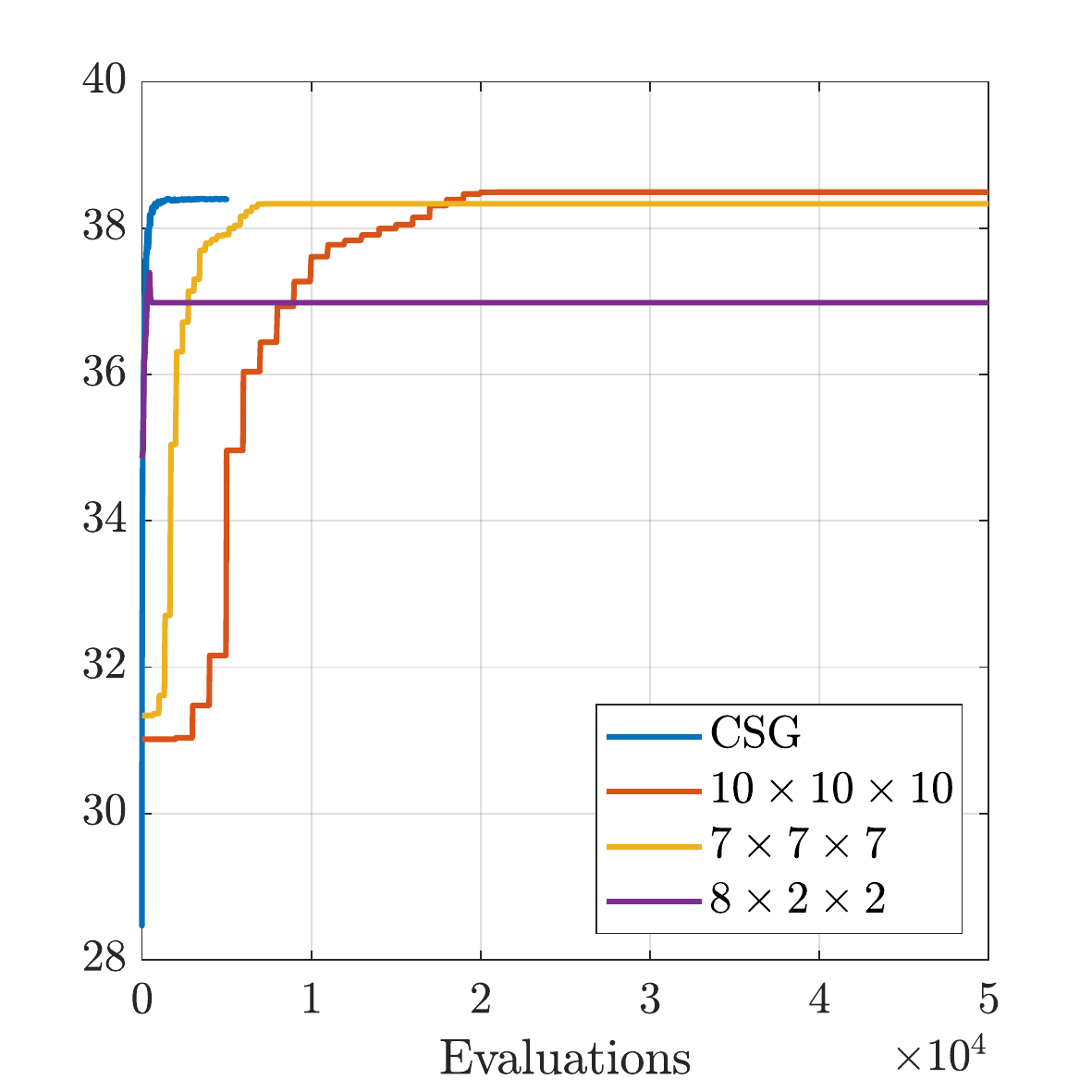}
    \end{minipage}\hfill
    \begin{minipage}[b][][c]{0.48\textwidth}
        \centering
        \includegraphics[width = \textwidth]{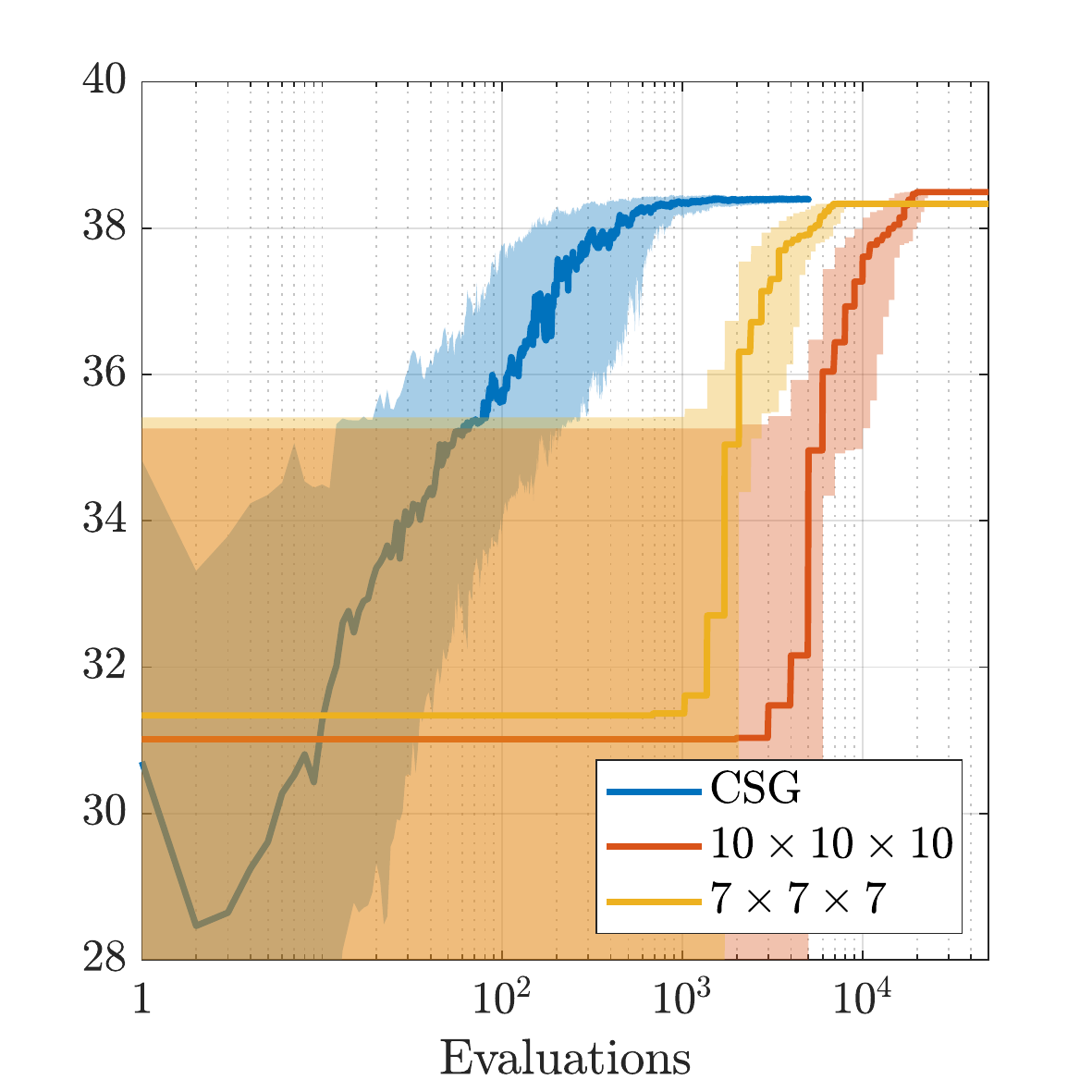}%
    \end{minipage}
    \par
    \begin{minipage}[t][][c]{0.48\textwidth}
        \caption{Median objective function value of all optimization runs in which the final design was closer to the global maximum of \eqref{eq:ProblemSetting1} than to any other stationary point. The values were obtained using a discretization into $50\times50\times50$ points.}
        \label{fig:MedianSetting1}
    \end{minipage}\hfill
    \begin{minipage}[t][][c]{0.48\textwidth}
        \caption{The medians presented in \Cref{fig:MedianSetting1} (solid lines) and the corresponding quantiles $P_{0.25,0.75}$, indicated by the shaded areas. For better visibility, the number of evaluations is scaled logarithmically and the discretization $8\times2\times2$ was discarded.}
        \label{fig:QuantilesSetting1}
    \end{minipage}
\end{figure}

\begin{figure}
    \centering
    \includegraphics[scale=0.6,clip,trim=70 15 45 0]{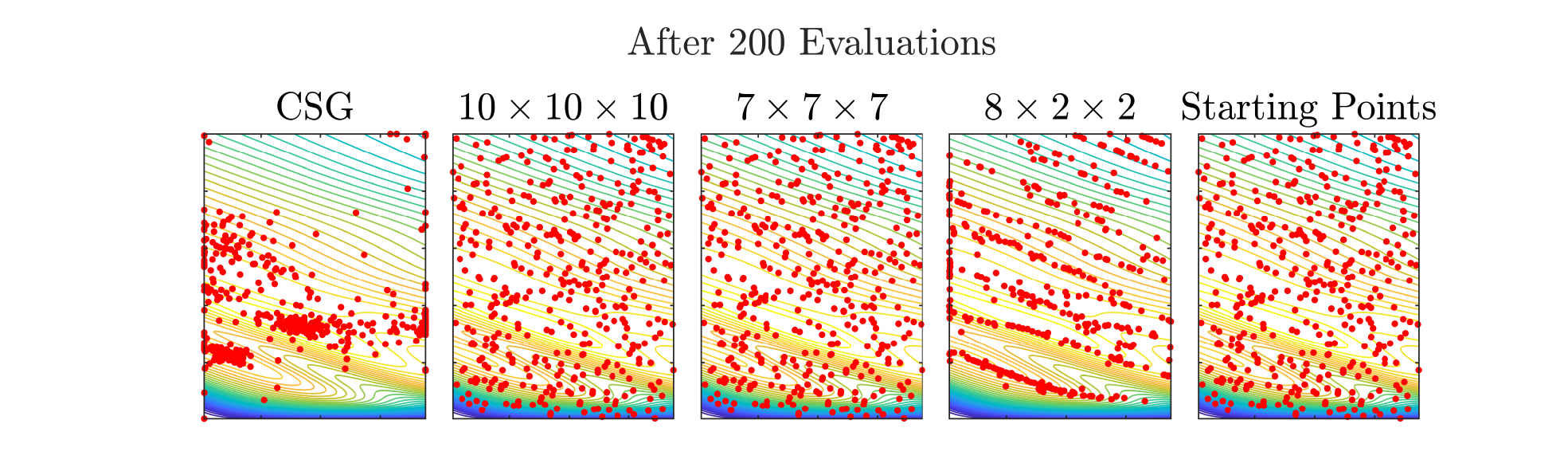}
    \includegraphics[scale=0.6,clip,trim=70 15 45 0]{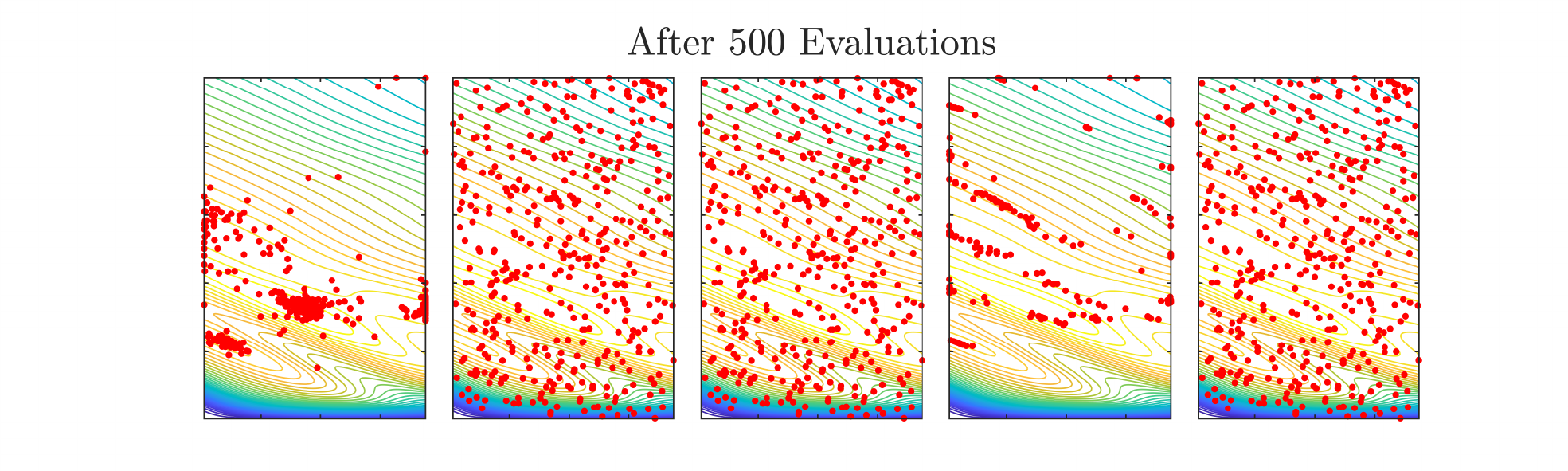}
    \includegraphics[scale=0.6,clip,trim=70 15 45 0]{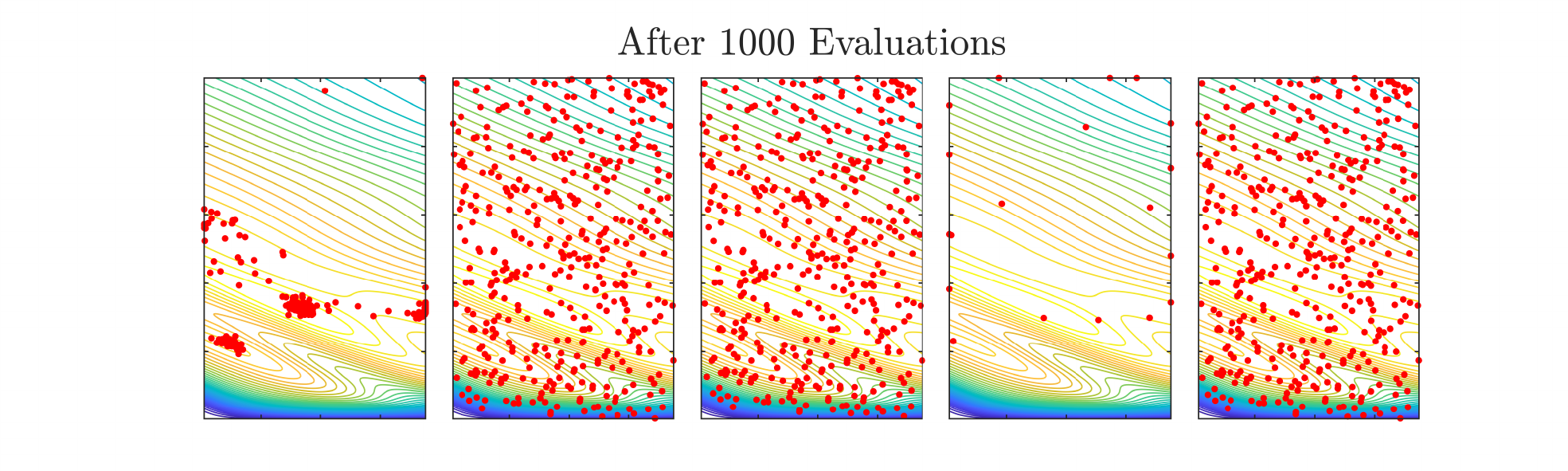}
    \includegraphics[scale=0.6,clip,trim=70 15 45 0]{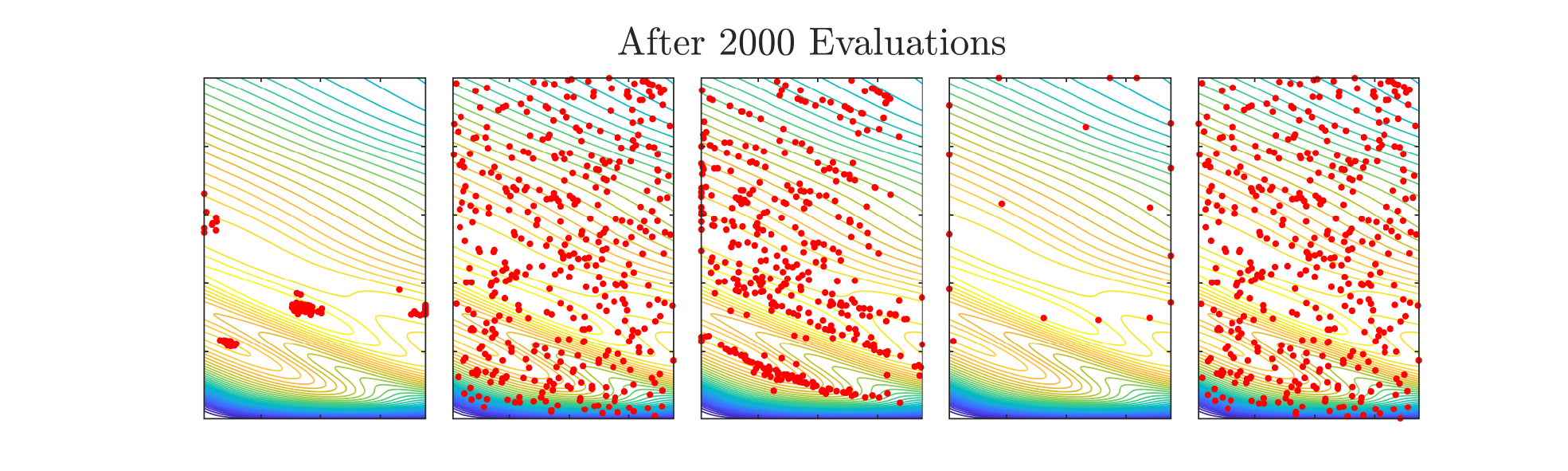}
    \includegraphics[scale=0.6,clip,trim=70 15 45 0]{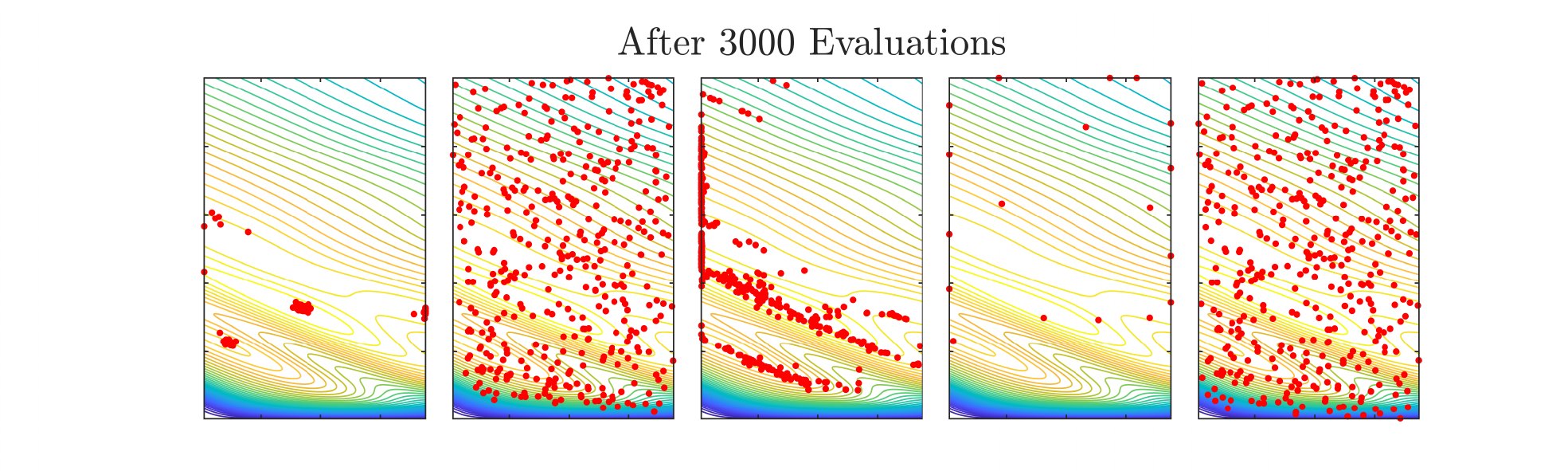}
    \caption{Iterates of the different optimization approaches for \eqref{eq:ProblemSetting1} in the whole design domain $\U=[1,75]\times[1,250]$. For fmincon, the discretization of $\Lambda\times\mathcal{R}\times\mathcal{D}$ is given in the titles, respectively. To measure the progress, the starting points are also shown. As mentioned above, an evaluation corresponds to the calculation of $\Abs$, $\Sca$, $\Geo$, $\nabla\Abs$, $\nabla\Sca$ and $\nabla\Geo$ for one combination $(\lambda,\tilde{R},\tilde{d})\in\Lambda\times\mathcal{R}\times\mathcal{D}$. Again, the underlying contours are obtained by discretizing $\Lambda\times\mathcal{R}\times\mathcal{R}$ into $50\times50\times50$ points.}
    \label{fig:ApplicationResults01}
\end{figure}

\begin{figure}
    \centering
    \includegraphics[scale=0.6,clip,trim=70 15 45 0]{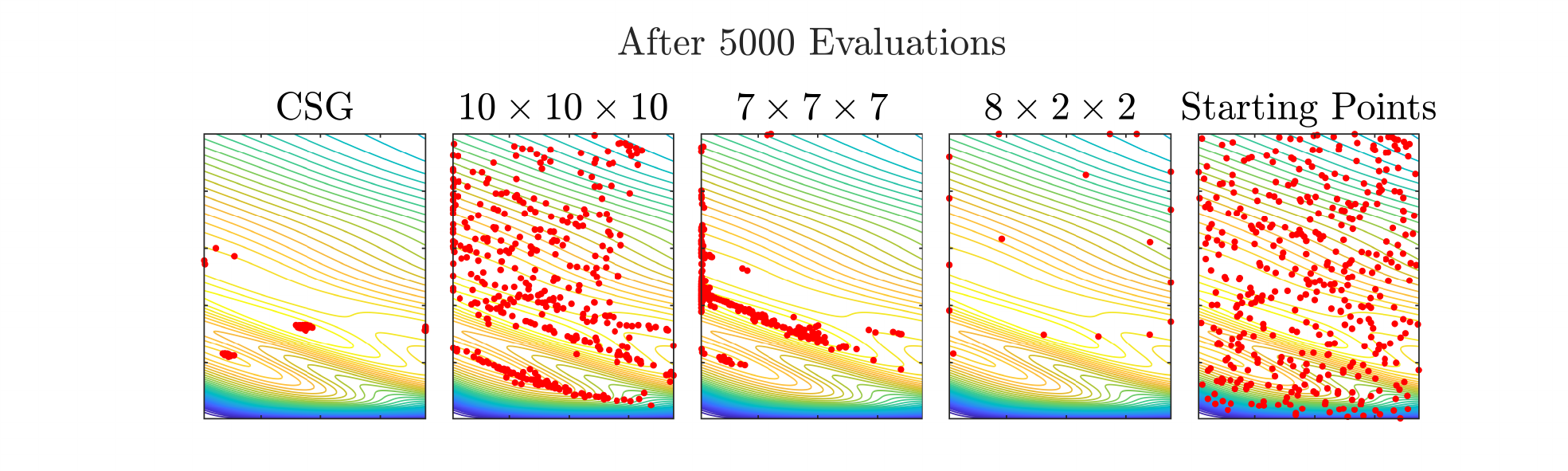}
    
    {\transparent{0.4}{\includegraphics[scale=0.6,clip,trim=70 15 410 0]{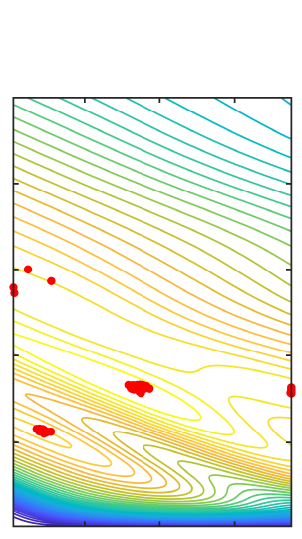}}}
    {\includegraphics[scale=0.6,clip,trim=162 15 45 0]{Figures/RobustPaint/ResultsFigures/Application7.pdf}}
    
    {\transparent{0.4}{\includegraphics[scale=0.6,clip,trim=70 15 410 0]{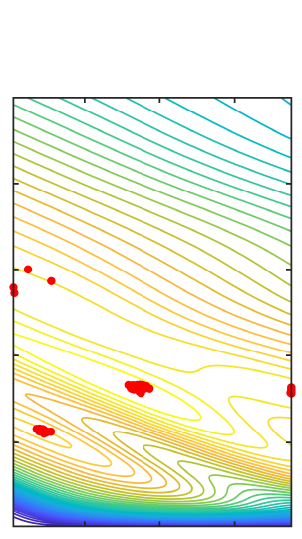}}}
    {\includegraphics[scale=0.6,clip,trim=162 15 45 0]{Figures/RobustPaint/ResultsFigures/Application8.pdf}}
    
    {\transparent{0.4}{\includegraphics[scale=0.6,clip,trim=70 15 410 0]{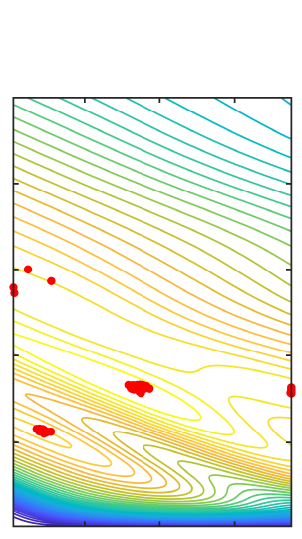}}}
    {\includegraphics[scale=0.6,clip,trim=162 15 45 0]{Figures/RobustPaint/ResultsFigures/Application9.pdf}}
    
    {\transparent{0.4}{\includegraphics[scale=0.6,clip,trim=70 15 410 0]{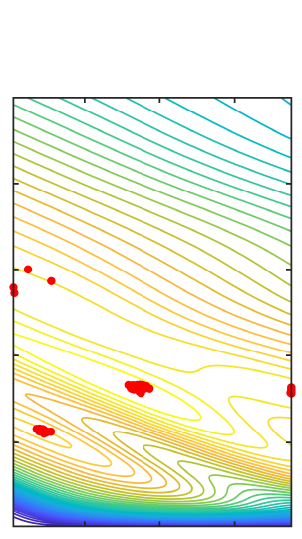}}}
    {\includegraphics[scale=0.6,clip,trim=162 15 45 0]{Figures/RobustPaint/ResultsFigures/Application10.pdf}}
    \caption{Continuation of the results for \eqref{eq:ProblemSetting1} presented in \Cref{fig:ApplicationResults01}. Since CSG was stopped after 5.000 evaluations, the iterates do not change afterwards, but are still shown as a point of reference. In the last row, final designs obtained by $7\times7\times7$ and $8\times2\times2$, which do not correspond to stationary points of \eqref{eq:ProblemSetting1}, are highlighted in blue.}
    \label{fig:ApplicationResults02}
\end{figure}

\subsection{Optimization in the DDA Model}\label{sec:DDAOptim}
As a final example from application, we drop the restriction to core shell particles and consider hematite nanoparticles of arbitrary shape with the DDA model. While the setting is very similar to the setting analyzed above, there are some minor differences.

First, we slightly change the weights appearing in the objective function:
\begin{equation}
    \max_{u\in\U}\quad \tfrac{1}{2}\,\CIEL(u) + \tfrac{1}{2}\,\CIEa(u).\label{eq:ProblemSetting2}
\end{equation}
This change was made purely for aesthetics, as the weights in \eqref{eq:ProblemSetting1} favour radially symmetric solutions, while \eqref{eq:ProblemSetting2} admits local solutions with a more interesting design structure. 

Furthermore, we do not assume a particle design distribution anymore, since it is unclear, how such a general shape distribution should look like. However, as the particles are no longer radially symmetric, we now have to consider the orientation of the particle with respect to the incoming light ray instead. Therefore, the $K$ and $S$ values explained in the introduction of this setting need to be averaged over all possible orientations, i.e.,
\begin{align*}
    K(u,\lambda) &= \frac{1}{\left\vert\mathbb{S}^2\right\vert}\iint_{\mathbb{S}^2}\Abs(u,\lambda,\nu)\mathrm{d}\nu 
\intertext{and}
    S(u,\lambda) &= \frac{1}{\left\vert\mathbb{S}^2\right\vert}\iint_{\mathbb{S}^2} \Sca(u,\lambda,\nu)\big(1-\Geo(u,\lambda,\nu)\big)\mathrm{d}\nu.
\end{align*}
Here, $\mathbb{S}^2$ denotes the unit sphere and the particle orientation $\nu$ is assumed to be distributed uniformly random over all possible directions.

The design domain is a ball of 300nm diameter, discretized into $n_0=65752$ dipole cells. The design $u\in[0,1]^{n_0}$ gives the relative amount of hematite to water in each cell. The optical properties of intermediate (grey) material $u^{(i)}\in(0,1)$ are generated by linear interpolation between the respective properties of water and hematite.

Generally, one would combine filtering techniques and greyness penalization to obtain a smooth final design without intermediate material (see, e.g., \cite{sigmund2007morphology}). However, we explicitly refrain from doing so to present a clear analysis of the CSG performance, without interference from secondary layers of smoothing techniques.

As mentioned above, the change to the DDA model significantly increases the computational cost of evaluating $\Sca$, $\Abs$ and $\Geo$ for a given $(u,\lambda,\nu)\in\U\times\Lambda\times\mathbb{S}^2$. Thus, the deterministic approaches used in the previous setting are no longer computationally feasible.

Furthermore, we want to use this example to analyze the impact of the chosen norm on $\U\times\Lambda\times\mathbb{S}^2$, appearing in the nearest neighbor calculation, which was already mentioned in \cite[Section 3.5]{CSGPart1}. To be precise, calculating the CSG integration weights requires the definition of an outer norm 
\begin{equation*}
    \big\Vert(u^\ast,\lambda^\ast,\nu^\ast)\big\Vert_{\text{Out}} = c_u\Vert u^\ast \Vert_\sU + c_\lambda\Vert \lambda^\ast\Vert_{_\Lambda} + c_\nu\Vert \nu^\ast\Vert_{_{\mathbb{S}^2}},
\end{equation*}
where $\Vert\cdot\Vert_\sU$, $\Vert\cdot\Vert_{_\Lambda}$ and $\Vert\cdot\Vert_{_{\mathbb{S}^2}}$ denote norms on the corresponding inner spaces and $c_u,c_\lambda,c_\nu>0$. In this application, we choose the euclidean norm $\Vert\cdot\Vert_{_2}$ for each inner space. Additionally, we fix $c_u = 1$, but consider different coefficients $c_\lambda$ and $c_\nu$. 

\begin{figure}
    \centering
    \includegraphics[scale=0.07]{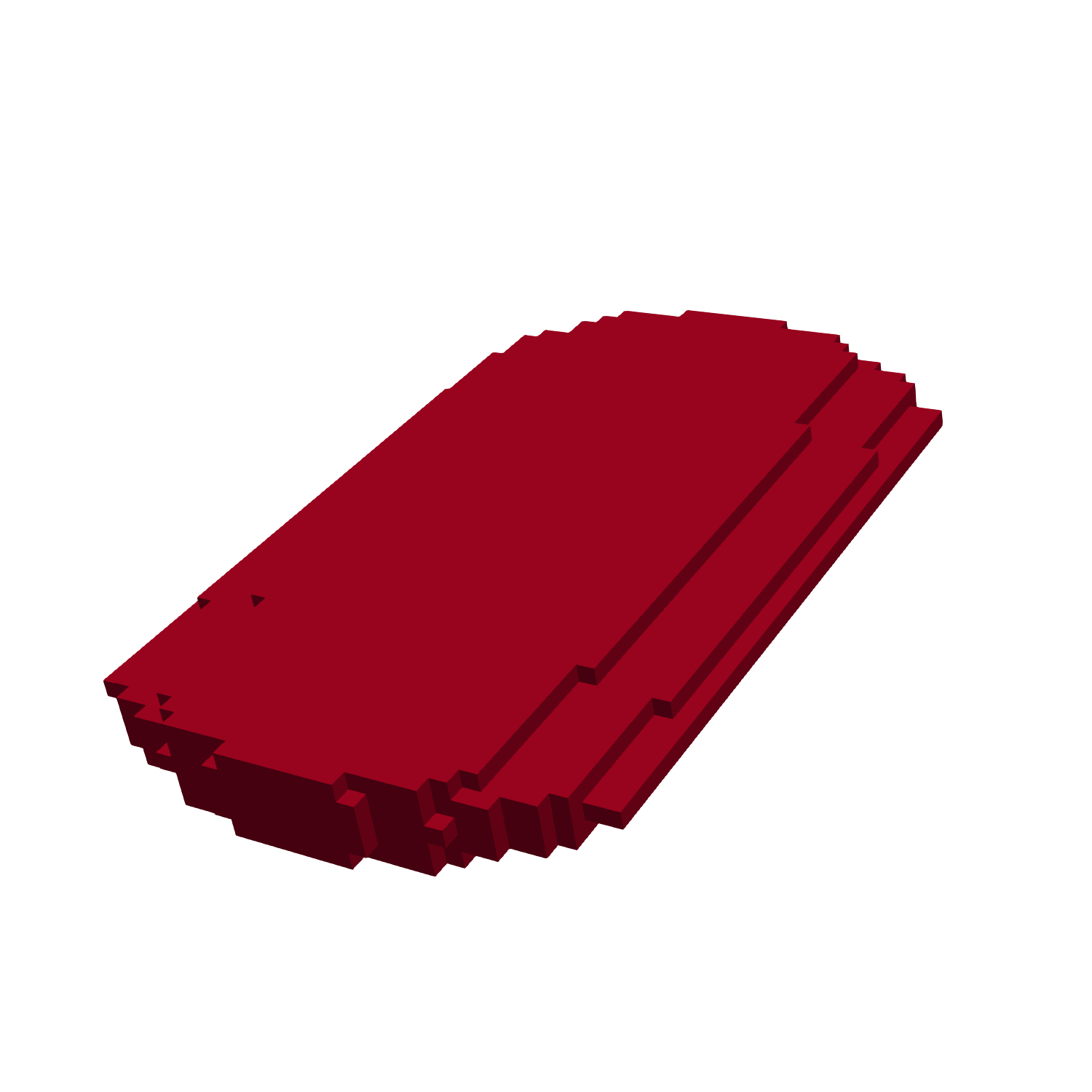}
    \includegraphics[scale=0.07]{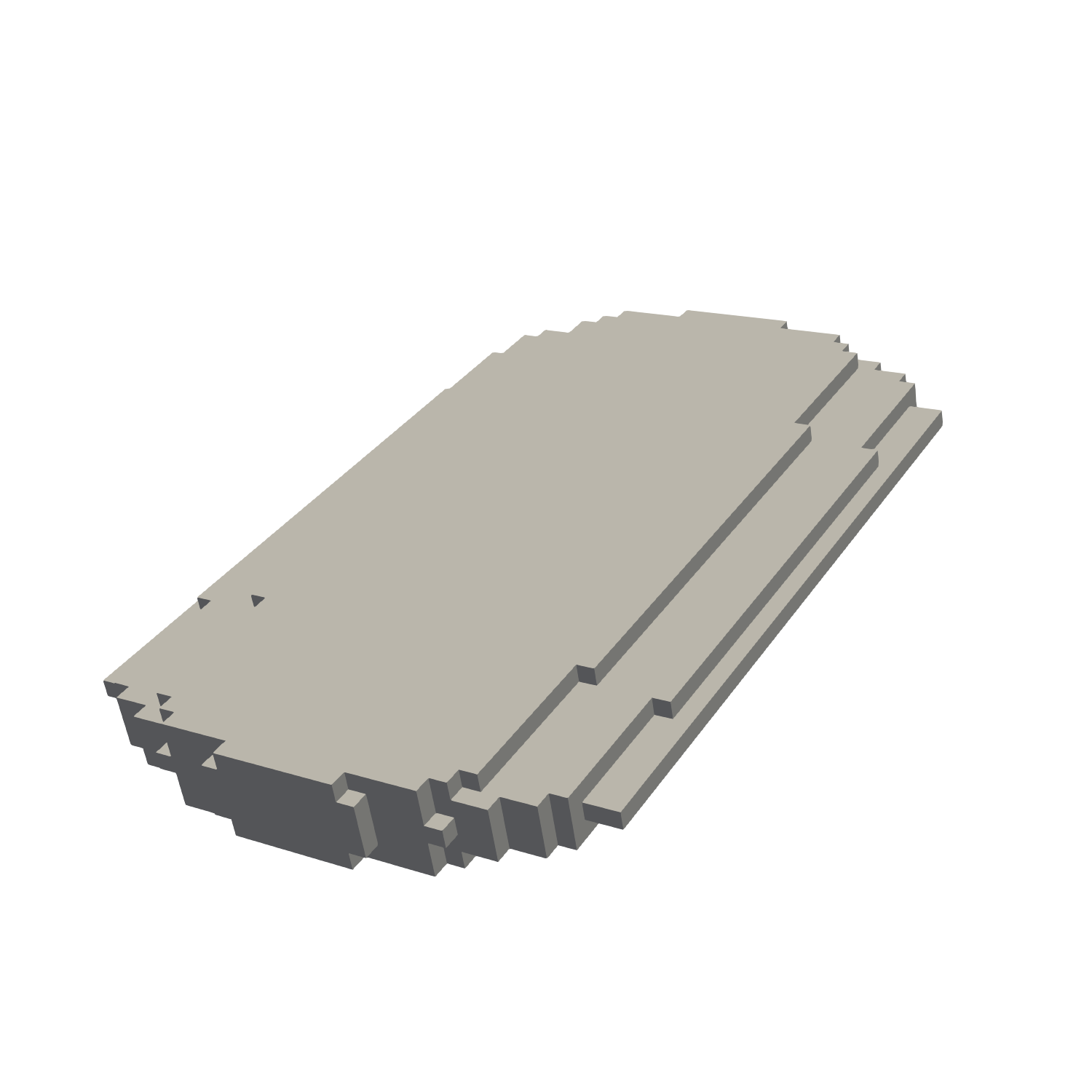}
    \includegraphics[scale=0.07]{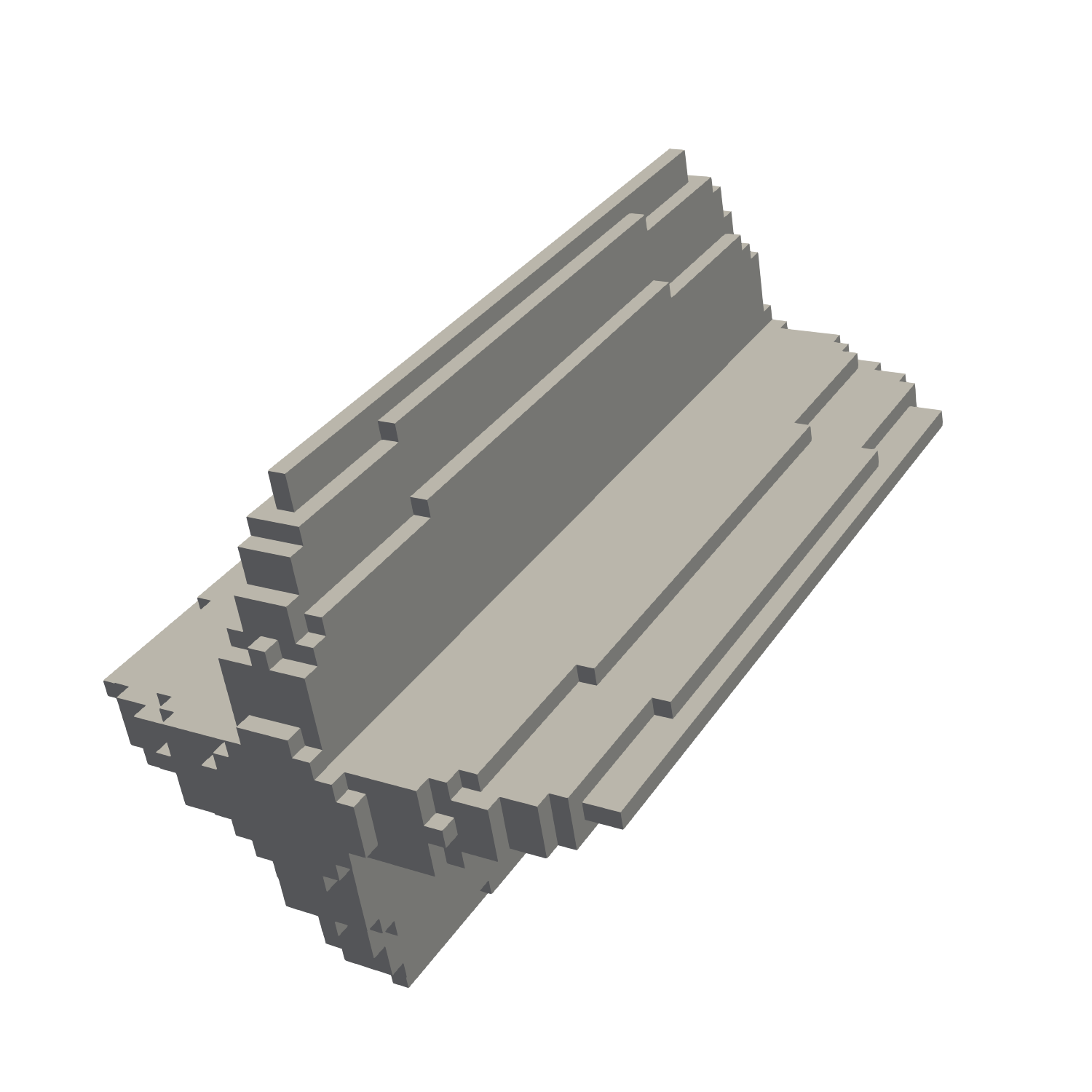}

    \includegraphics[scale=0.07]{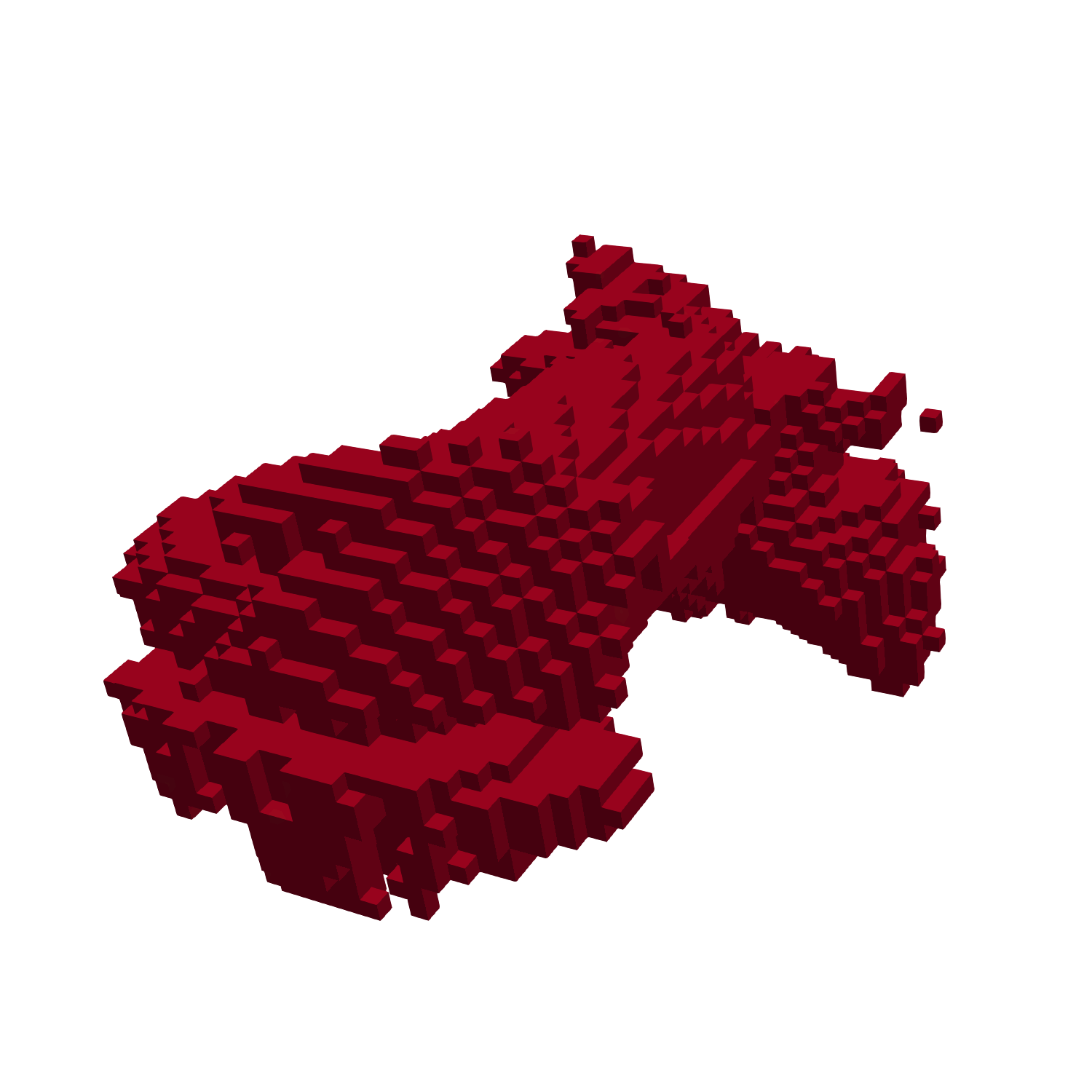}
    \includegraphics[scale=0.07]{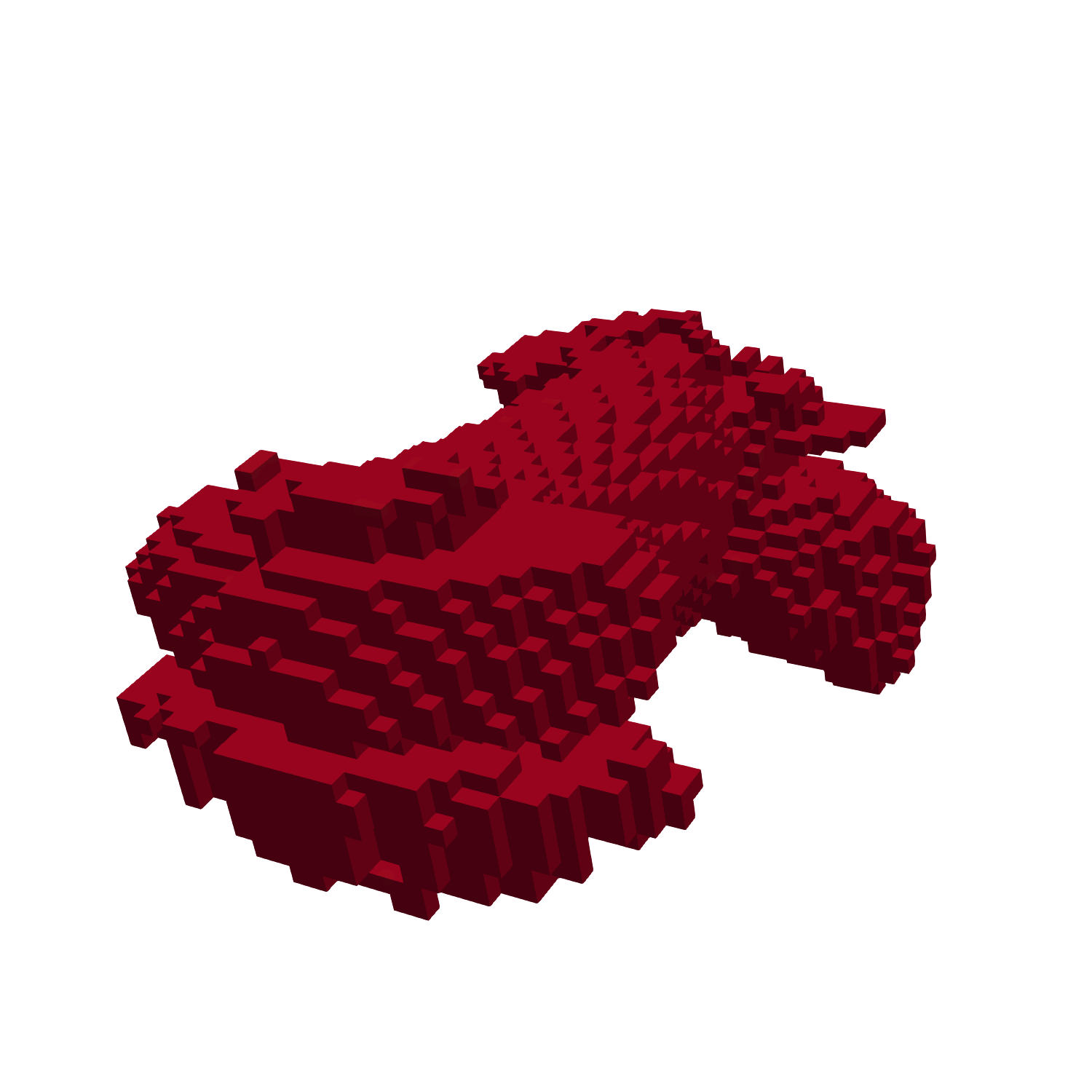}
    \includegraphics[scale=0.07]{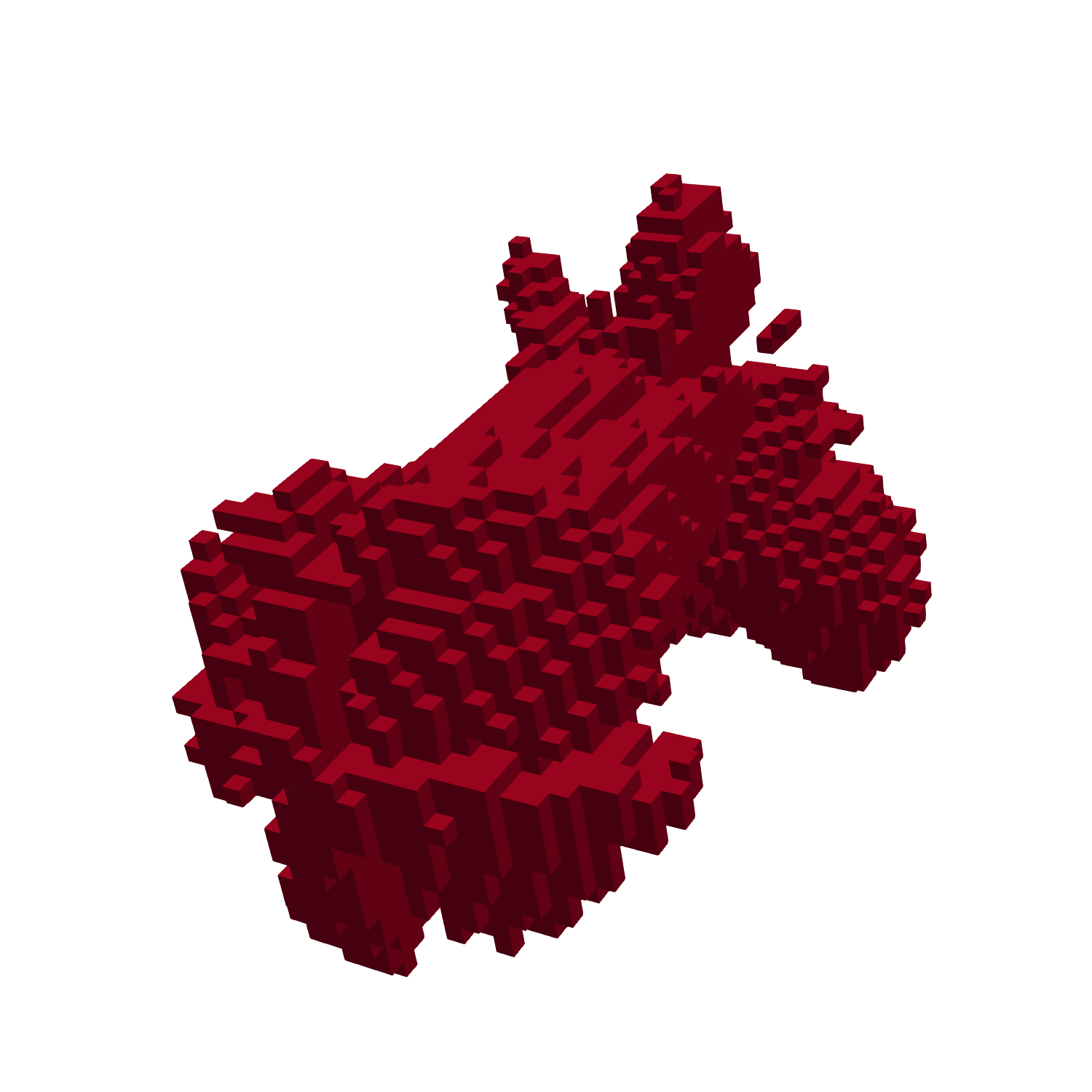}
    
    \caption{Representation of the initial designs (top row). Red boxes correspond to cells consisting purely of hematite, while grey boxes indicate an artificial intermediate material, consisting of 50\% hematite and 50\% water. For later references, we denote the initial designs by \textit{plate (100\%)}, \textit{plate (50\%)} and \textit{screwdriver (50\%)}, respectively.\\ The different final designs, obtained by 5.000 iterations of SCIBL-CSG with outer norm (a) are shown in the bottom row. For better visibility, cells with less than $50\%$ hematite are considered as pure water and left out of the visualization. For each final design, the amount of cells discarded in this fashion is less than 100 (less than $0.15\%$ of all cells).}
    \label{fig:DDA_initial_final_desgin}
\end{figure}

For the optimization, we consider three different initial designs, which are shown in \Cref{fig:DDA_initial_final_desgin}, top row. The objective function value as well as the values of $\CIEL$, $\CIEa$ and $\CIEb$ for these designs were computed using the CSG method with fixed design, i.e., with constant step size $\tau=0$, and verified by Monte Carlo (see, e.g., \cite{caflisch1998monte}) integration. For one of the initial designs, the objective function value approximation of CSG and Monte Carlo integration with respect to the number of evaluations and different choices of $\Vert\cdot\Vert_{_{\text{Out}}}$ is shown in \Cref{fig:DDAInitComp}. 
\begin{figure}
  \begin{minipage}[c]{0.5\textwidth}
    \includegraphics[width=\textwidth]{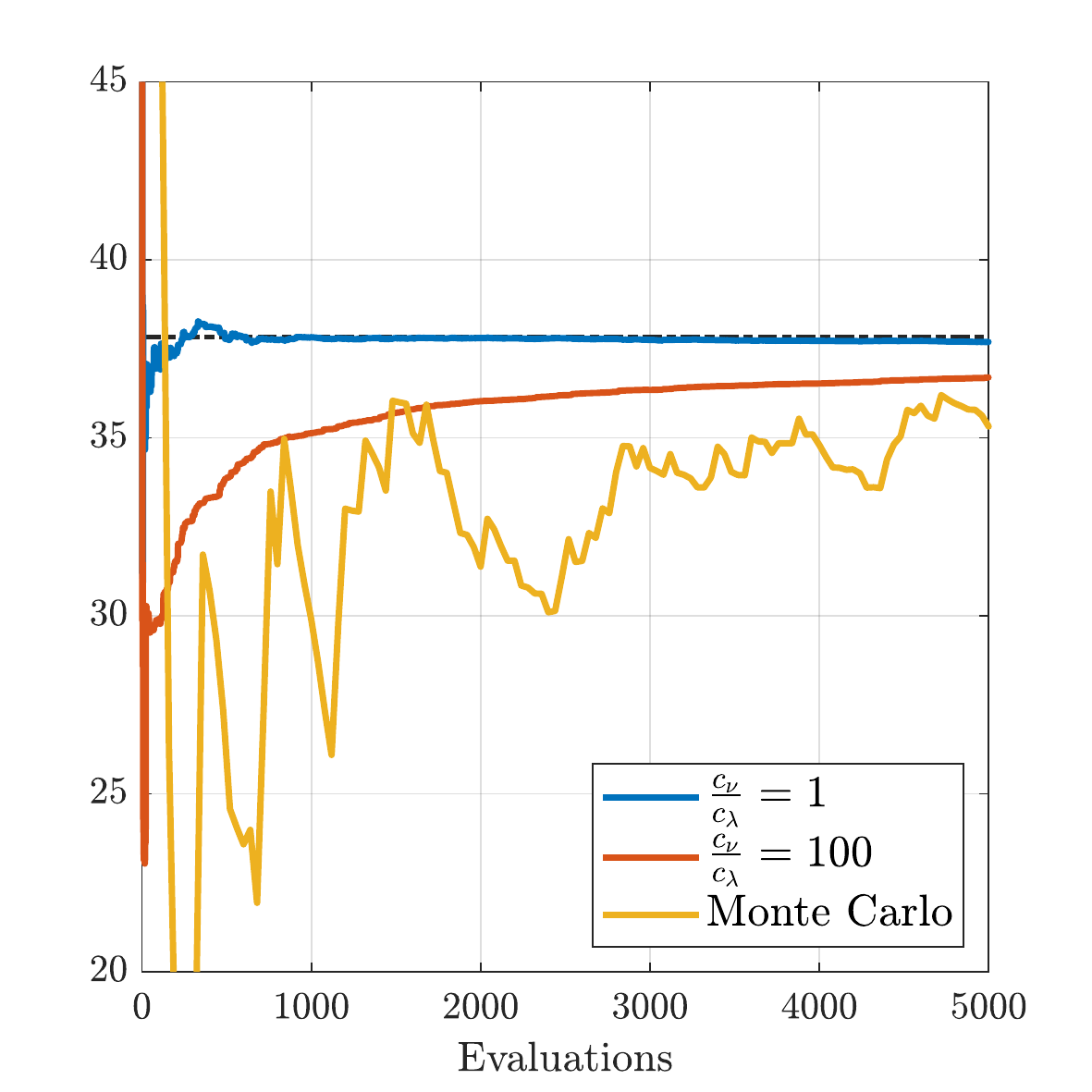}
  \end{minipage}\hfill
  \begin{minipage}[c]{0.45\textwidth}
    \caption{Objective function approximation for the \textit{screwdriver (50\%)} design. The blue and orange curve show the results for CSG with fixed step size ${\tau = 0}$ and different coefficients of the outer norm $\Vert\cdot\Vert_{_{\text{Out}}}$. For Monte Carlo, each inner integral over $\mathbb{S}^2$ was approximated using 40 random directions. The true objective function value ${J^\ast\approx 37.84}$ is indicated by the dashed line. The Monte Carlo results are truncated for the sake of readability, as it requires over 8.000 evaluations to reach a good approximation to $J^\ast$.}
    \label{fig:DDAInitComp}
  \end{minipage}
\end{figure}

Each design was optimized with SCIBL-CSG, using inexact hybrid weights for the integration over $\mathbb{S}^2$ and exact hybrid weights for the integration over $\Lambda$. For $\Vert\cdot\Vert_{_{\text{Out}}}$, we considered four different choices of the parameters:
\begin{enumerate}
    \item[(a)] $c_u = 1$, $c_\lambda=100$ and $c_\nu = 100$
    \item[(b)] $c_u = 1$, $c_\lambda=1$ and $c_\nu = 1$
    \item[(c)] $c_u = 1$, $c_\lambda=\tfrac{1}{100}$ and $c_\nu = 1$
    \item[(d)] $c_u = 1$, $c_\lambda=\tfrac{1}{100}$ and $c_\nu=\tfrac{1}{100}$
\end{enumerate}
The results in case (a) for all three initial designs are presented in \Cref{fig:DDAOptimAllDesigns} and the respective design evolution for the initial design \textit{screwdriver (50\%)}, shown in \Cref{fig:DDA_initial_final_desgin} top row, is depicted in \Cref{fig:DDA_designs_ScHa}. The corresponding final designs, obtained after 5.000 SCIBL-CSG iterations, are presented in \Cref{fig:DDA_initial_final_desgin}, bottom row.
As a second measure for convergence in the design space, the evolution of the norm distance to the respective final designs are shown in \Cref{fig:DesingChangeNorm} for all three initial designs.

Comparing \Cref{fig:DDAInitComp} and \Cref{fig:DDAOptimAllDesigns}, we notice that CSG, using an appropriate outer norm, finds an optimized design almost as fast as it computes the objective function value for a given design. In other words: The full optimization process is only slightly more expensive that the simple evaluation of a single design. Moreover, CSG finds an optimal solution to \eqref{eq:ProblemSetting2} long before the Monte Carlo approximation to the initial objective function value is converged.
\begin{figure}
  \begin{minipage}[c]{0.5\textwidth}
    \includegraphics[width=\textwidth]{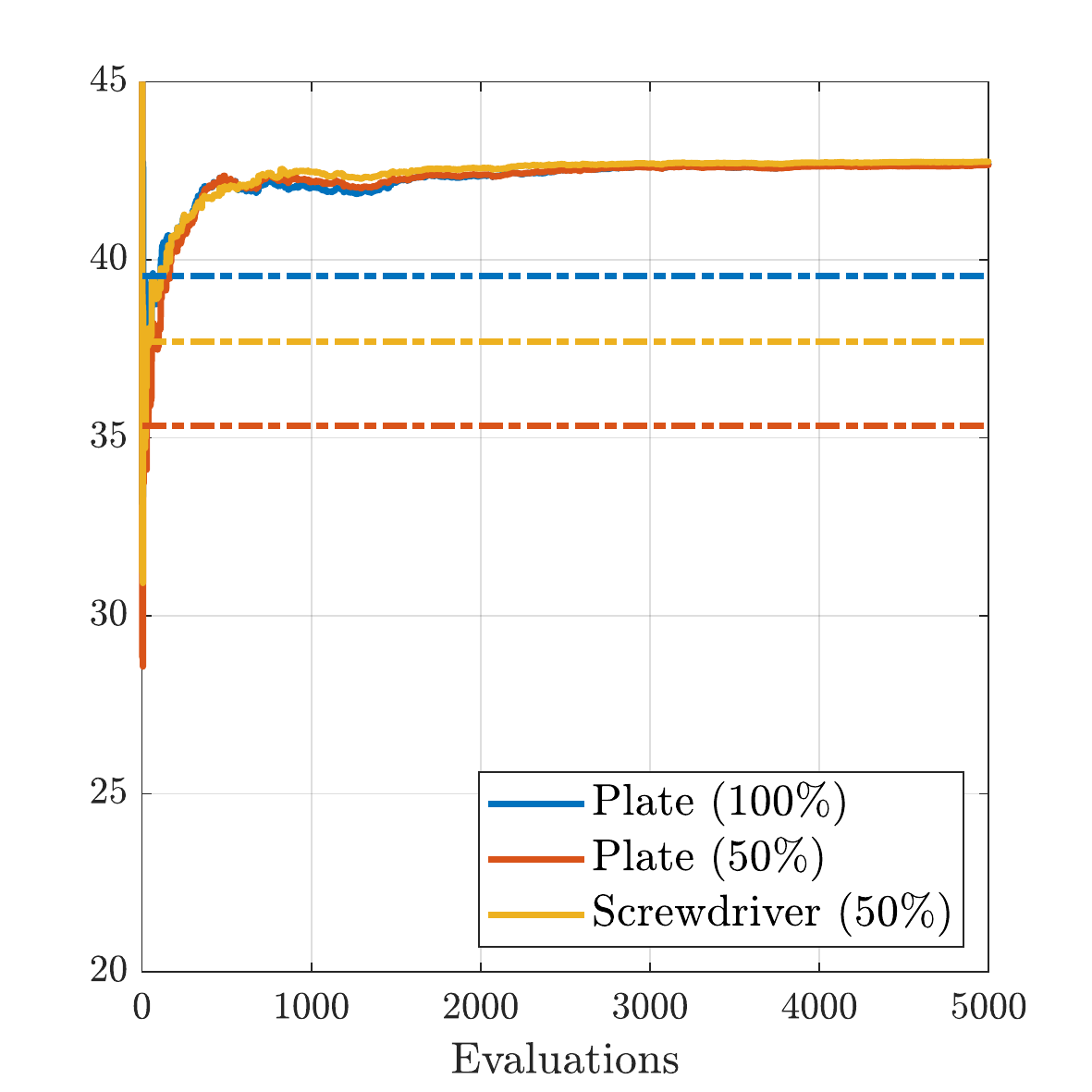}
  \end{minipage}\hfill
  \begin{minipage}[c]{0.45\textwidth}
    \caption{CSG objective function approximations during the optimization process for all initial designs and choice (a) for $\Vert\cdot\Vert_{_{\text{Out}}}$, i.e., $c_u=1$, $c_\lambda=100$ and $c_\nu=100$. The dashed lines indicate the objective function values of each initial design, respectively.}
    \label{fig:DDAOptimAllDesigns}
  \end{minipage}
\end{figure}

It should, of course, also be noted, that choosing $\Vert\cdot\Vert_{_\text{Out}}$ should be done with caution, as \Cref{fig:DDAPlateFull3} shows. While case (a) is, to the best of our knowledge, \textit{not} optimal by any means, cases (b) and (c) clearly show worse results. Choosing $\Vert\cdot\Vert_{_\text{Out}}$ extremely poorly, i.e., case (d), can even have devastating effects on the performance, see \Cref{fig:DDAPlateFull4}.

This, however, could also imply that the performance might be significantly improved, if problem specific inner and outer norms would be chosen. Especially in even more complex settings, techniques to obtain such norms a priori, or even during the optimization process itself, represent one of the most important points for further research.

\begin{figure}
    \centering
\includegraphics[scale=0.042]{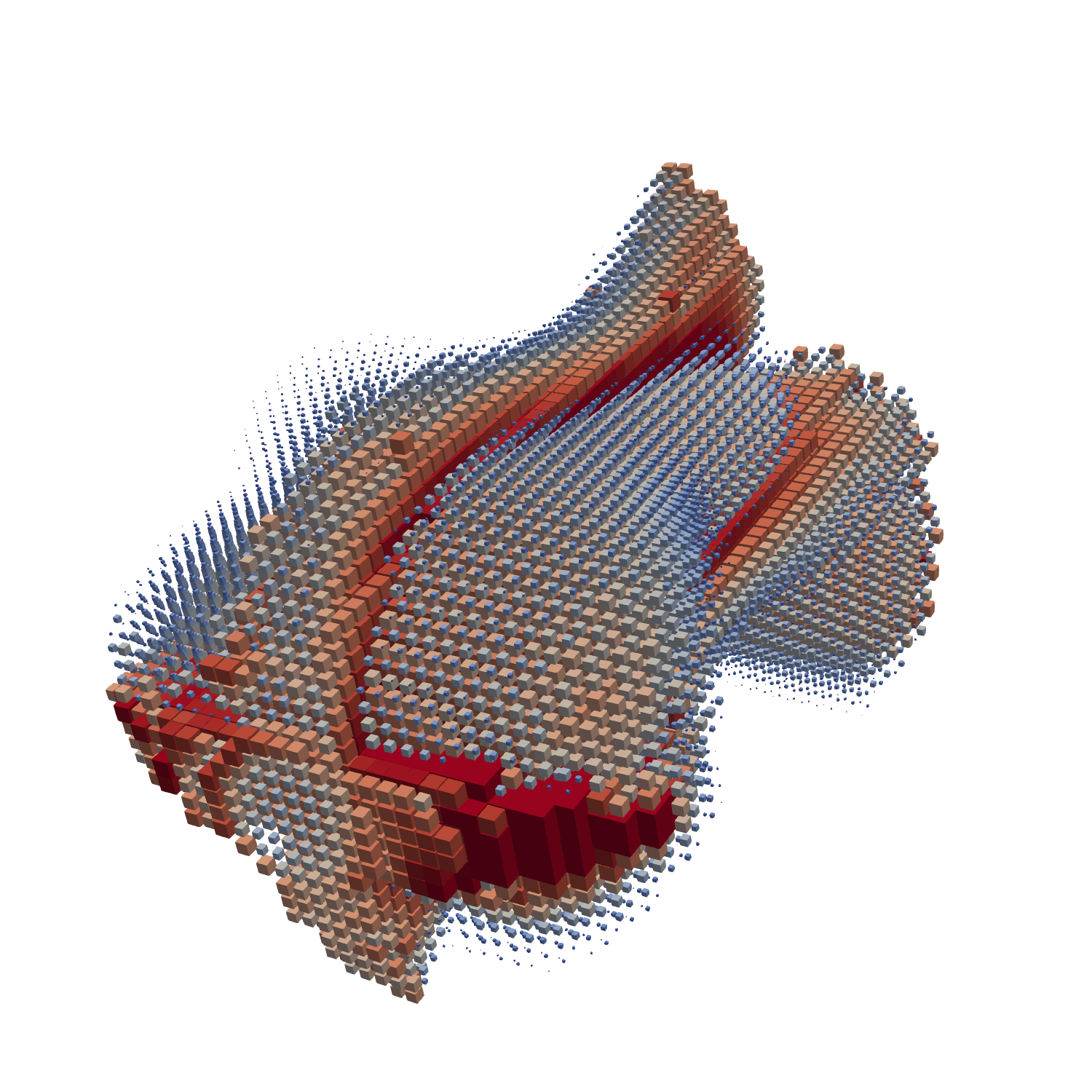}
\includegraphics[scale=0.042]{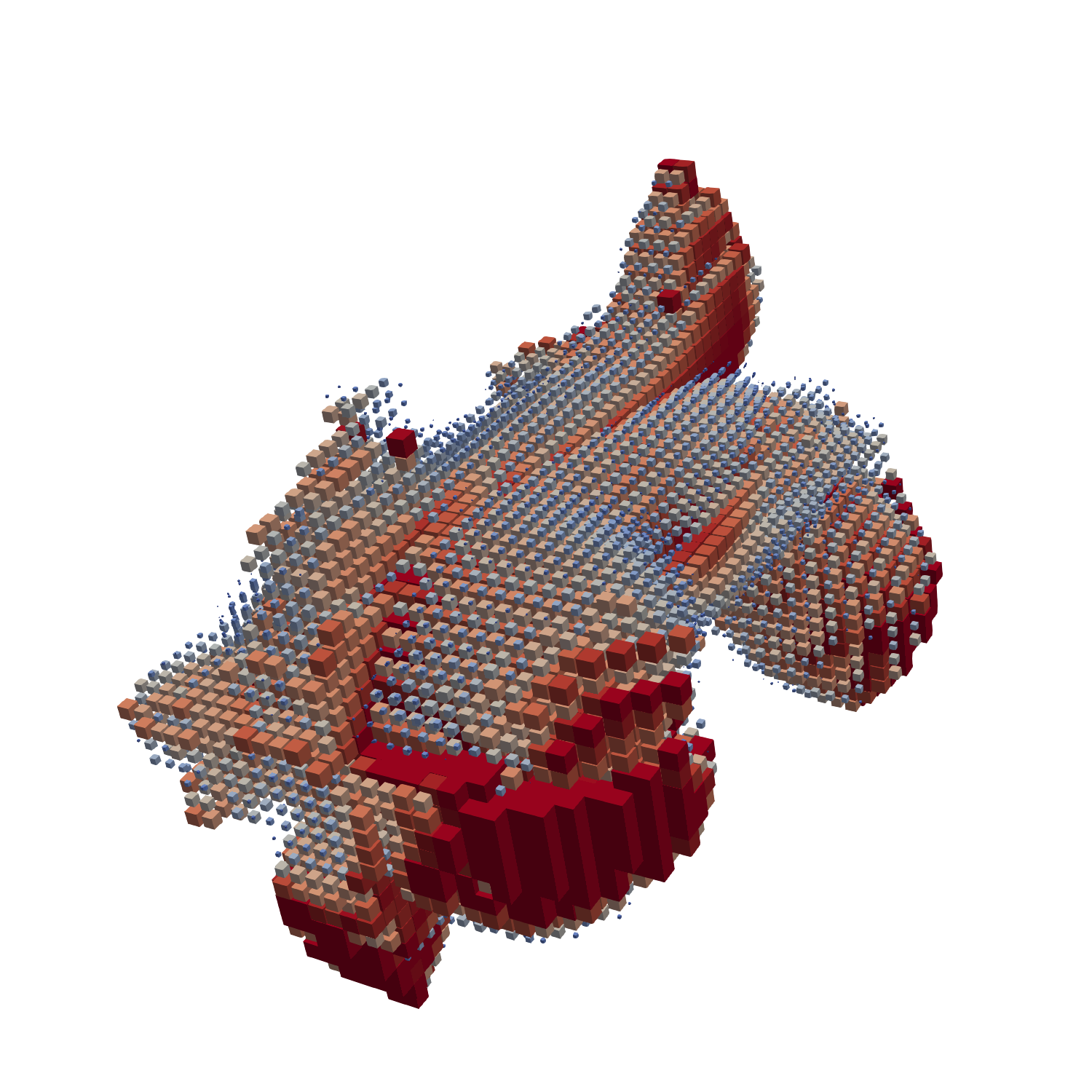}
\includegraphics[scale=0.042]{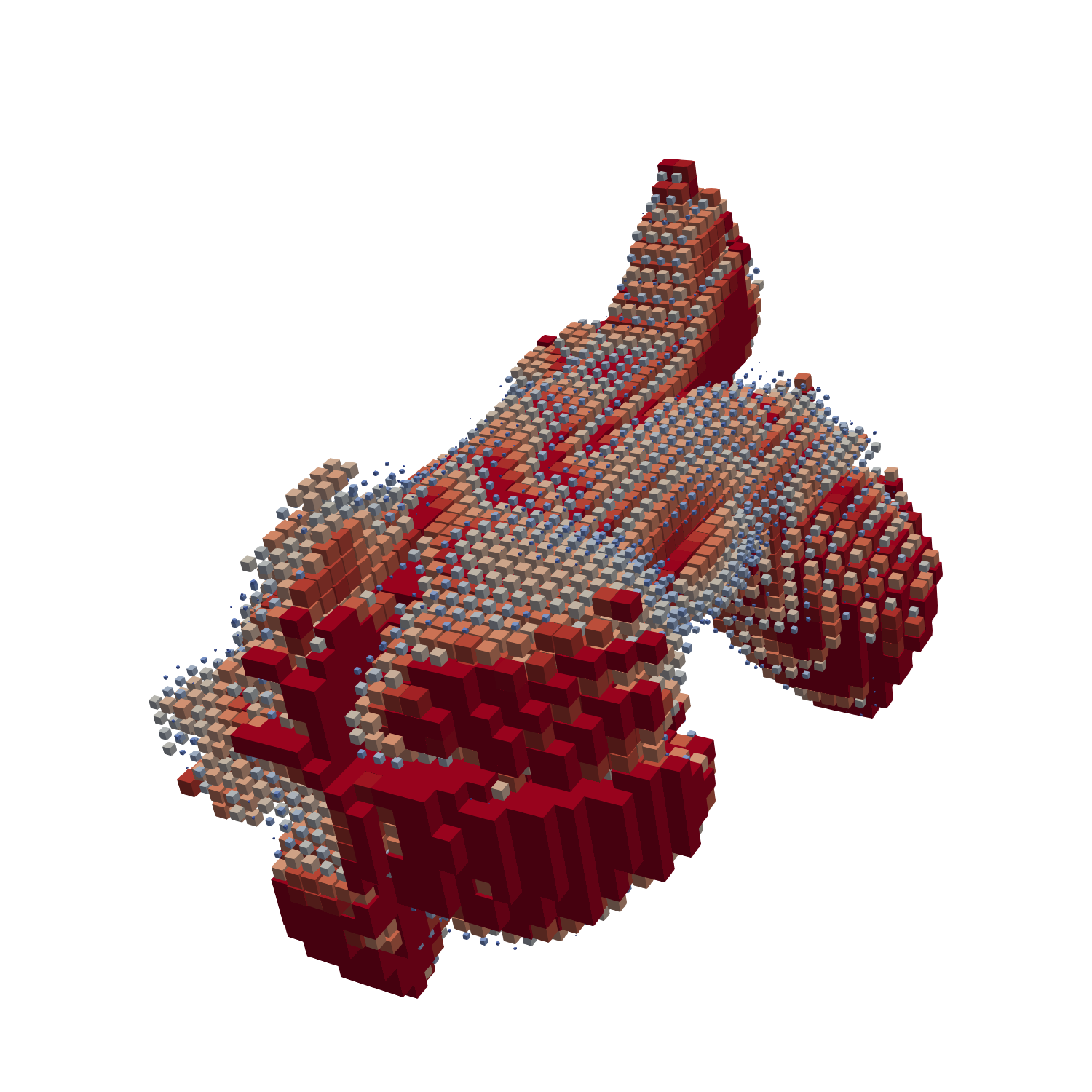}
\includegraphics[scale=0.042]{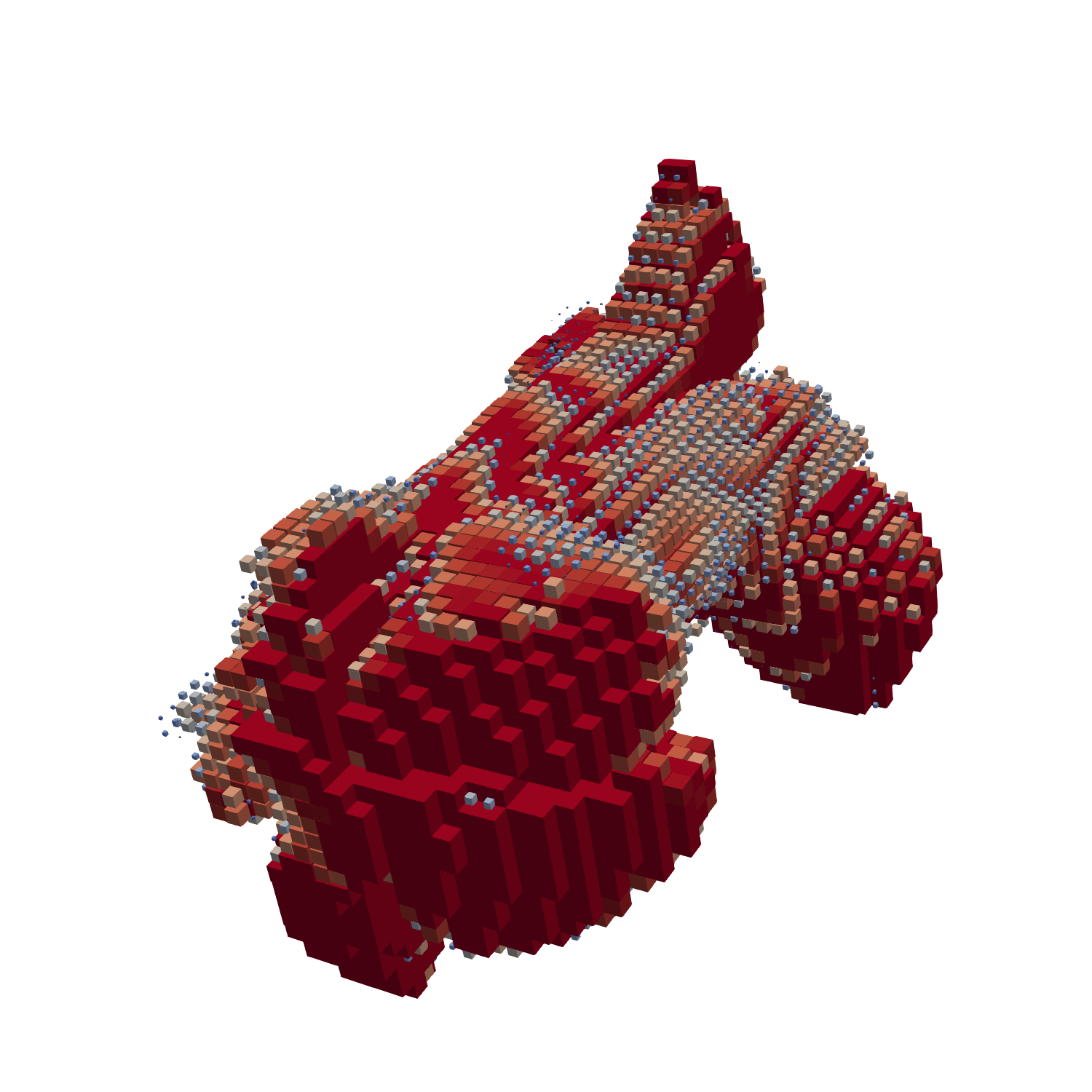}
\includegraphics[scale=0.042]{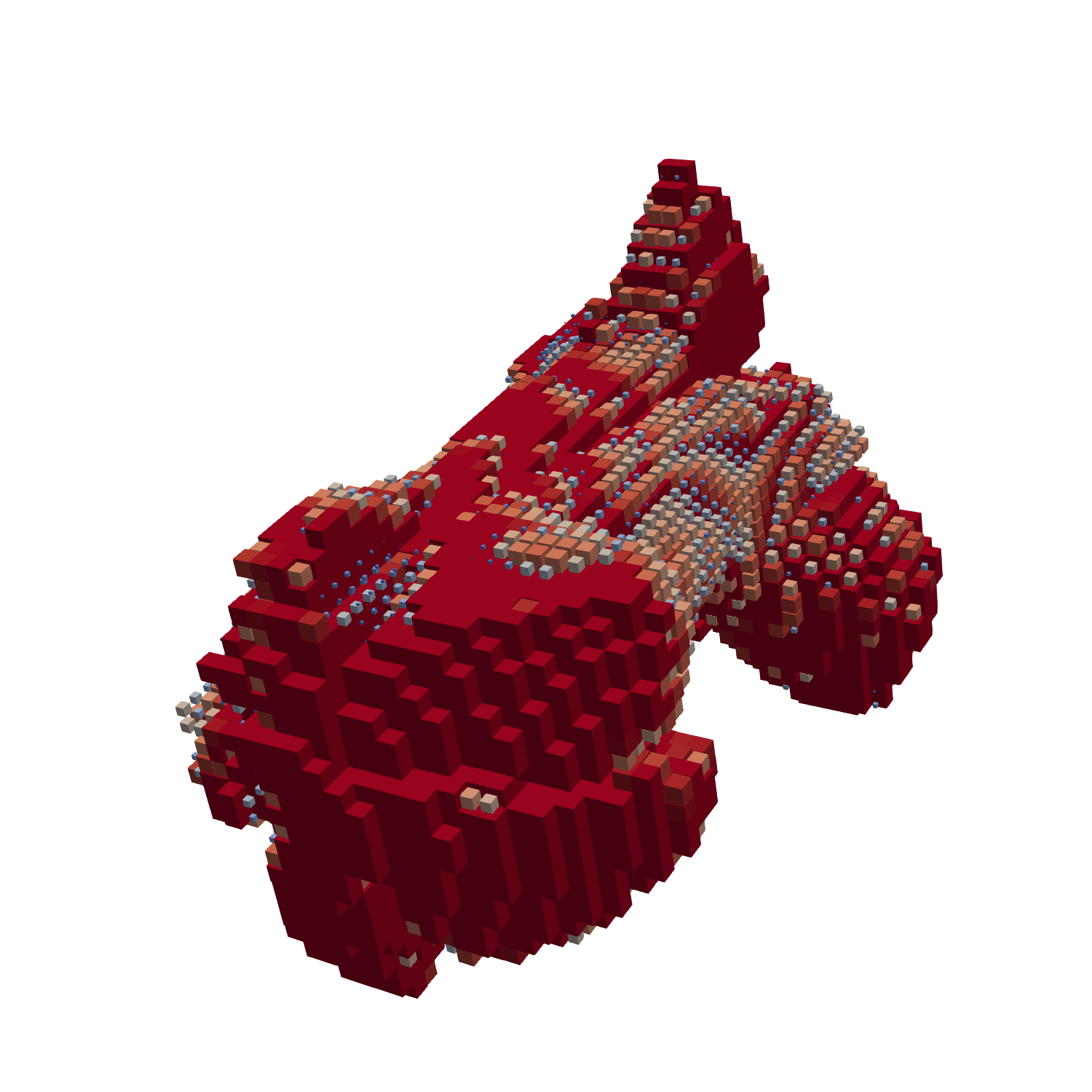}
                 
\includegraphics[scale=0.042]{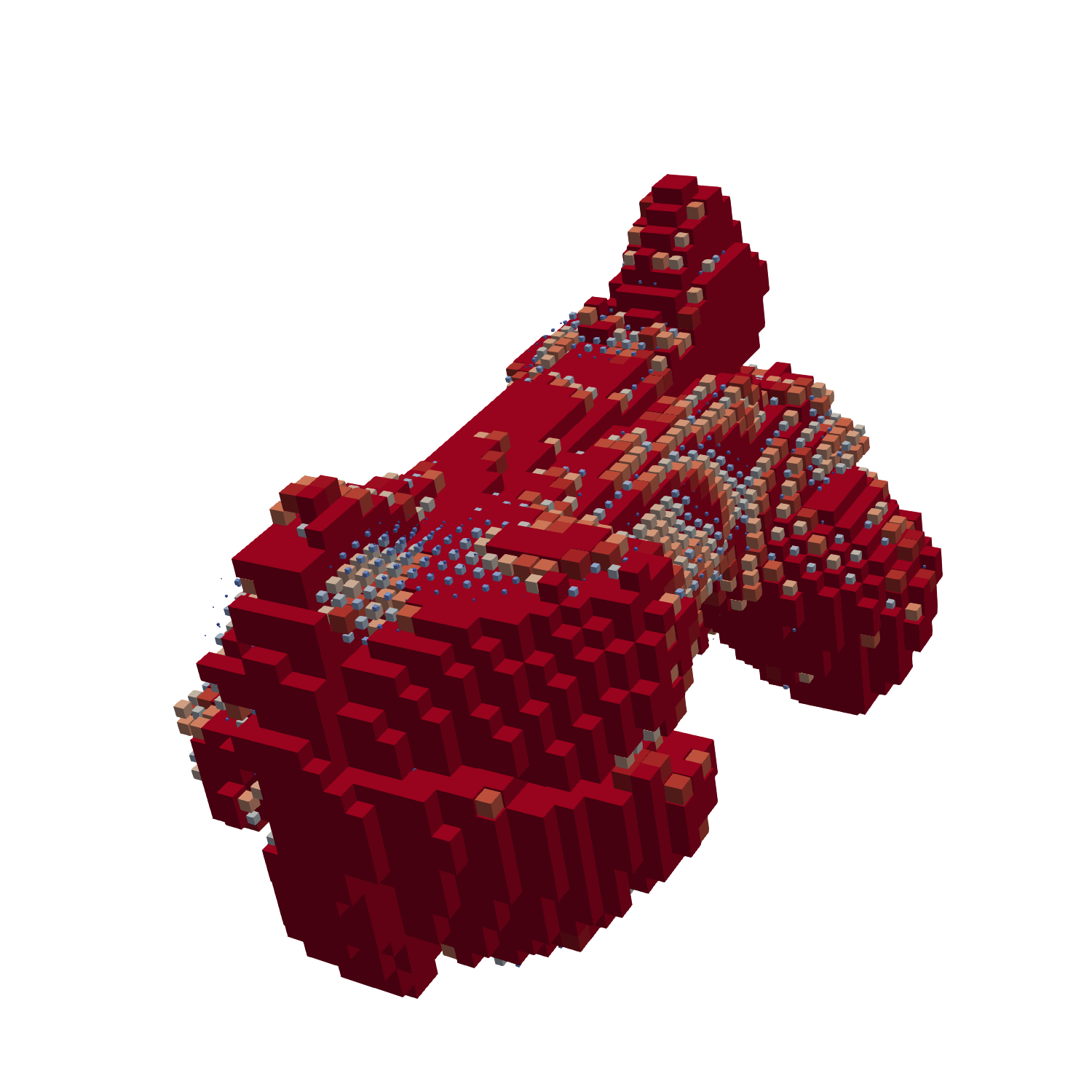}
\includegraphics[scale=0.042]{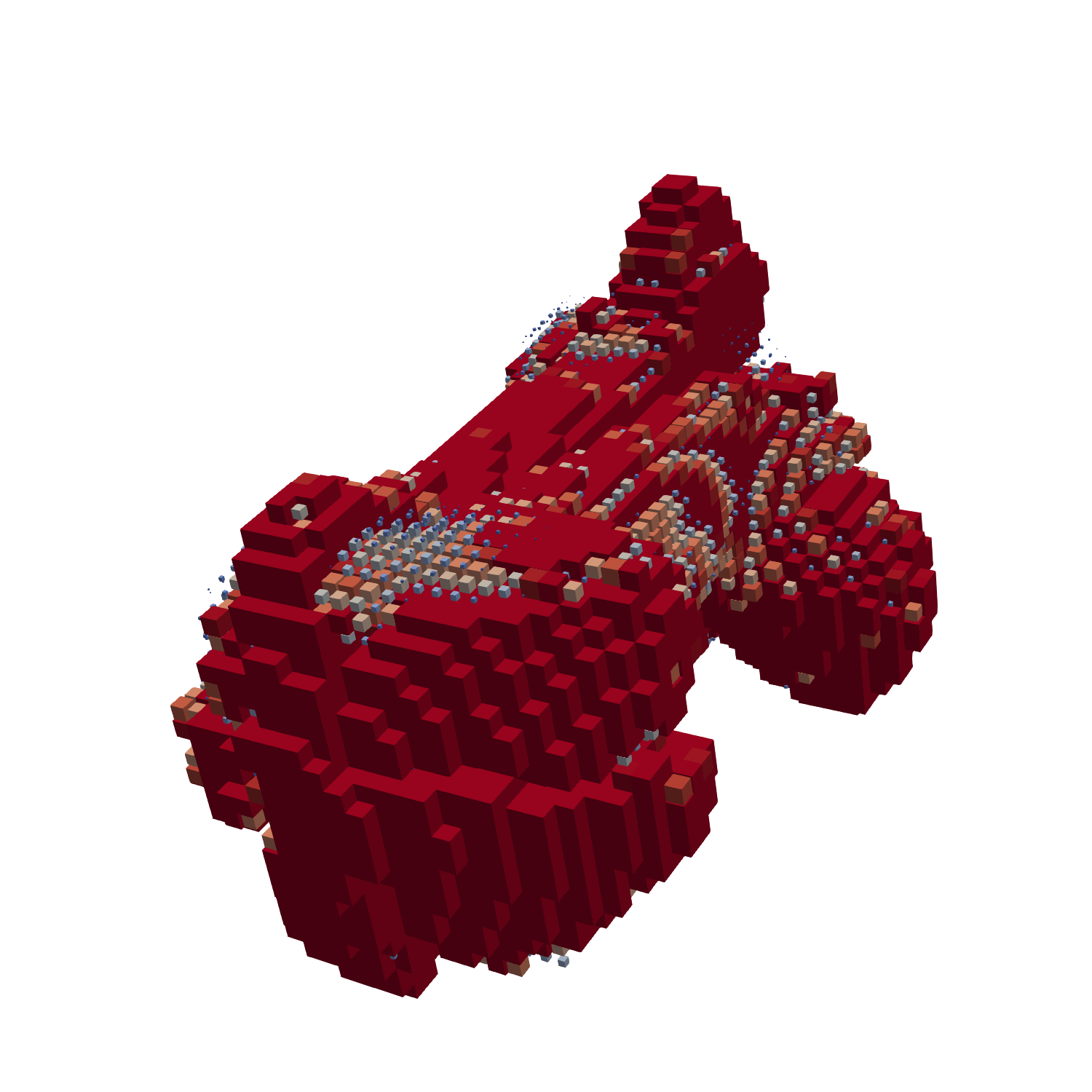}
\includegraphics[scale=0.042]{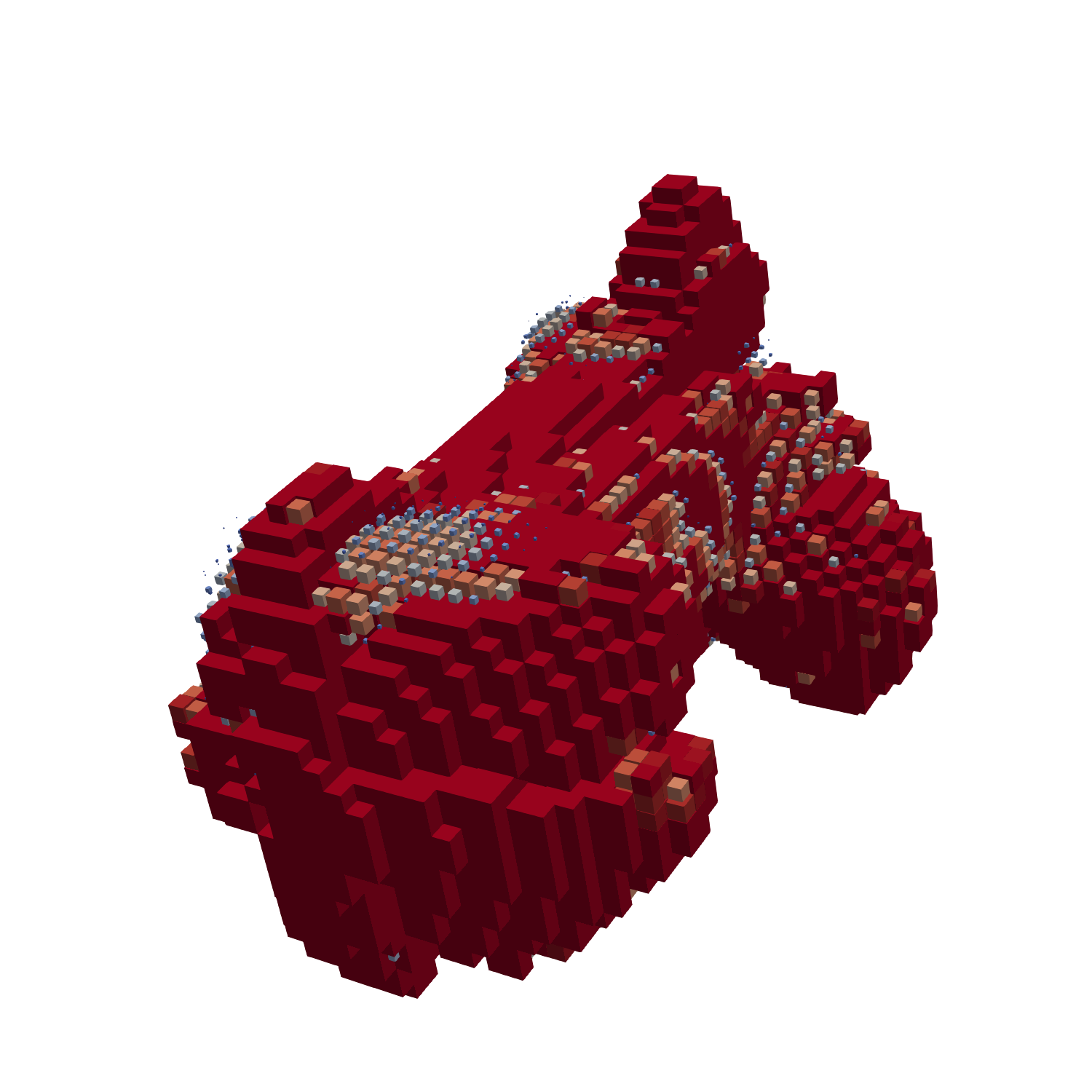}
\includegraphics[scale=0.042]{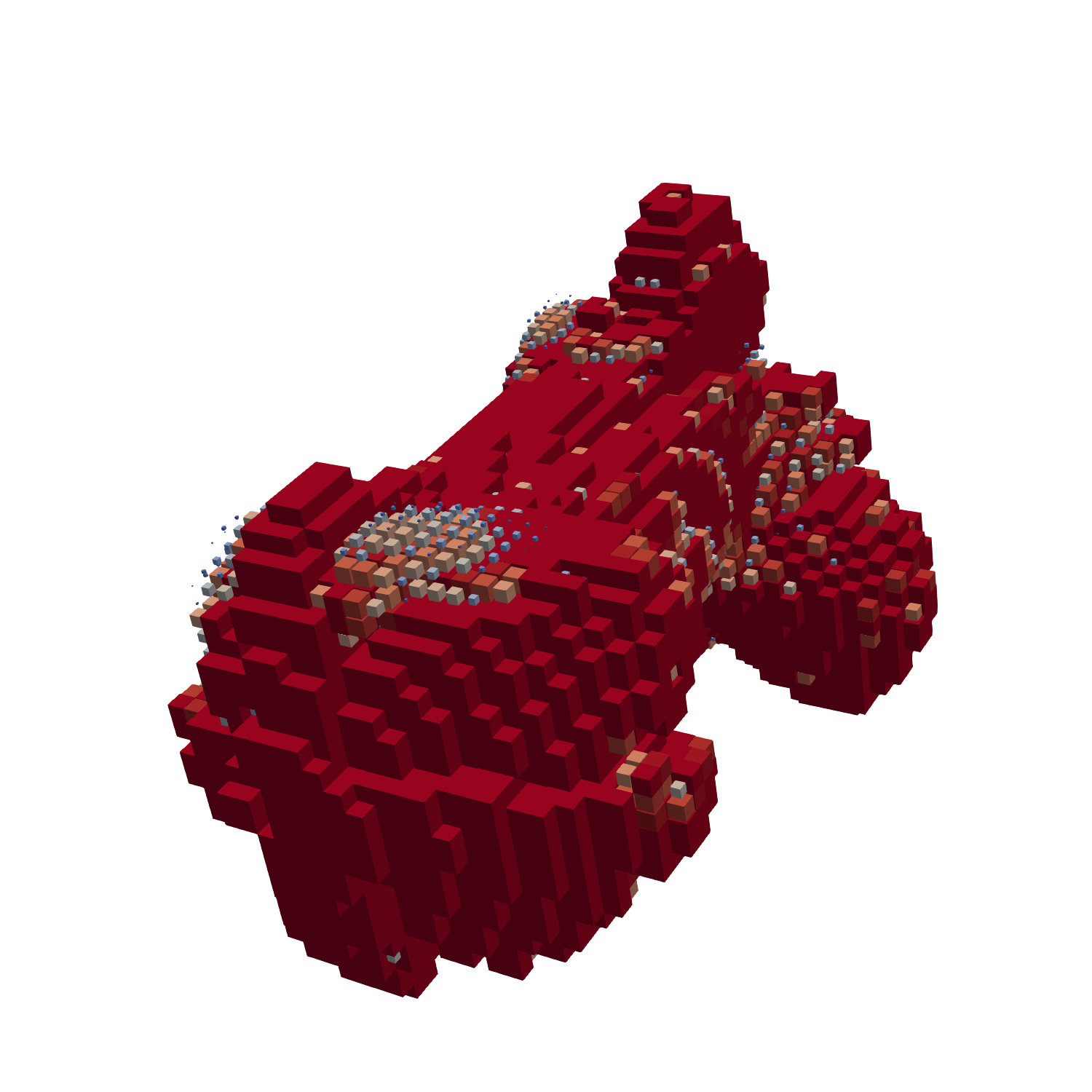}
\includegraphics[scale=0.042]{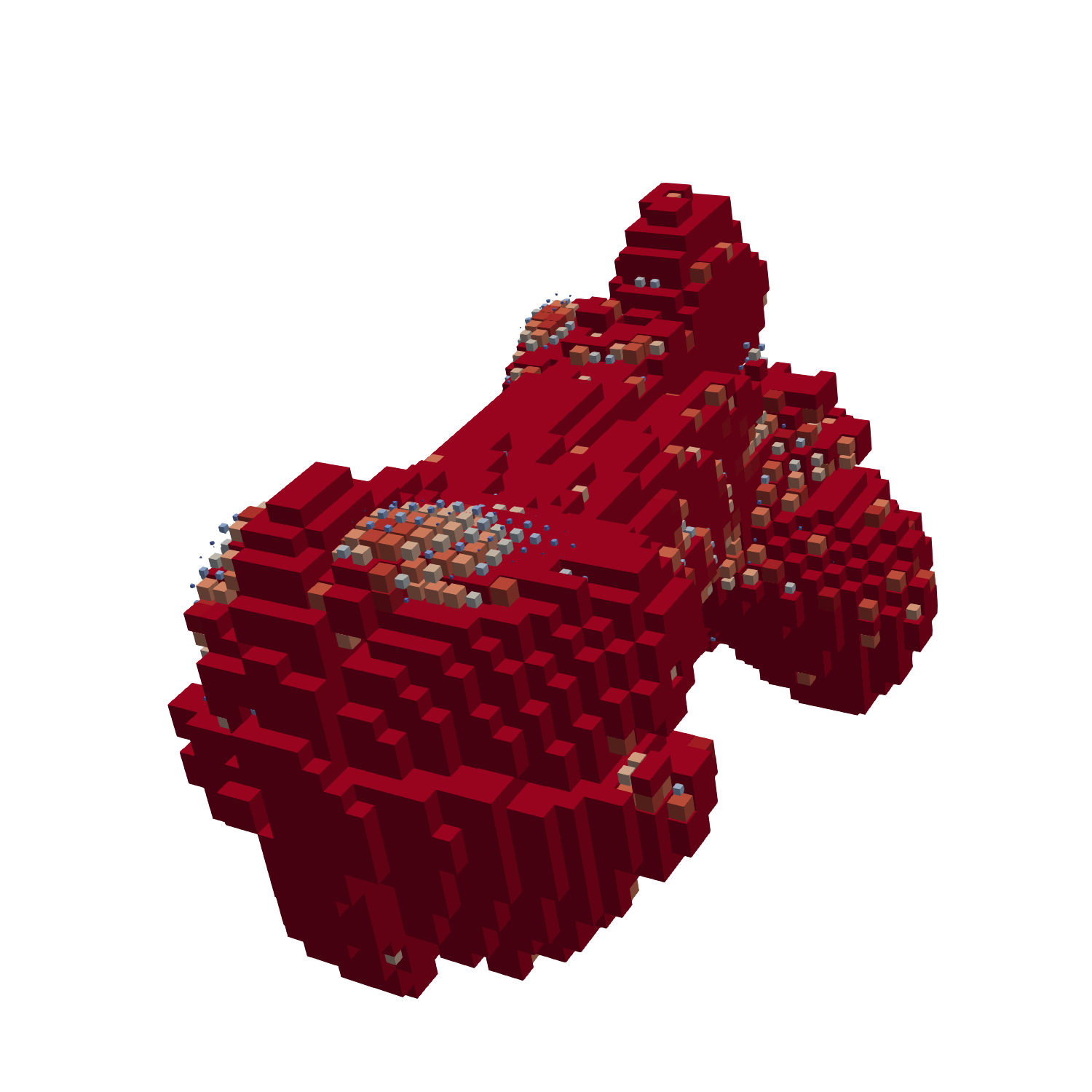}

\includegraphics[scale=0.042]{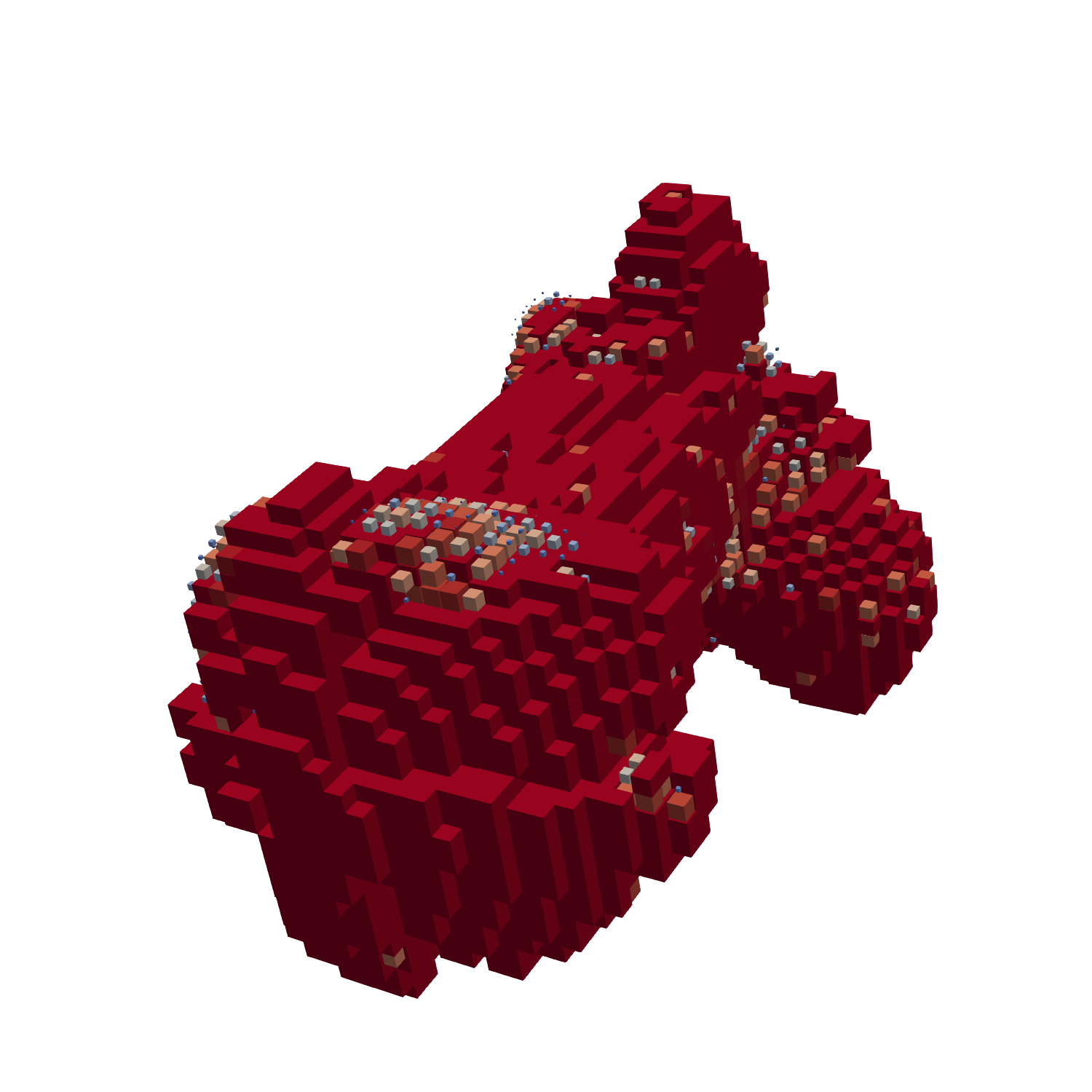}
\includegraphics[scale=0.042]{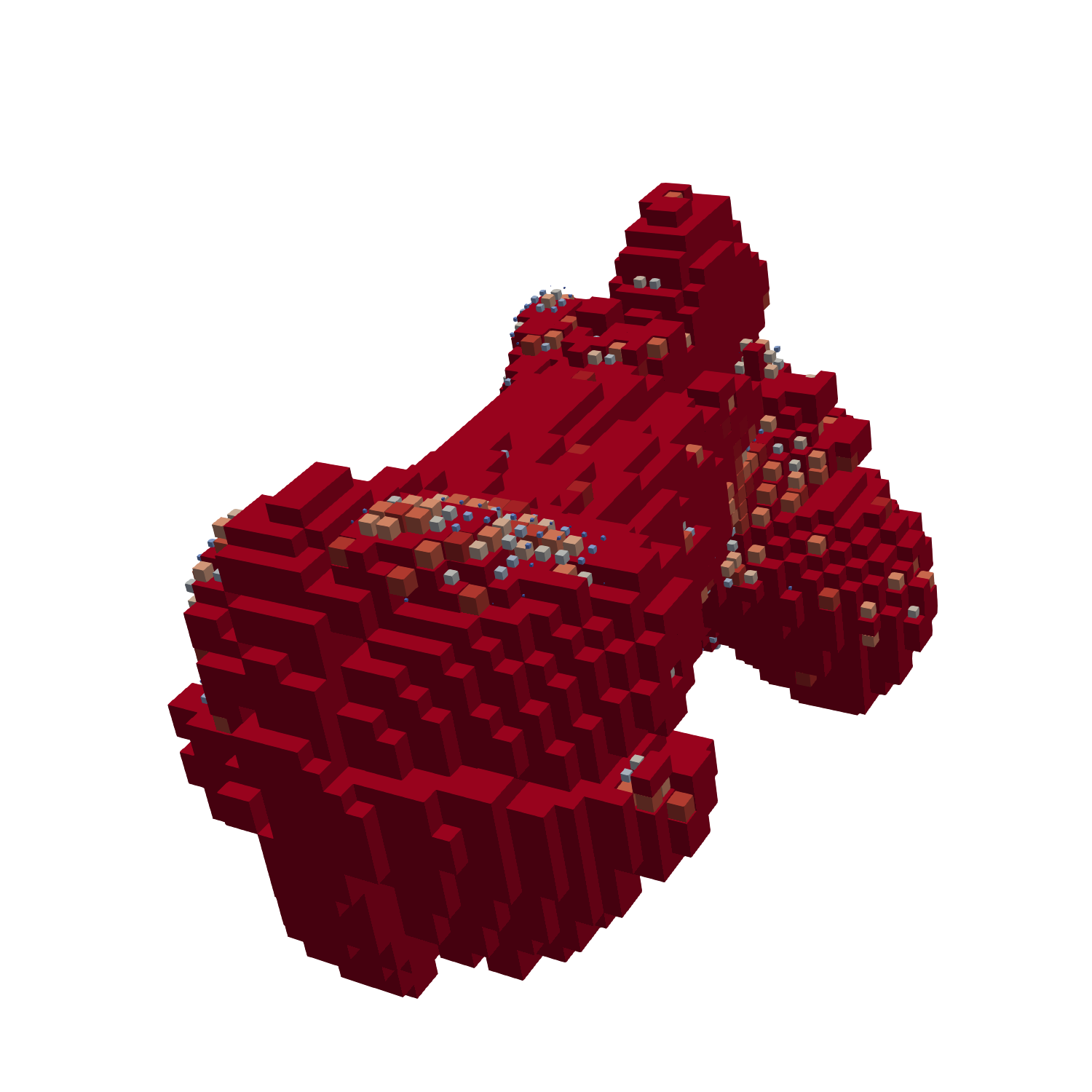}
\includegraphics[scale=0.042]{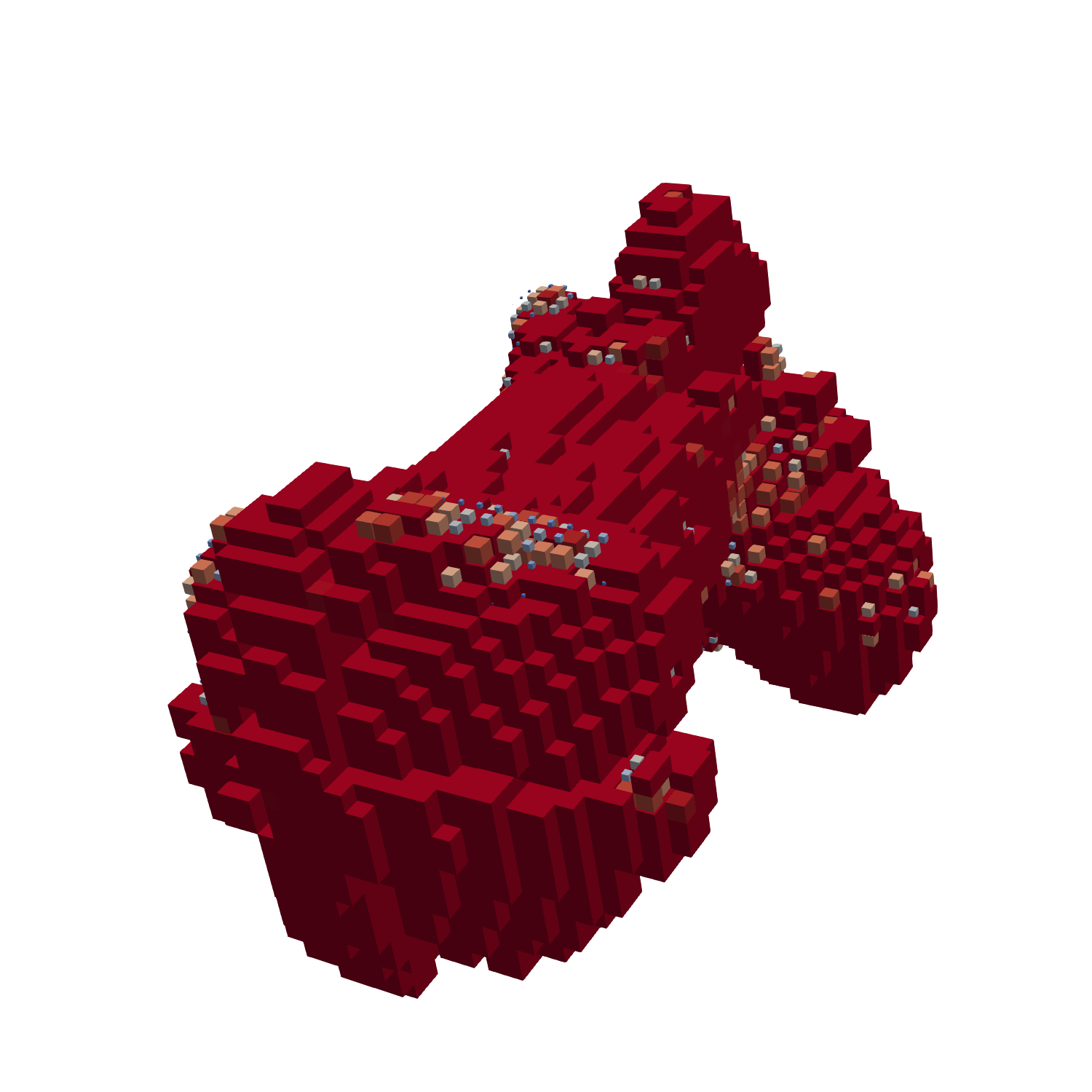}
\includegraphics[scale=0.042]{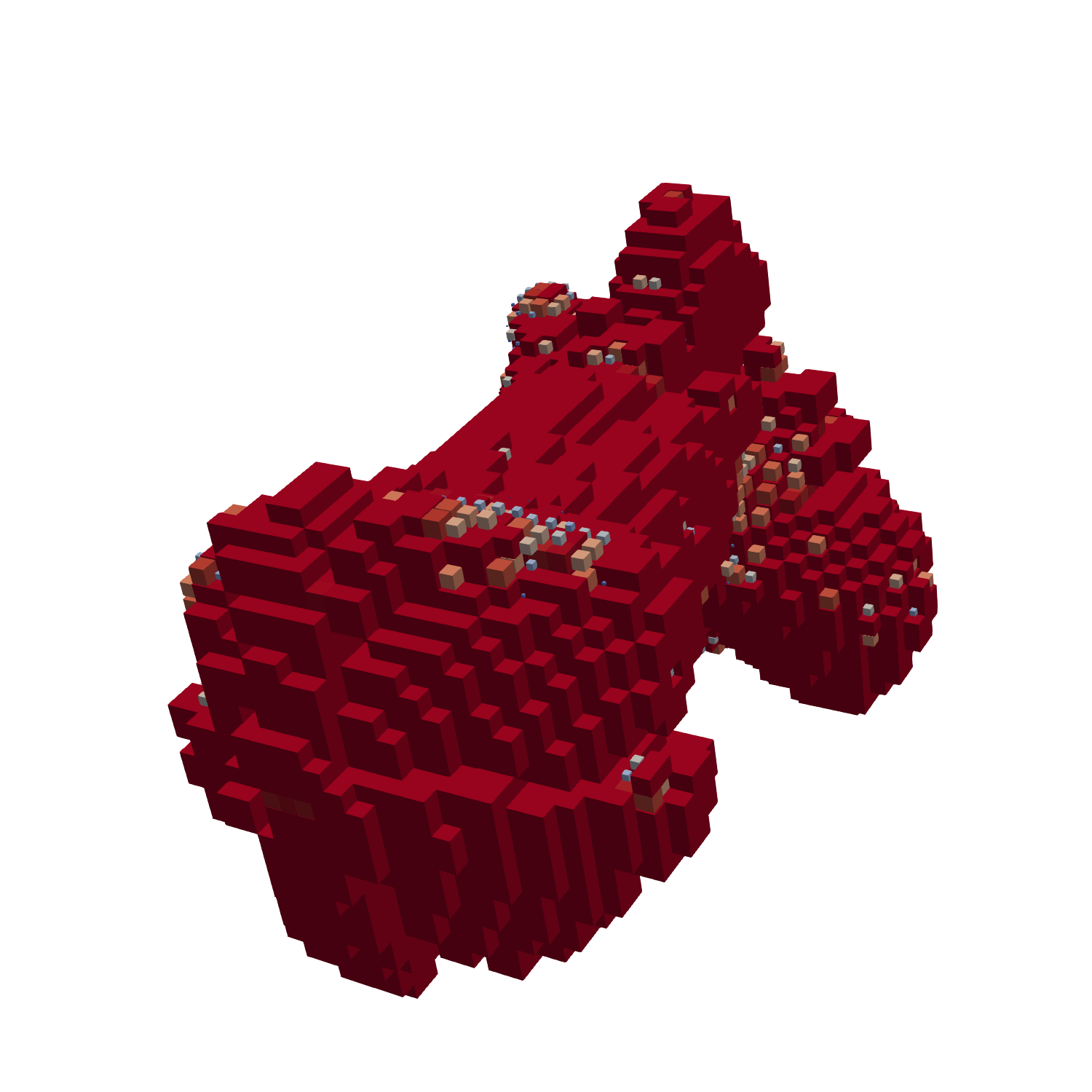}
\includegraphics[scale=0.042]{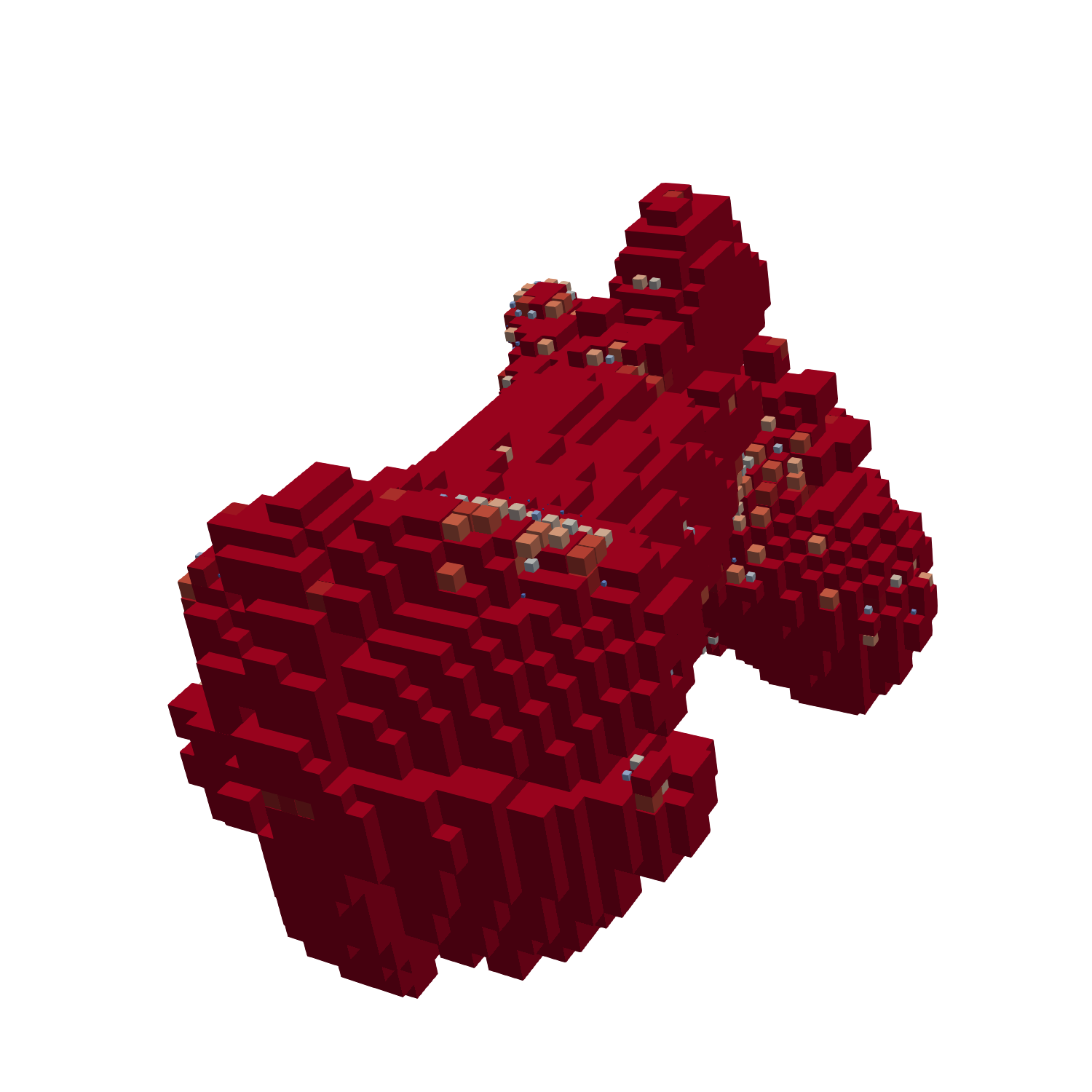}

\includegraphics[scale=0.042]{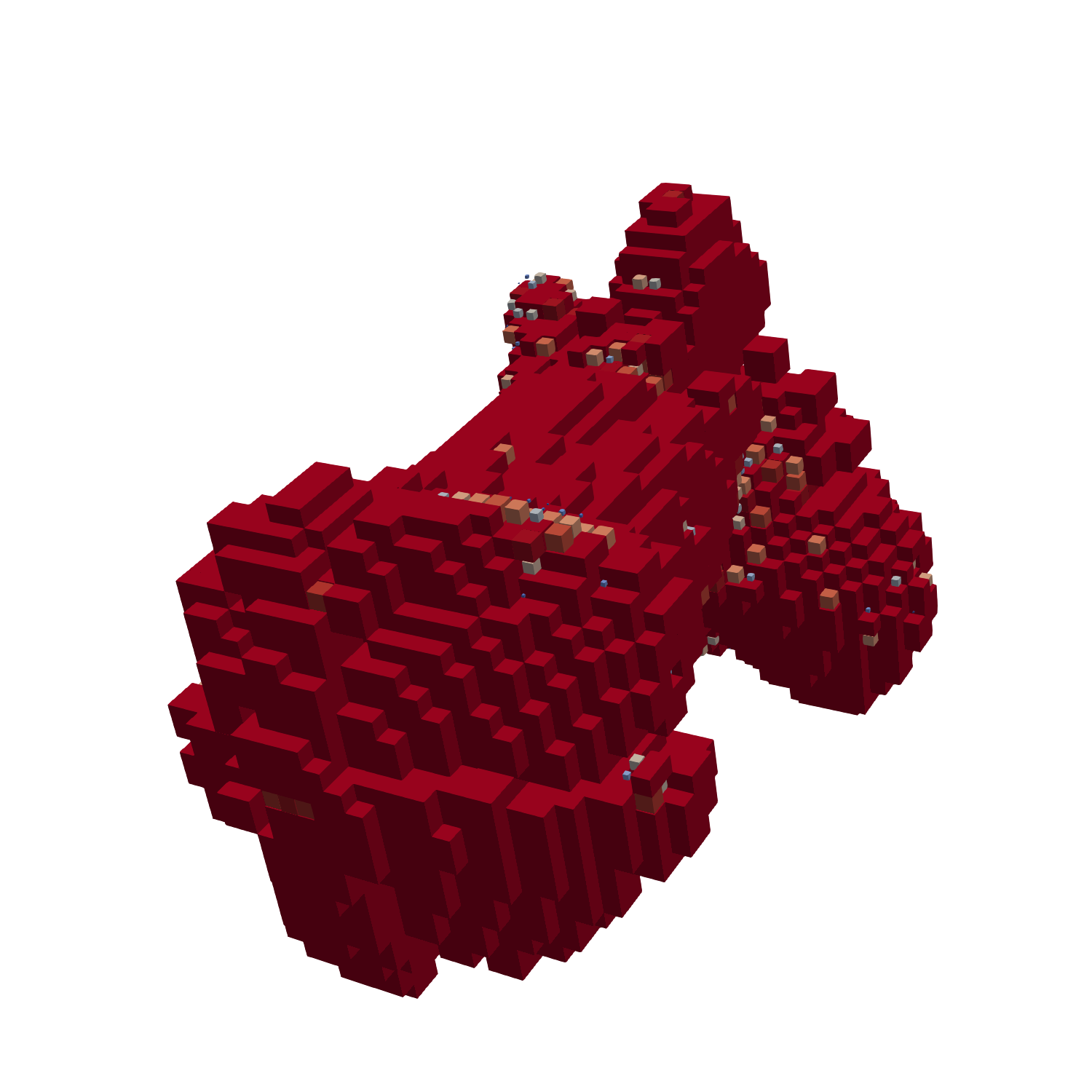}
\includegraphics[scale=0.042]{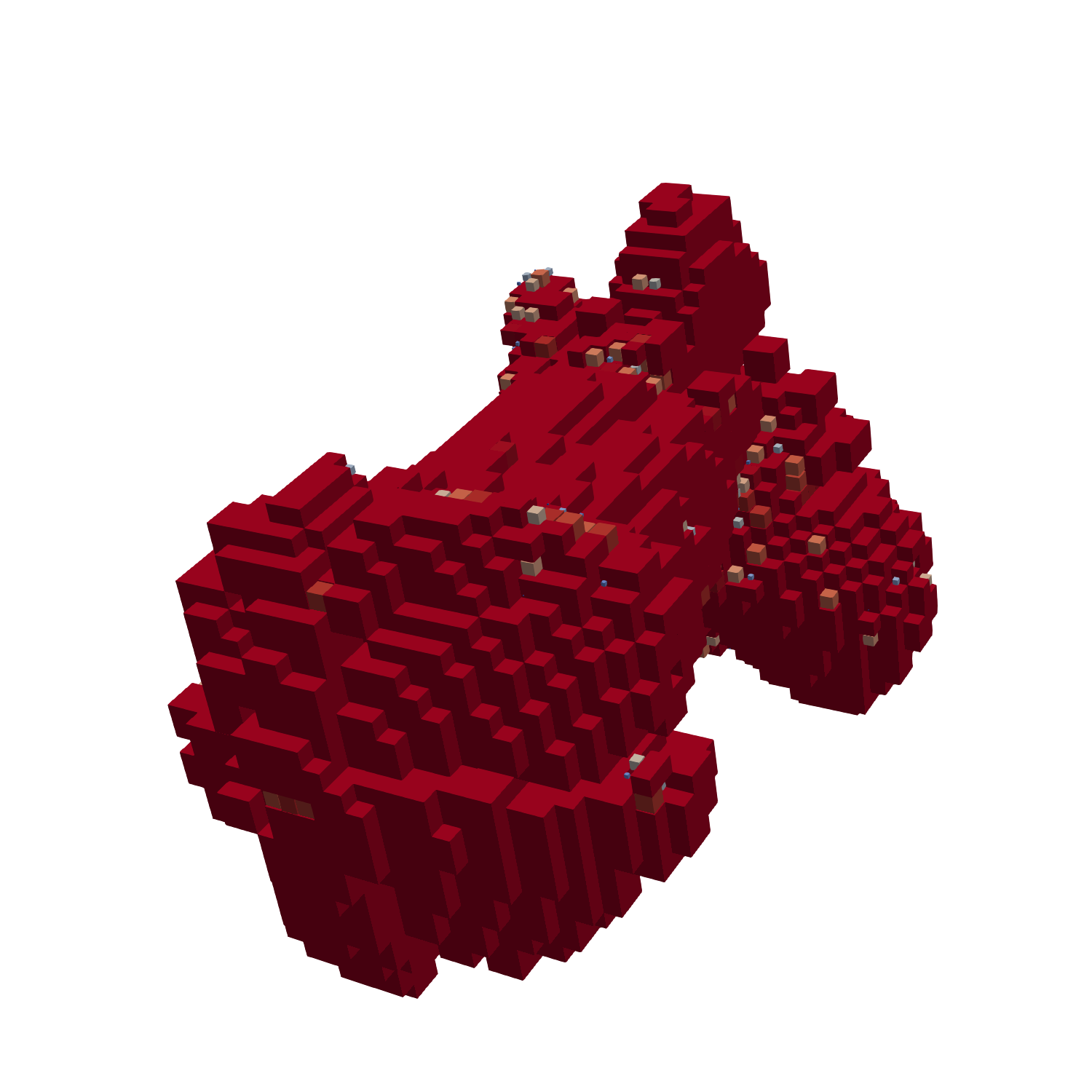}
\includegraphics[scale=0.042]{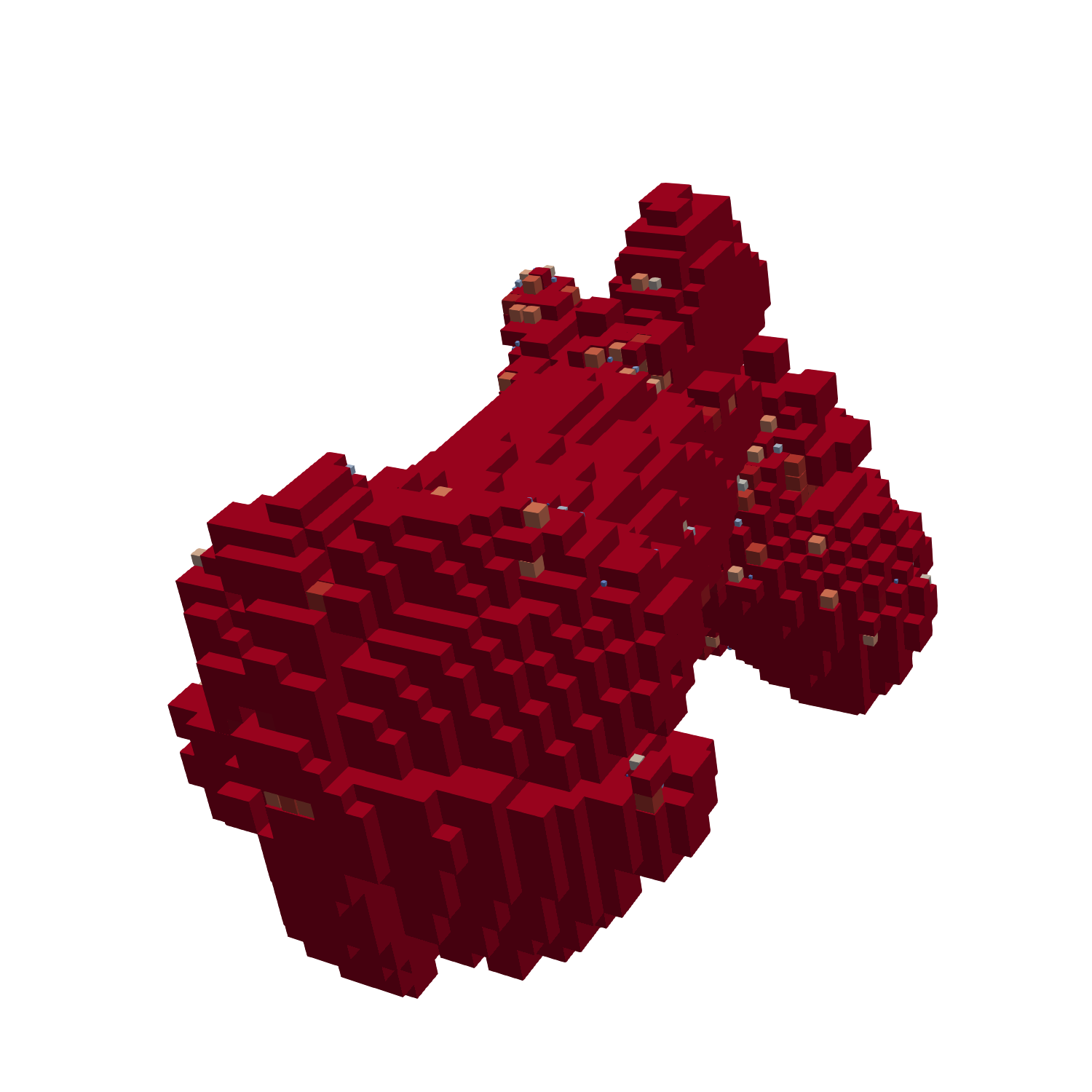}
\includegraphics[scale=0.042]{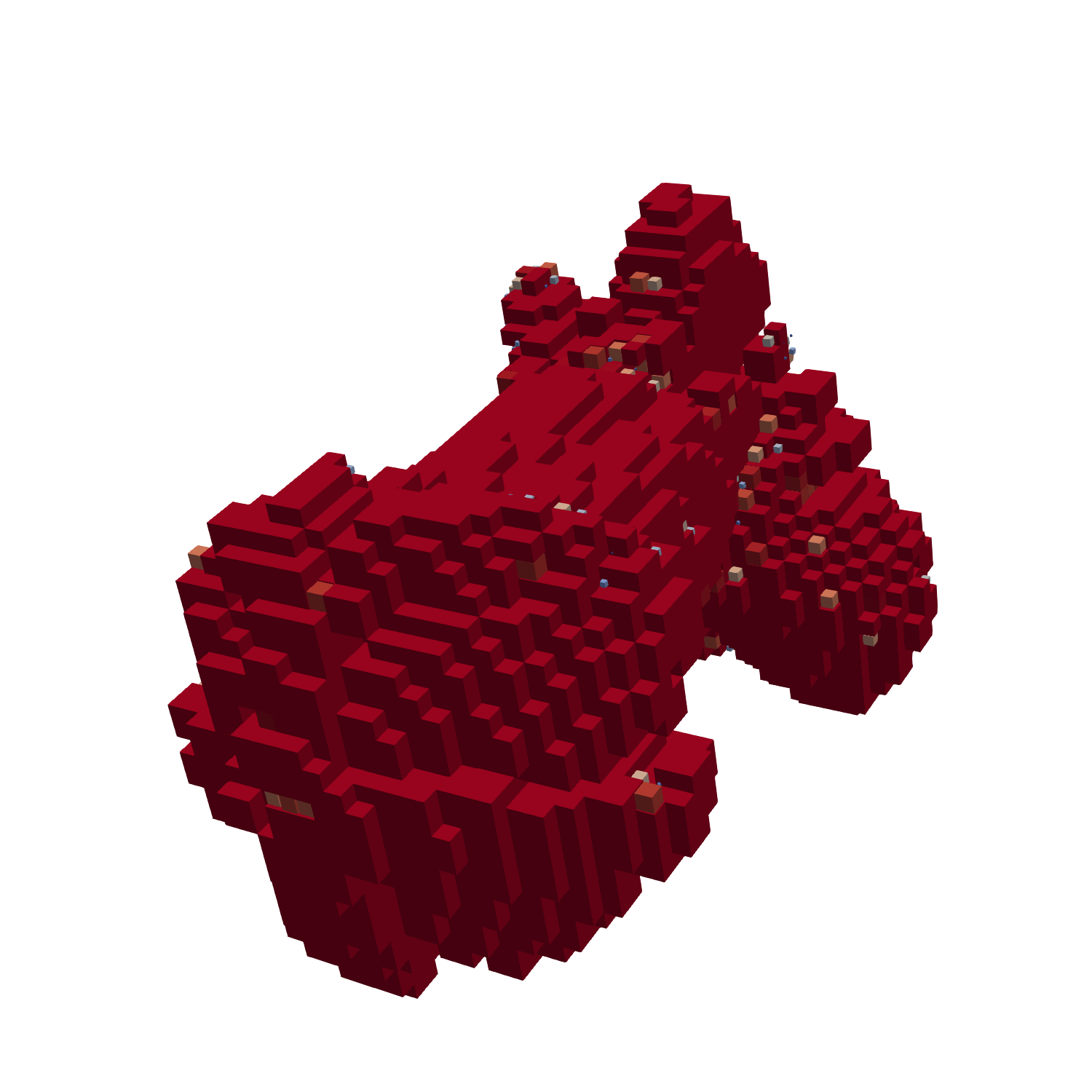}
\includegraphics[scale=0.042]{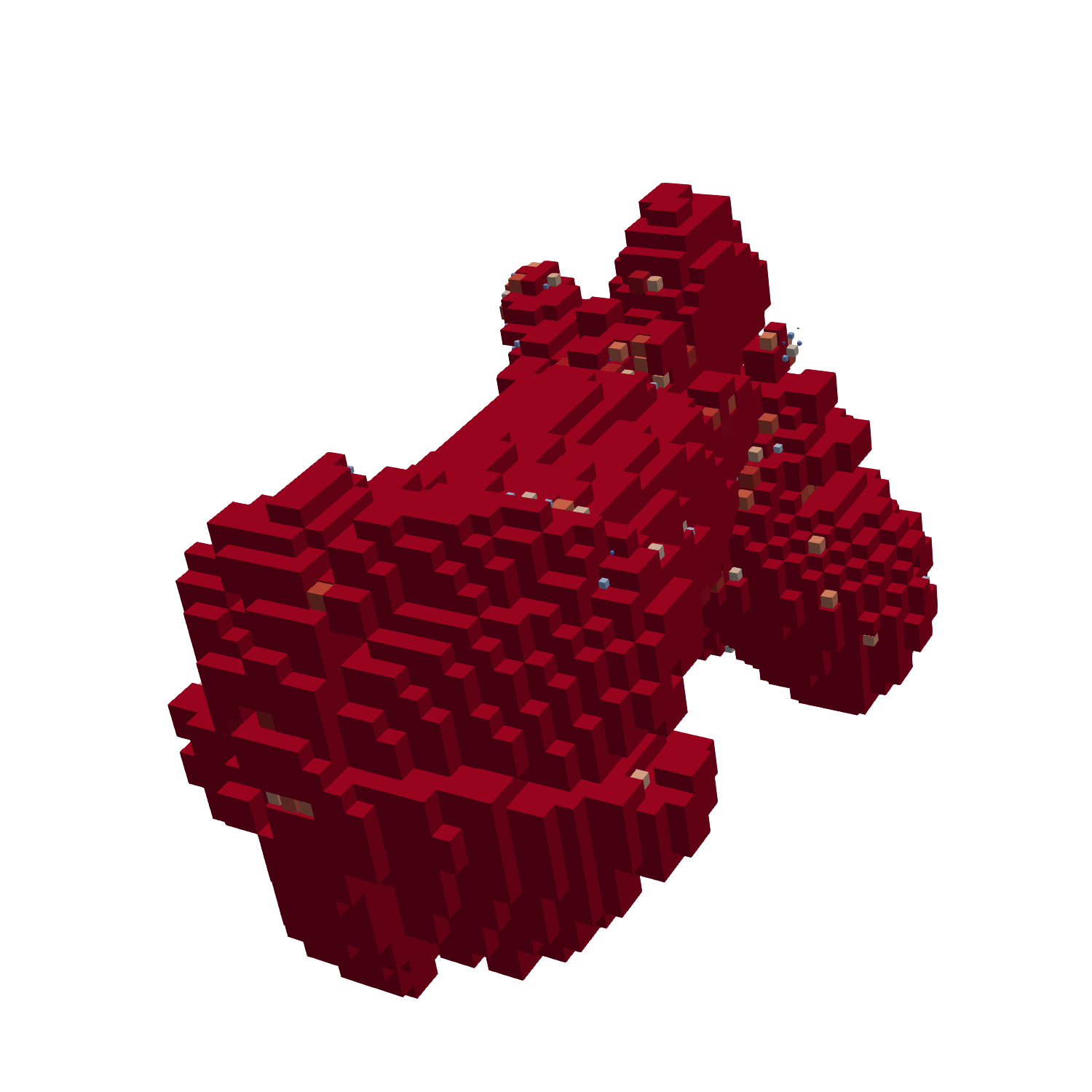}

\includegraphics[scale=0.042]{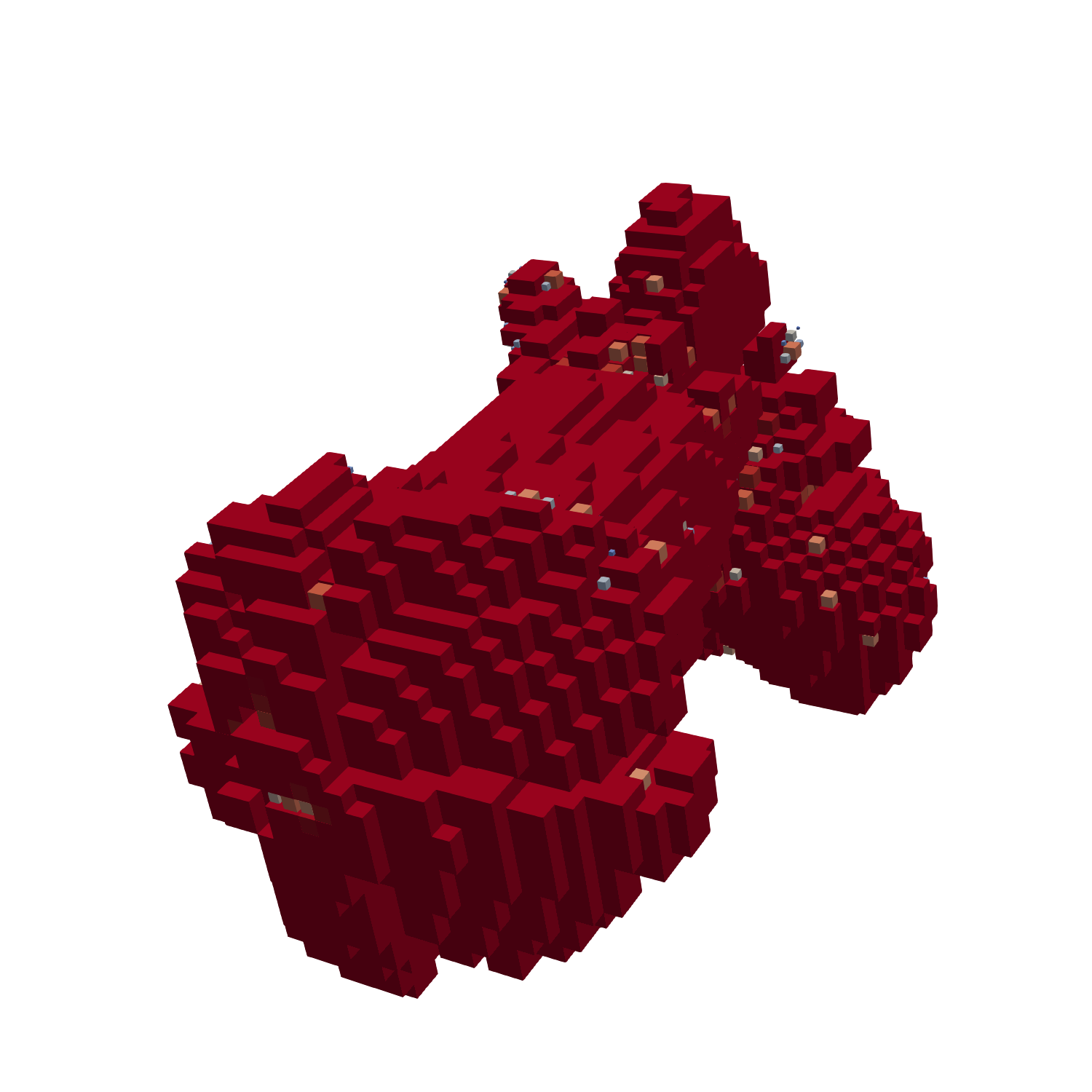}
\includegraphics[scale=0.042]{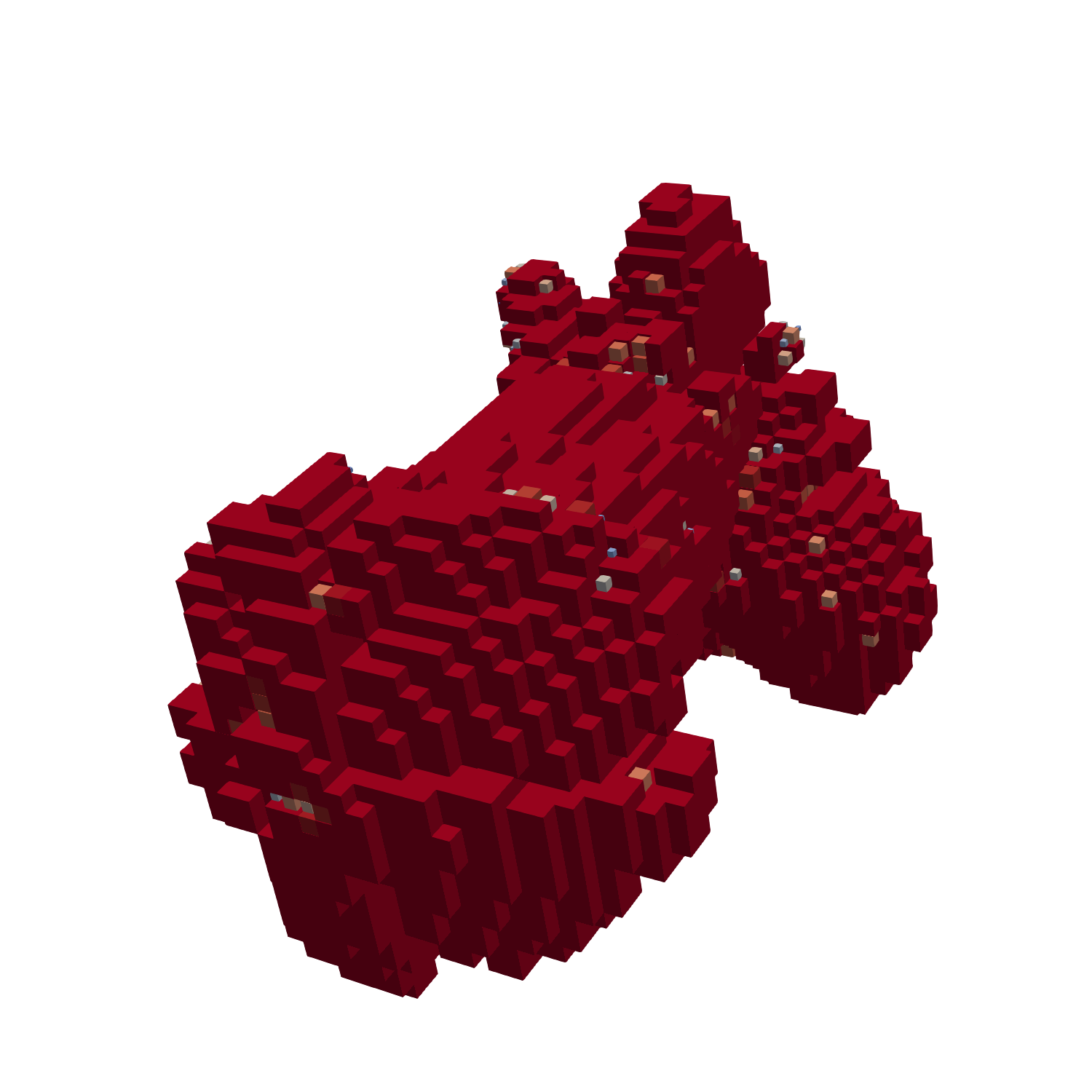}
\includegraphics[scale=0.042]{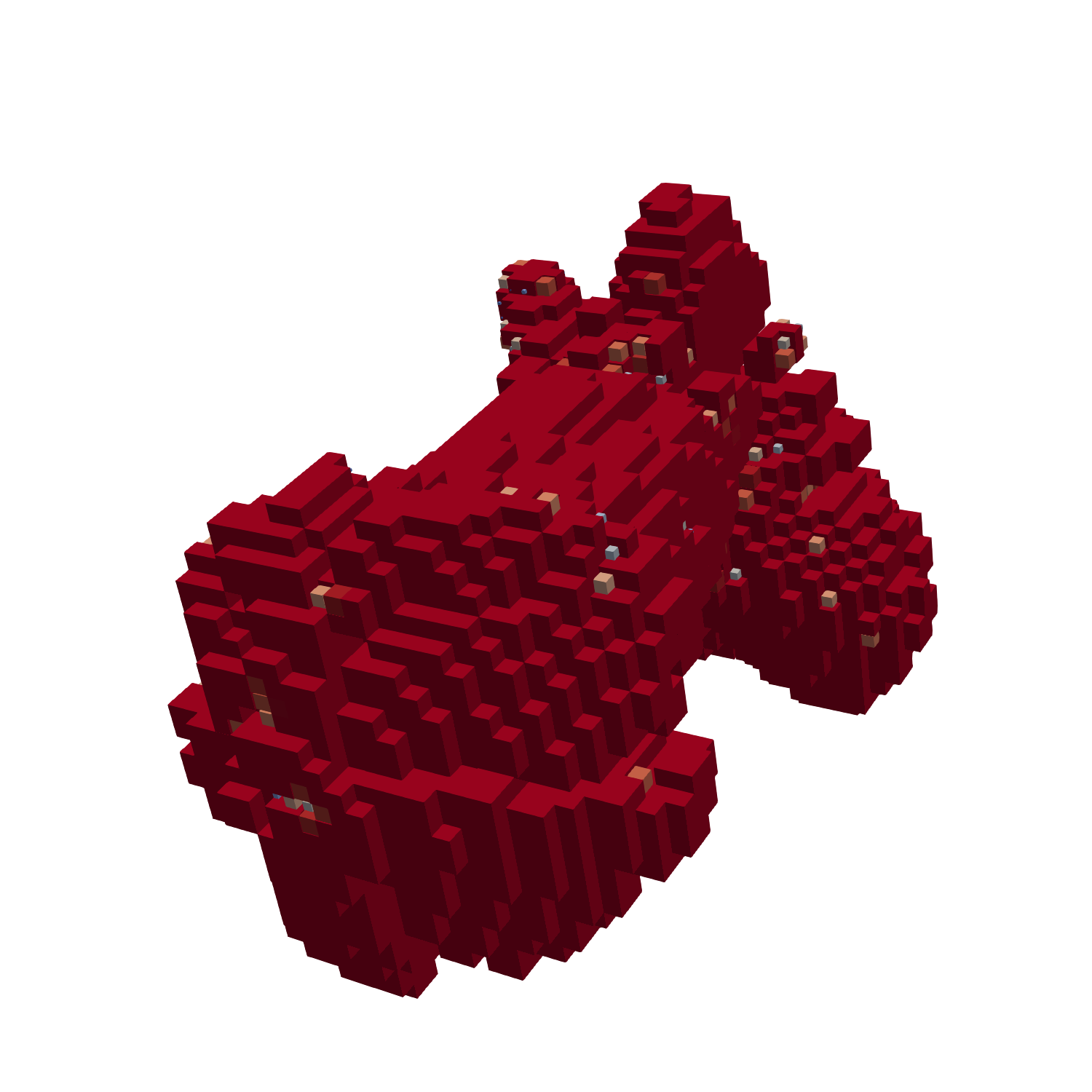}
\includegraphics[scale=0.042]{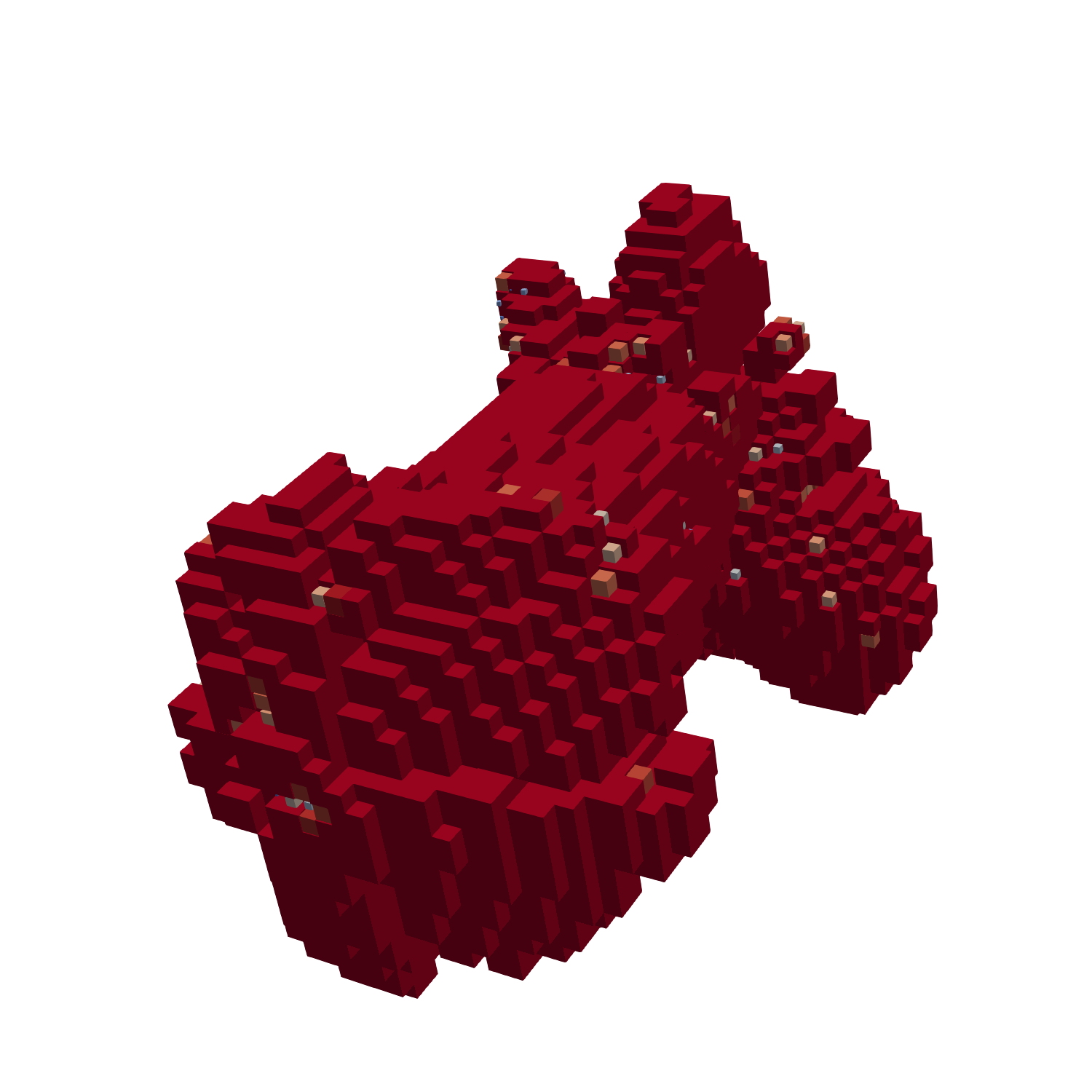}
\includegraphics[scale=0.042]{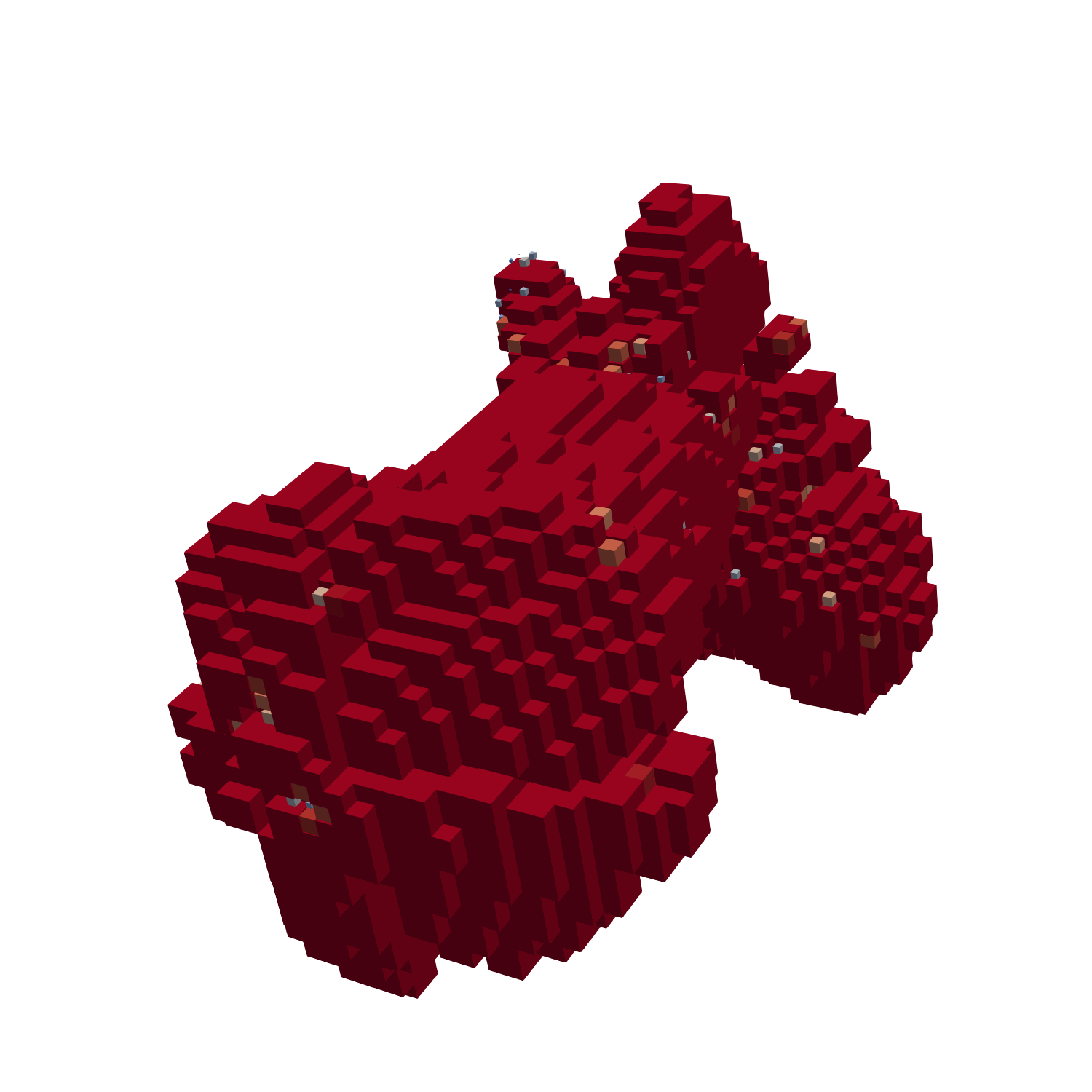}
    \caption{Top left to bottom right: Design evolution during the optimization process for the \textit{screwdriver (50\%)} initial design and outer norm (a). The design snapshots were taken every 200 iterations. Red boxes represent design cells consisting of pure hematite. Intermediate material is indicated via a color gradient, where a cell filled with $50\%$ water and $50\%$ hematite is colored grey. Based on this gradient, depending on the ratio of hematite and water in a cell, the cell color is shifted to red (more hematite) or blue (more water).}
    \label{fig:DDA_designs_ScHa}
\end{figure}
\begin{figure}
  \begin{minipage}[c]{0.5\textwidth}
    \includegraphics[width=\textwidth]{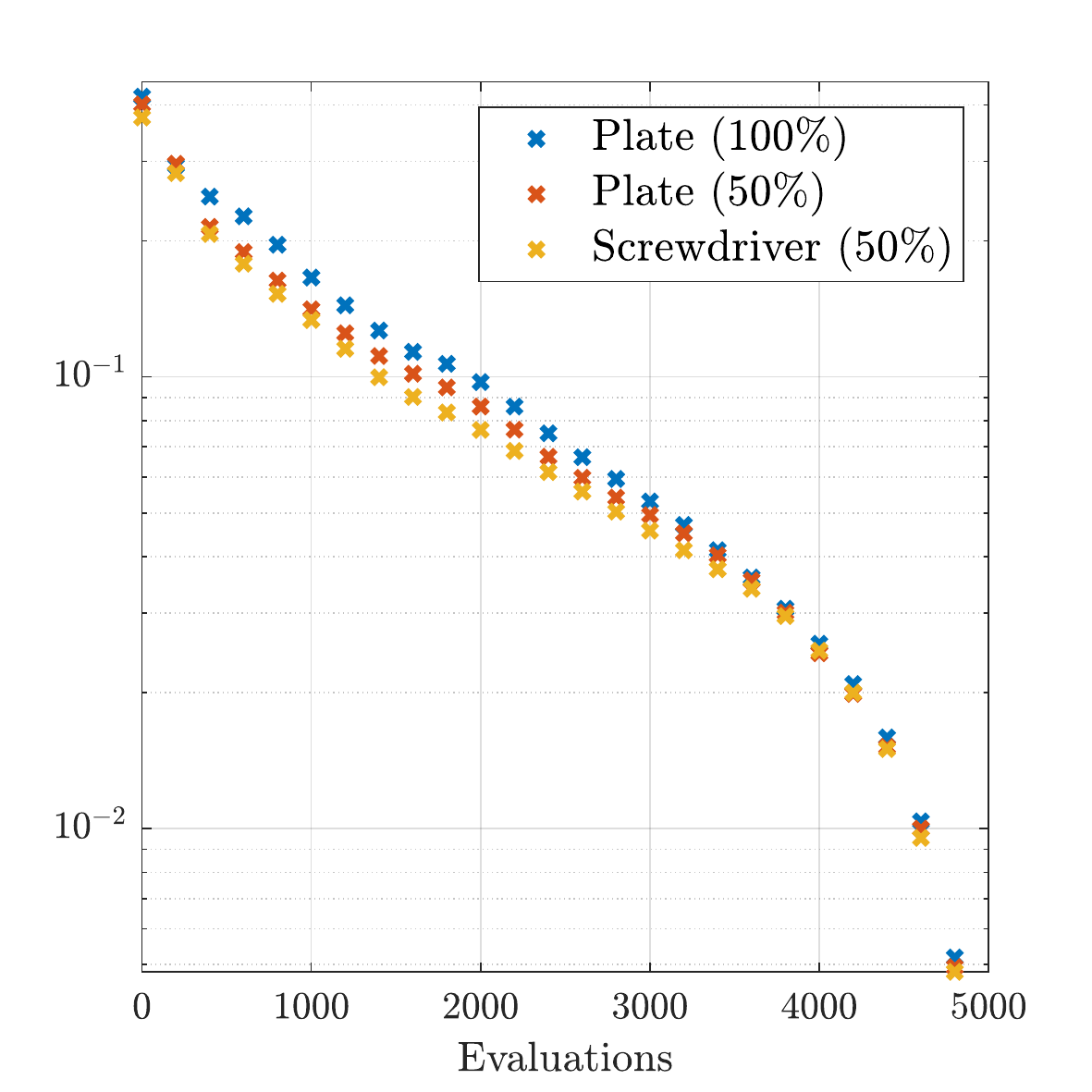}
  \end{minipage}\hfill
  \begin{minipage}[c]{0.45\textwidth}
    \caption{Euclidean distance (after dividing by $\sqrt{\dim(\U)}$ for scaling) between intermediate designs and the respective final design during the SCIBL-CSG optimization process, carried out with outer norm (a).}
    \label{fig:DesingChangeNorm}
  \end{minipage}
\end{figure}
\begin{figure} 
    \centering
    \begin{minipage}[b][][c]{0.48\textwidth}
        \centering
        \includegraphics[width = \textwidth]{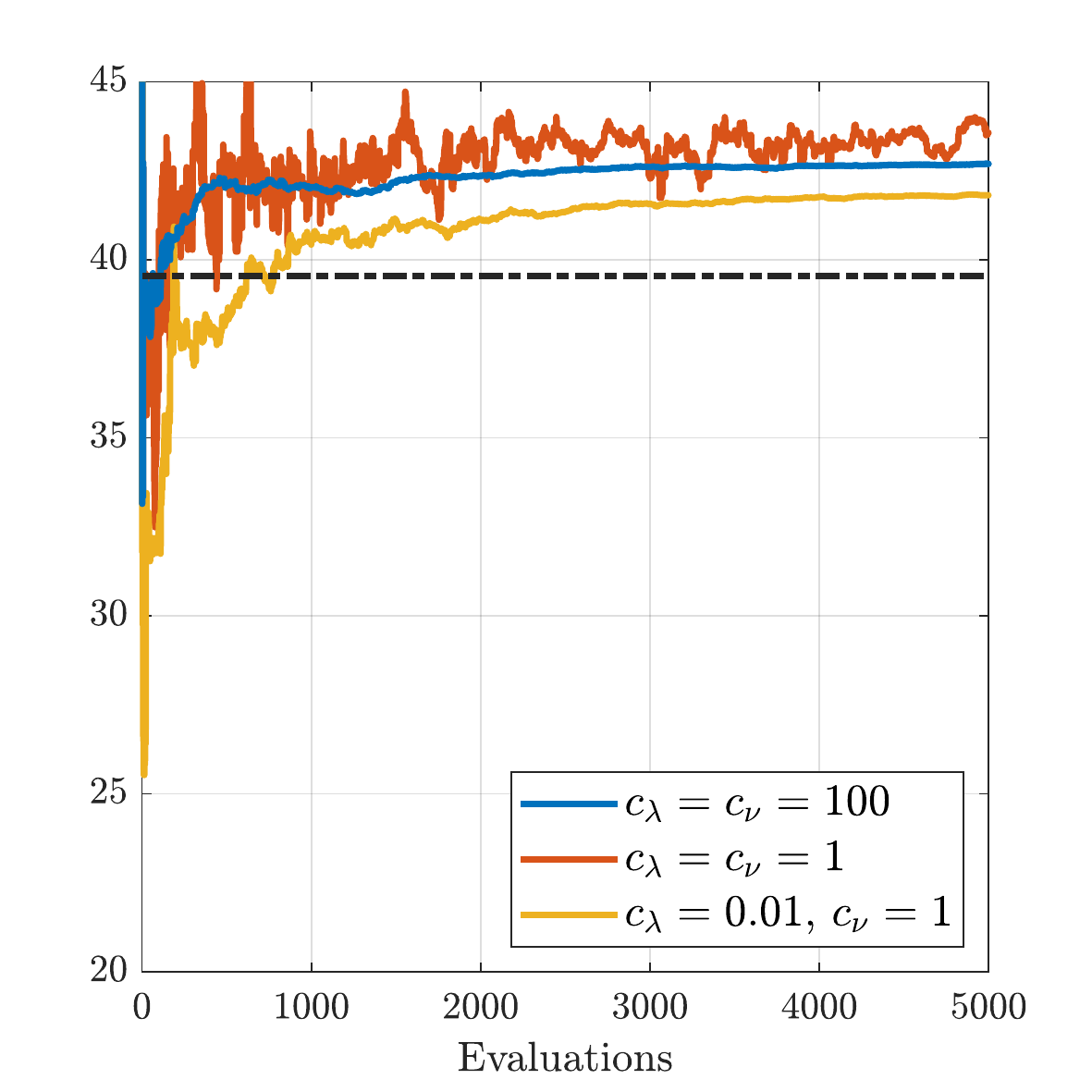}
    \end{minipage}\hfill
    \begin{minipage}[b][][c]{0.48\textwidth}
        \centering
        \includegraphics[width = \textwidth]{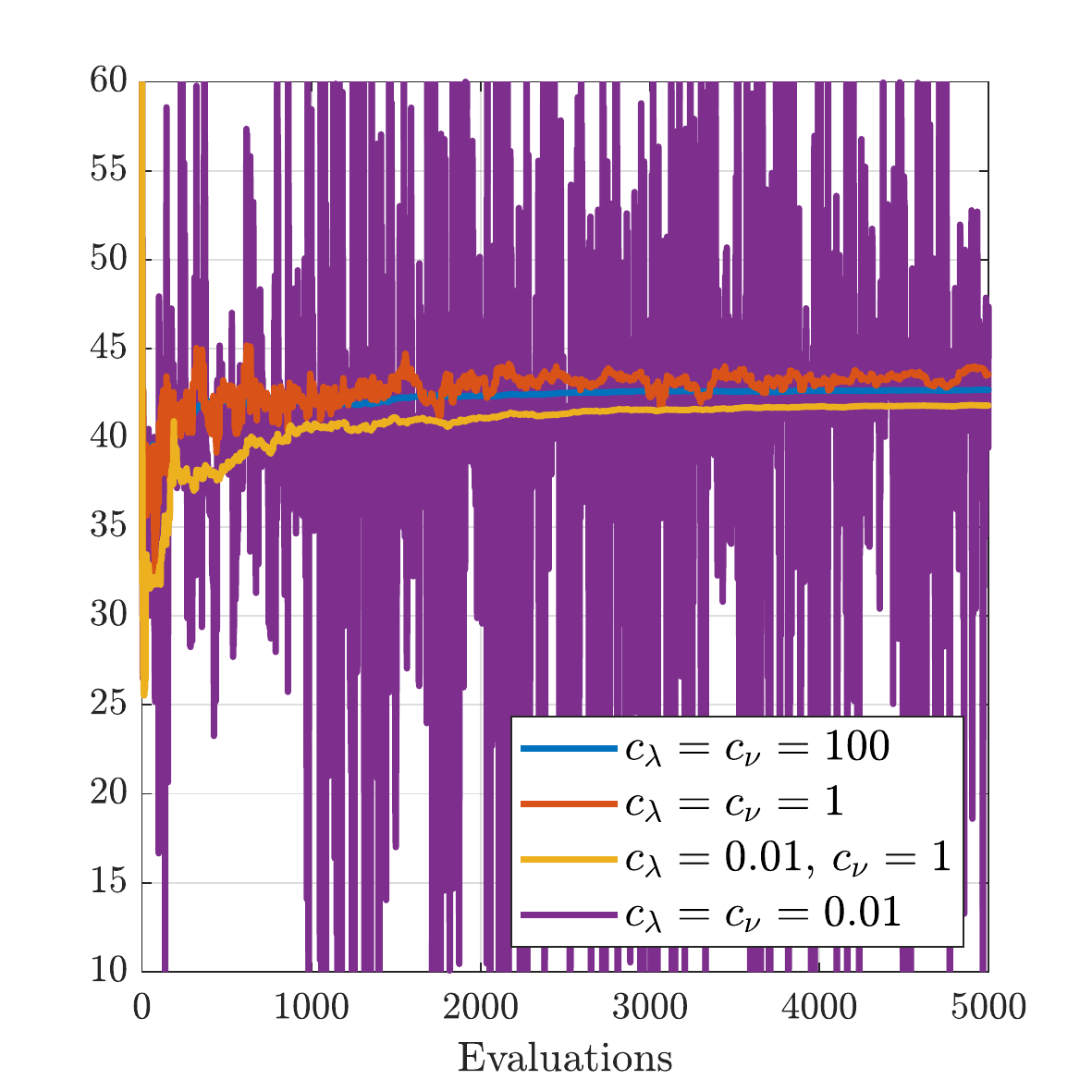}%
    \end{minipage}
    \par
    \begin{minipage}[t][][c]{0.48\textwidth}
        \caption{CSG objective function value approximation during the optimization process for the \textit{plate (100\%)} initial design. The dashed line shows the inital objective function value, whereas the different graphs correspond to the choices (a), (b) and (c) for $\Vert\cdot\Vert_{_\text{Out}}$.}
        \label{fig:DDAPlateFull3}
    \end{minipage}\hfill
    \begin{minipage}[t][][c]{0.48\textwidth}
        \caption{Results for the \textit{plate (100\%)} initial design presented in \Cref{fig:DDAPlateFull3}, augmented by the CSG objective function value approximation in the case that $\Vert\cdot\Vert_{_\text{Out}}$ was chosen according to (d).}
        \label{fig:DDAPlateFull4}
    \end{minipage}
\end{figure}
\FloatBarrier
\section{Online Error Estimation}\label{sec:ErrorEstimates}
Before we go into theoretical details, we first collect a few key properties and results concerning CSG, which were shown in \cite{CSGPart1}. In a first simple setting, we consider optimization problems of the form 
\begin{equation}\label{eq:SimpleSetting}
    \begin{aligned}
    \min \quad & J(u) \\
    \text{s.t.}\quad & u\in\U\subset\R^\ddes\text{ for some }\ddes\in\N.
    \end{aligned}
\end{equation}
Additionally, we assume that $\U$ is compact, and for some $\dran\in\N$, there exists an open an bounded set $\X\subset\R^\dran$ and a measure $\mu$ with $\supp(\mu)\subset\X$, such that $J$ can be written as $J(u) = \int_\X j(u,x)\mu(\mathrm{d}x)$. The detailed set of assumptions is given in \cite[Section 2]{CSGPart1}. For now, it is only important that $\nabla_1 j:\U\times\X\to\R$ is bounded and Lipschitz continuous with Lipschitz constant $L_j$. 

During the optimization process, CSG computes design dependent integration weights $\big(\alpha_k\big)_{k=1\ldots,n}$ (cf. \cite[Section 3]{CSGPart1}) to build an approximation $\hat{G}_n$ to the true objective function gradient, based on the available samples from previous iterations $\big(\nabla_1 j(u_k,x_k)\big)_{k=1,\ldots,n}$. To be precise, we have
\begin{equation*}
    \nabla J(u) = \int_\X \nabla_1 j(u,x) \mu(\mathrm{d}x) \approx \sum_{k=1}^n \alpha_k \nabla_1 j(u_k,x_k) =: \hat{G}_n.
\end{equation*}
It was shown in \cite[Lemma 4.7]{CSGPart1}, that 
\begin{equation*}
    \Vert \nabla J(u_n)-\hat{G}_n\Vert\to 0 \quad\text{for }n\to\infty \text{ almost surely}. 
\end{equation*}
Carefully investigating the methods to obtain the integration weights, we observe that
\begin{align*}
    \left\Vert \nabla J(u_n)-\hat{G}_n\right\Vert &= \left\Vert \int_\X \nabla_1 j(u_n,x)\mu(\mathrm{d}x) - \hat{G}_n\right\Vert \\
    &= \left\Vert \sum_{i=1}^n \int_{M_i} \nabla_1 j(u_n,x)\mu(\mathrm{d}x) - \sum_{i=1}^n \nabla_1 j(u_i,x_i)\nu_n(M_i)\right\Vert,
\end{align*}
where $\nu_n$ denotes the measure associated to one of the measures listed in \cite[Section 3.6]{CSGPart1}, depending on the choice of integration weights, and
\begin{align*}
    M_k := \big\{ x\in\X \, : \, \Vert u_n& - u_k \Vert_\sU + \Vert x - x_k\Vert_\sX  \\ &< \Vert u_n - u_j \Vert_\sU + \Vert x - x_j\Vert_\sX \text{ for all } j\in\{1,\ldots,n\}\setminus\{k\}\big\}.
\end{align*}
By construction, $M_k$ contains all points $x\in\X$, such that $(u_n,x)$ is closer to $(u_k,x_k)$ than to any other previous point we evaluated $\nabla_1 j$ at.
For exact integration weights, we have $\nu_n=\mu$ and thus
\begin{align*}
    \left\Vert \nabla J(u_n)-\hat{G}_n\right\Vert &= \left\Vert \sum_{i=1}^n \int_{M_i} \nabla_1 j(u_n,x)\mu(\mathrm{d}x) - \sum_{i=1}^n \int_{M_i} \nabla_1 j(u_i,x_i)\mu(\mathrm{d}x)\right\Vert\\
    &\le \sum_{i=1}^n  \int_{M_i} \left\Vert \nabla_1 j(u_n,x)-\nabla_1 j(u_i,x_i)\right\Vert\mu(\mathrm{d}x)\\
    &\le \sum_{i=1}^n \int_{M_i} L_j \cdot\left( \sup_{x\in M_i} Z_n(x) \right)\mu(\mathrm{d}x) \\
    &= L_j\sum_{i=1}^n \mu(M_i)\sup_{x\in M_i} Z_n(x)\\
    &\le L_j\sup_{x\in\X} Z_n(x).
\end{align*}
Here, $Z_n$ is given by
\begin{equation*}
    Z_n(x) := \min_{k\in\{1,\ldots,n\}}\big(\Vert u_n-u_k\Vert_\sU + \Vert x-x_k\Vert_\sX\big) .
\end{equation*}
In other words, the approximation error can be bounded in terms of the Lipschitz constant of $\nabla_1 j$ and the quantity $Z_n$, which relates to the size of Voronoi cells \cite{voronoi} with positive integration weights.

Both $L_j$ and $\sup_{x\in\X} Z_n(x)$ can be efficiently approximated during the optimization process, e.g. by finite differences of the samples $\big(\nabla_1 j(u_i,x_i)\big)_{i=1,\ldots,n}$ and by 
\begin{equation*}
    \sup_{x\in\X} Z_n(x) \approx \max_{k=1,\ldots,n} Z_n(x_k),
\end{equation*}
yielding an online error estimation. Such an approximation may, for example, be used in stopping criteria.
\section{Convergence Rates}\label{sec:ConvergenceRates}
Throughout this section, we assume \cite[Assumptions 2.2 - 2.8]{CSGPart1} to be satisfied.
\subsection{Theoretical Background}\label{sec:ConvergenceRatesTheory}
In the convergence analysis presented in \cite{CSGPart1}, we have already seen that the fashion in which the gradient approximation $\hat{G}_n$ is calculated in CSG is crucial for $\Vert \hat{G}_n-\nabla J(u_n)\Vert \to 0$ and that this property of CSG in turn is the key to all advantages CSG offers in comparison to classic stochastic optimization methods, like convergence for constant steps, backtracking, more involved optimization problems, etc.

The price we pay for this feature lies within the dependency of $\hat{G}_n$ on the past iterates. For comparison, the search direction $\hat{G}_n^{\text{SG}}$ in a stochastic gradient descent method is given by
\begin{equation*}
    \hat{G}_n^{\text{SG}} = \nabla_1 j(u_n,x_n).
\end{equation*}
Thus, it is independent of all previous steps and fulfills
\begin{equation*}
    \mathbb{E}_\X\left[\hat{G}_n^{\text{SG}}\right] = \mathbb{E}_\X\big[ \nabla_1 j(u_n,\cdot)\big] = \nabla J(u_n),
\end{equation*}
i.e., it is an unbiased sample of the full gradient. The combination of these properties allows for a straight-forward convergence rate analysis, see, e.g., \cite{Steps01}.

In contrast, $\hat{G}_n$ is in general \textit{not} an unbiased approximation to $\nabla J(u_n)$ and moreover \textit{not} independent of $\big(u_i,x_i)_{i=1,\ldots,n-1}$. The main problem in finding the convergence rate of $\Vert u_{n+1}-u_n\Vert\to0$ is, that this quantity depends on the approximation error $\Vert\hat{G}_n-\nabla J(u_n)\Vert$, which, as we have seen in \Cref{sec:ErrorEstimates}, depends on $Z_n$. Since $Z_n$ itself is deeply connected to $\min_k\Vert u_{n} - u_k\Vert$, we run into a circular argument.

Therefore, up to now, we are not able to proof convergence rates for the CSG iterates. We can, however, state a prediction to this rate and provide numerical evidence. 

\begin{claimnew}\label{rem:ProposedRates}
We claim that the CSG method, applied to problem \eqref{eq:SimpleSetting}, using a constant step size $\tau < \tfrac{2}{L}$ and empirical integration weights, fulfills
\begin{equation*}
    \Vert u_{n+1} - u_n\Vert = \mathcal{O}\left( \ln(n)\cdot n^{-\tfrac{1}{\max\{2,\dran\}}}\right).
\end{equation*}
\end{claimnew}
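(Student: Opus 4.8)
The plan is to close the circular dependency between $\|u_{n+1}-u_n\|$ and $Z_n$ by a bootstrapping argument, starting from a crude a priori bound and refining it. First I would record the basic descent-type recursion: since the step is $u_{n+1} = P_\U(u_n - \tau \hat G_n)$ and $\nabla J$ is $L$-Lipschitz, the standard $L$-smooth estimate combined with $\tau < \tfrac 2L$ gives $\|u_{n+1}-u_n\| \le \tau\|\hat G_n\|$ and, more usefully, that $\|u_{n+1}-u_n\|$ is controlled by $\|\nabla J(u_n)\| + \|\hat G_n - \nabla J(u_n)\|$. From the error decomposition in \Cref{sec:ErrorEstimates}, the second term is $\le L_j \sup_{x\in\X} Z_n(x)$ in the exact-weight case; for empirical weights one picks up an additional stochastic discrepancy term $\|\mu - \nu_n\|$ on the relevant Voronoi cells, which by standard empirical-measure/Glivenko--Cantelli rates in dimension $\dran$ contributes at scale $n^{-1/\max\{2,\dran\}}$ (up to the logarithmic factor coming from the union bound over the $n$ cells and from covering numbers). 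So the whole task reduces to bounding $\sup_{x\in\X} Z_n(x)$.

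Next I would analyze $Z_n$. By definition $Z_n(x) = \min_{k\le n}\big(\|u_n - u_k\|_\sU + \|x-x_k\|_\sX\big)$. Split the minimum over the last $m$ indices $k\in\{n-m,\dots,n\}$: for those, $\|u_n-u_k\|_\sU \le \sum_{\ell=k}^{n-1}\|u_{\ell+1}-u_\ell\|$, which if we already know $\|u_{\ell+1}-u_\ell\| = O(a_\ell)$ for some decreasing sequence $a_\ell$ is $O(m\, a_{n-m})$. Meanwhile $x_{n-m},\dots,x_n$ are $m+1$ i.i.d. samples from $\mu$ on $\X\subset\R^\dran$, so with high probability they form an $O\big((\ln m / m)^{1/\dran}\big)$-net of $\supp(\mu)$ (and an $O(m^{-1/2})$-type control enters when $\dran\le 2$, explaining the $\max\{2,\dran\}$). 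Hence $\sup_x Z_n(x) = O\big(m\,a_{n-m} + (\ln m/m)^{1/\max\{2,\dran\}}\big)$; optimizing $m$ as a power of $n$ balances the two terms. This yields a self-improving inequality of the form $a_n \lesssim \ln(n)\, n^{-1/\max\{2,\dran\}} + (\text{geometric feedback from } a)$, and iterating it (each pass sharpens the exponent until it reaches the fixed point $1/\max\{2,\dran\}$) gives the claimed rate $\|u_{n+1}-u_n\| = \mathcal O\big(\ln(n)\, n^{-1/\max\{2,\dran\}}\big)$.

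The main obstacle is making the bootstrap rigorous rather than heuristic: one must (i) handle the fact that $\hat G_n$, hence $u_{n+1}$, is not independent of $x_1,\dots,x_n$, so the "fresh samples form a net" step needs to be phrased for the full i.i.d. sequence with a uniform-in-$n$ high-probability bound (a Borel--Cantelli / maximal-inequality argument over dyadic blocks), and (ii) control the feedback term carefully enough that the iteration actually converges to the stated exponent and does not lose the sharp constant in the log power. A secondary difficulty is that empirical weights only give $\nu_n \to \mu$ weakly, so bounding $\|\mu-\nu_n\|$ on the data-dependent Voronoi cells $M_i$ requires a VC-type uniform bound over the class of such cells; controlling that class's complexity in terms of $n$ and $\dran$ is what produces the $\ln n$ factor and fixes the dimensional exponent. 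It is precisely because these steps resist a fully uniform treatment that the statement is offered as \Cref{rem:ProposedRates} (a claim) with numerical confirmation rather than as a theorem.
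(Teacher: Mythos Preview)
The paper does not prove \Cref{rem:ProposedRates}; it is explicitly stated as a claim, with heuristic motivation and numerical evidence. The paper's route is different from yours: it first bounds the approximation error by $C\big(\int_\X Z_n\,\mathrm{d}\mu + d_W(\mu_n,\mu)\big)$, quotes known Wasserstein rates for empirical measures for the second term, and for the first term uses a coupon-collector argument on an $\varepsilon$-cover of $\X$ \emph{under the explicit assumption that the design contribution to $Z_n$ is negligible} (this is where the circularity is simply assumed away, not resolved). The passage from error rate to step rate is then handled separately and rigorously in \Cref{prop:ProposedWeightsU}, by a descent-lemma contradiction: if the step did not decay at the stated rate, eventually the error would be small relative to the step, the descent lemma would force $J(u_n)$ to be monotone, and summability of $\|u_{n+1}-u_n\|^2$ would contradict the assumed non-decay.

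Your bootstrap with a window of the last $m$ samples is a genuinely different attack on the circularity the paper leaves open, and it is the more ambitious one. But there is a concrete gap in the sketch. You write that $\|u_{n+1}-u_n\|$ is controlled by $\|\nabla J(u_n)\| + \|\hat G_n - \nabla J(u_n)\|$, and then pass to a self-improving inequality $a_n \lesssim \ln(n)\,n^{-1/\max\{2,\dran\}} + (\text{feedback in }a)$; in that passage the term $\|\nabla J(u_n)\|$ has silently vanished. It is not a feedback term in the $a_k$'s and carries no a priori rate, so the recursion does not close as written. This is precisely the difficulty the paper's \Cref{prop:ProposedWeightsU} sidesteps by working with $J(u_{n+1})-J(u_n)$: the descent lemma yields
\[
J(u_{n+1})-J(u_n)\le \Big(\tfrac{L}{2}-\tfrac{1}{\tau}\Big)\|u_{n+1}-u_n\|^2 + \big\|\hat G_n-\nabla J(u_n)\big\|\,\|u_{n+1}-u_n\|,
\]
in which $\|\nabla J(u_n)\|$ never appears. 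If you want the bootstrap to go through, you would need either to feed this descent relation into the iteration in place of the crude bound $\|u_{n+1}-u_n\|\le\tau\|\hat G_n\|$, or to add an assumption (isolated limit point plus a local error bound of the form $\|\nabla J(u)\|\lesssim \|u-u^\ast\|$) that ties $\|\nabla J(u_n)\|$ back to tail sums of the $a_k$'s.
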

To motivate this claim, note that, in the proof of \cite[Lemma 4.7]{CSGPart1}, it was shown that there exists $C>0$ such that
\begin{equation*}
    \left\Vert\hat{G}_n-\nabla J(u_n)\right\Vert \le C \left(\int_\X Z_n(x)\mu(\mathrm{d}x) + d_W(\mu_n,\mu)\right),
\end{equation*}
where $d_W$ denotes the Wasserstein distance of the two measures $\mu_n$ and $\mu$. By \cite[Theorem 1]{WassersteinRate}, the empirical measure $\mu_n$ satisfies
\begin{equation*}
    \mathbb{E}\big[d_W(\mu_n,\mu)\big] \le C(\dran)\cdot \left(\int_\X \Vert x\Vert_\sX^3\mu(\mathrm{d}x)\right)^{\tfrac{1}{3}}\cdot \begin{cases} \tfrac{1}{\sqrt{n}} & \text{if }\dran = 1, \\  \tfrac{\ln(1+n)}{\sqrt{n}} & \text{if } \dran = 2, \\ n^{-\tfrac{1}{\dran}} & \text{if }\dran\ge 3.\end{cases}
\end{equation*}
This result is the main motivation for \Cref{rem:ProposedRates}. It can be shown that the rate $n^{-1/\dran}$ for $\dran\ge3$ is sharp if $\mu$ corresponds to a uniform distribution on $\X$. Thus, in this case, it is reasonable to assume a uniform distribution also corresponds to the worst-case rate of $\int_\X Z_n(s)\mu(\mathrm{d}x)\to 0$. Assuming that the difference in designs appearing in $Z_n$ is negligible due to the overall convergence of CSG, we obtain the rate
\begin{equation*}
    \sup_{x\in\X}\; Z_n(x) = \mathcal{O}\left( \ln(n)\cdot n^{-\tfrac{1}{\max\{2,\dran\}}}\right).
\end{equation*}
To see this, we fill $\X\subset\R^{\dran}$ with balls (w.r.t. the norm $\Vert\cdot\Vert_\sX$) of radius $\e >0$ and denote by $N(\e)\in\N$ the number of cells. Due to the dimension of $\X$, we have $\mathcal{O}\big(N(\e)\big)=\e^{-\dran}$. Now, to achieve $\sup_{x\in\X} Z_n(x) < \e$, we need each of these cells to contain at least one of the sample points $(x_i)_{i=1,\ldots,n}$. It is well-known that the expected number of samples we need to draw for this to happen is given by
\begin{equation*}
    N(\e)\sum_{k=1}^{N(\e)}\frac{1}{k} = \mathcal{O}\left(-\e^{-\dran}\ln(\e)\right),
\end{equation*}
where we used
\begin{equation*}
    \sum_{k=1}^n\frac{1}{k} = \mathcal{O}\big(\ln(n)\big) \quad\text{for }n\to\infty.
\end{equation*}
In other words, the convergence rates of $\int_\X Z_n(x)\mu(\mathrm{d}x)\to0$ and $d_W(\mu_n,\mu)\to 0$ are comparable.

Now that we motivated the rates claimed in \Cref{rem:ProposedRates} for the approximation error $\Vert \hat{G}_n - \nabla J(u_n)\Vert$, we use the following proposition to show that the rates of $\Vert u_{n+1}-u_n\Vert\to0$ can not be worse.
\begin{proposition}\label{prop:ProposedWeightsU}
Assume that the approximation error $\Vert \hat{G}_n-\nabla J(u_n)\Vert$ satisfies
\begin{equation*}
    \Vert \hat{G}_n - \nabla J(u_n)\Vert = \mathcal{O}\left( \ln(n)\cdot n^{-\tfrac{1}{\max\{2,\dran\}}}\right).
\end{equation*}
Then, under the assumptions of \Cref{rem:ProposedRates}, it holds 
\begin{equation*}
    \Vert u_{n+1}-u_n\Vert = \mathcal{O}\left( \ln(n)\cdot n^{-\tfrac{1}{\max\{2,\dran\}}}\right).
\end{equation*}
\end{proposition}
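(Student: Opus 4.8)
The plan is to start from the CSG update rule $u_{n+1} = P_\U\big(u_n - \tau \hat G_n\big)$, where $P_\U$ denotes the projection onto the compact convex set $\U$, and to bound $\Vert u_{n+1} - u_n\Vert$ by splitting $\hat G_n$ into the true gradient $\nabla J(u_n)$ plus the approximation error. Using nonexpansiveness of the projection, $\Vert u_{n+1} - u_n\Vert \le \tau \Vert \hat G_n\Vert \le \tau\big(\Vert \nabla J(u_n)\Vert + \Vert \hat G_n - \nabla J(u_n)\Vert\big)$. The second term is $\mathcal O\big(\ln(n)\,n^{-1/\max\{2,\dran\}}\big)$ by hypothesis, so the whole argument reduces to showing that $\Vert \nabla J(u_n)\Vert$ decays at (at least) the same rate. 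This is where the constant step size $\tau < 2/L$ and the $L$-smoothness of $J$ (from \cite[Assumptions 2.2--2.8]{CSGPart1}) enter.

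First I would run the standard descent-lemma computation: by $L$-smoothness, $J(u_{n+1}) \le J(u_n) + \langle \nabla J(u_n), u_{n+1}-u_n\rangle + \tfrac{L}{2}\Vert u_{n+1}-u_n\Vert^2$. Writing $u_{n+1}-u_n = -\tau\hat G_n + r_n$ with $r_n$ the projection residual (which vanishes once iterates are interior, and in general points "inward"), and substituting $\hat G_n = \nabla J(u_n) + e_n$ with $e_n := \hat G_n - \nabla J(u_n)$, one gets after collecting terms a bound of the shape
\begin{equation*}
    J(u_{n+1}) \le J(u_n) - \tau\Big(1 - \tfrac{L\tau}{2}\Big)\Vert \nabla J(u_n)\Vert^2 + C_1\Vert e_n\Vert\,\Vert\nabla J(u_n)\Vert + C_2\Vert e_n\Vert^2 .
\end{equation*}
Since $\tau < 2/L$, the coefficient $c := \tau(1 - L\tau/2)$ is strictly positive. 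Summing over $n$ and using that $J$ is bounded on the compact set $\U$, the left side telescopes, giving $\sum_n \Vert \nabla J(u_n)\Vert^2 < \infty$ provided $\sum_n \Vert e_n\Vert^2 < \infty$ — which holds because $\Vert e_n\Vert = \mathcal O(\ln(n)\,n^{-1/\max\{2,\dran\}})$ is square-summable exactly when $\max\{2,\dran\} \ge 3$, and for $\dran \le 2$ the rate $\ln(n)/\sqrt n$ is also square-summable up to the log, so one must be slightly careful but it still works (indeed $(\ln n)^2/n$ is summable). From summability of $\Vert\nabla J(u_n)\Vert^2$ alone one only gets $\Vert\nabla J(u_n)\Vert \to 0$ without a rate, so the second step is to upgrade this: combine the per-step decrease inequality with a Lyapunov/ODE-comparison argument, using that consecutive gradients cannot differ by more than $L\Vert u_{n+1}-u_n\Vert = \mathcal O(\tau\Vert\nabla J(u_n)\Vert + \tau\Vert e_n\Vert)$, to show that $\Vert\nabla J(u_n)\Vert$ cannot stay large while its square is summable, pinning its decay to the rate of $\Vert e_n\Vert$.

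The main obstacle I expect is precisely this last upgrade — turning $\ell^2$-summability of $\Vert\nabla J(u_n)\Vert$ into the pointwise rate $\mathcal O(\ln(n)\,n^{-1/\max\{2,\dran\}})$. The clean way is a discrete Grönwall-type estimate: let $a_n := \Vert\nabla J(u_n)\Vert^2$ or a suitable energy $V_n := J(u_n) - J^\ast$; from the descent inequality $V_{n+1} \le V_n - c\,a_n + (\text{error terms in } \Vert e_n\Vert)$ and a local lower bound relating $a_n$ to $V_n$ (available near a stationary point, or via a Łojasiewicz/PL-type inequality if one is willing to assume it — otherwise one argues along convergent subsequences using that the whole sequence $u_n$ converges, as established in \cite{CSGPart1}), one deduces $V_n = \mathcal O\big((\ln n)^2\,n^{-2/\max\{2,\dran\}}\big)$ and hence $\Vert\nabla J(u_n)\Vert = \mathcal O\big(\ln(n)\,n^{-1/\max\{2,\dran\}}\big)$. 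Feeding this back into $\Vert u_{n+1}-u_n\Vert \le \tau\Vert\nabla J(u_n)\Vert + \tau\Vert e_n\Vert$ and absorbing both contributions into the same $\mathcal O$ closes the proof. A secondary technical point is handling the projection residual $r_n$: one uses that $\langle r_n, u_{n+1} - y\rangle \le 0$ for all $y\in\U$ (variational inequality for the projection), in particular for $y = u_n$, which ensures $r_n$ only helps the descent and never spoils the sign of the leading term.
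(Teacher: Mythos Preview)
Your route differs from the paper's and contains a genuine gap. The reduction to controlling $\Vert\nabla J(u_n)\Vert$ does not work in the constrained setting of \eqref{eq:SimpleSetting}: since $\U$ is compact and the update is a projected step, the sequence $(u_n)$ may converge to a boundary point $\bar u$ at which $\nabla J(\bar u)\neq 0$ (only the projected gradient vanishes). In that case $\Vert\nabla J(u_n)\Vert$ stays bounded away from zero, so the split $\Vert u_{n+1}-u_n\Vert \le \tau\Vert\nabla J(u_n)\Vert + \tau\Vert e_n\Vert$ cannot yield the claimed rate. Relatedly, the descent inequality you write with the term $-\tau(1-\tfrac{L\tau}{2})\Vert\nabla J(u_n)\Vert^2$ is the \emph{unconstrained} version; in the projected setting the variational inequality for $P_\U$ only gives $\hat G_n^\top(u_{n+1}-u_n)\le -\tfrac{1}{\tau}\Vert u_{n+1}-u_n\Vert^2$, so the negative term appears in $\Vert u_{n+1}-u_n\Vert^2$, not in $\Vert\nabla J(u_n)\Vert^2$. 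Your remark that the projection residual $r_n$ ``only helps'' uses $\langle r_n,u_{n+1}-u_n\rangle\le 0$, but the term you actually need to control is $\langle\nabla J(u_n),r_n\rangle$, which has no sign in general. Even ignoring the constraint issue, the ``upgrade'' from $\sum_n\Vert\nabla J(u_n)\Vert^2<\infty$ to a pointwise rate hinges on a PL/\L ojasiewicz-type inequality that is not among the standing assumptions; without it, square-summability does not imply the desired $\mathcal{O}$-bound.

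The paper takes a shorter, contradiction-based route that avoids $\Vert\nabla J(u_n)\Vert$ entirely. From the descent lemma and the projection property it obtains directly
\[
J(u_{n+1})-J(u_n)\;\le\;\Big(\tfrac{L}{2}-\tfrac{1}{\tau}\Big)\Vert u_{n+1}-u_n\Vert^2 + \big\Vert \hat G_n-\nabla J(u_n)\big\Vert\,\Vert u_{n+1}-u_n\Vert,
\]
and then argues: if $\Vert u_{n+1}-u_n\Vert$ failed the rate while $\Vert \hat G_n-\nabla J(u_n)\Vert$ satisfies it, then eventually $\Vert \hat G_n-\nabla J(u_n)\Vert\le\tfrac{1}{2}(\tfrac{1}{\tau}-\tfrac{L}{2})\Vert u_{n+1}-u_n\Vert$, forcing $J(u_n)$ to be eventually decreasing; telescoping then gives $\sum_n\Vert u_{n+1}-u_n\Vert^2<\infty$, which is presented as the contradiction. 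The point for you is that the right quantity to track throughout is $\Vert u_{n+1}-u_n\Vert$ (the gradient-mapping norm, up to a factor $\tau$), not $\Vert\nabla J(u_n)\Vert$.
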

\begin{proof}
Assume for contradiction that this is not the case. Thus, there exists $N\in\N$ such that
\begin{equation}\label{eq:RatesAssum}
    \left\Vert\nabla J(u_n)-\hat{G}_n\right\Vert\le\tfrac{1}{2}\left(\tfrac{1}{\tau}-\tfrac{L}{2}\right)\Vert u_{n+1}-u_n\Vert_\sU \quad\text{for all }n\ge N.
\end{equation}
By the descent lemma \cite[Lemma 5.7]{DescentUndProjection}, the characteristic property of the projection operator \cite[Theorem 6.41]{DescentUndProjection} and the Cauchy-Schwarz inequality, we obtain
\begin{align*}
    J(u_{n+1})&-J(u_n) \\
        &\le \nabla J(u_n)^\top(u_{n+1}-u_n) + \tfrac{L}{2}\Vert u_{n+1}-u_n\Vert_\sU^2 \\
        &= \hat{G}_n^\top(u_{n+1}-u_n) + \tfrac{L}{2}\Vert u_{n+1}-u_n\Vert_\sU^2 + \left(\nabla J(u_n)-\hat{G}_n\right)^\top(u_{n+1}-u_n) \\
        &\le \left(\tfrac{L}{2}-\tfrac{1}{\tau}\right)\Vert u_{n+1}-u_n\Vert_\sU^2 + \left\Vert \nabla J(u_n)-\hat{G}_n\right\Vert\cdot\Vert u_{n+1}-u_n\Vert_\sU \\
        &= \left(\left(\tfrac{L}{2}-\tfrac{1}{\tau}\right)\Vert u_{n+1}-u_n\Vert_\sU + \left\Vert\nabla J(u_n)-\hat{G}_n\right\Vert\right)\Vert u_{n+1}-u_n\Vert_\sU.
\end{align*}
Combining this with \eqref{eq:RatesAssum} gives $J(u_{n+1})\le J(u_n)$ for all $n\ge N$, since $\tfrac{L}{2}<\tfrac{1}{\tau}$. Thus, the sequence of objective function values $\big(J(u_n)\big)_{n\in\N}$ is monotonically decreasing for all $n\ge N$. By continuity of $J$ and compactness of $\U$, $J$ is bounded and $J(u_n)\to\bar{J}$ for some $\bar{J}\in\R$. Therefore,
\begin{equation*}
    -\infty < \bar{J} - J(u_N) = \sum_{n=N}^\infty \big( J(u_{n+1}-J(u_n)\big) \le \tfrac{1}{2}\left(\tfrac{L}{2}-\tfrac{1}{\tau}\right)\sum_{n=N}^\infty \Vert u_{n+1}-u_n\Vert_\sU^2.
\end{equation*}
Hence, the series
\begin{equation*}
    \sum_{n=N}^\infty \Vert u_{n+1}-u_n\Vert_\sU^2
\end{equation*}
converges, contradicting $\Vert u_{n+1}-u_n\Vert\neq\mathcal{O}\left( \ln(n)\cdot n^{-\tfrac{1}{\max\{2,\dran\}}}\right)$.
\end{proof}
\subsection{Numerical Verification}
We want to verify the proclaimed rates numerically. For this purpose, we consider two optimization problems that can easily be scaled to high dimensions. The first problem is given by
\begin{equation}
    \min_{u\in\U}\quad \frac{1}{2}\int_\X \big\Vert u-x\big\Vert_2^2 \mathrm{d}x\label{eq:NumVerFirst},
\end{equation}
where $\X = \left( -\tfrac{1}{2},\tfrac{1}{2}\right)^{\dran}$ and $\U = [-5,5]^{\dran}$, i.e., $\U$ and $\X$ have the same dimension. The second problem, 
\begin{equation}
    \min_{u\in\U}\quad\frac{1}{2}\int_{-0.5}^{0.5}\big\Vert u - x\cdot\mathds{1}_{\ddes}\big\Vert_2^2\mathrm{d}x, \label{eq:NumVerSec}
\end{equation}
fixes $\dran = 1$, while $\U=[-5,5]^{\ddes}$. Here, $\mathds{1}_{\ddes}$ represents the vector $(1,1,\ldots,1)^\top\in\R^{\ddes}$. Note that, in both settings, we have $L_j = 1$. Thus, by \Cref{sec:ErrorEstimates}, we have
\begin{equation*}
    \big\Vert \hat{G}_n - \nabla J(u_n)\big\Vert \le \sup_{x\in\X}\; Z_n(x) \approx \max_{k=1,\ldots,n} Z_n(x_k).
\end{equation*}
The optimal solution to \eqref{eq:NumVerFirst} and \eqref{eq:NumVerSec} is given by the zero vector $u^\ast = 0\in\U$.

In our analysis, for different values of the dimensions $\dran,\ddes\in\N$, problems \eqref{eq:NumVerFirst} and \eqref{eq:NumVerSec} were initialized with 500 random starting points. The constant step size of CSG was chosen as $\tau = \tfrac{1}{2}$. We track $\Vert u_n - u^\ast\Vert$ and $\max_{k=1,\ldots,n} Z_n(x_k)$ during the optimization process and compare the median of the 500 runs to the rates predicted in \Cref{rem:ProposedRates}. The results can be seen in \Cref{fig:RatesZequidim,fig:RatesUequidim,fig:RatesZdim1,fig:RatesUdim1}. Note that, for the plots of the predicted rates, we omitted the factor $\ln(n)$. Therefore, the corresponding graphs are straight lines, where the slope $-\tfrac{1}{\max\{2,\dran\}}$ is equal to the asymptotic slope of the predicted rate, since
\begin{equation*}
    \ln(n)\cdot n^{-\tfrac{1}{\max\{2,\dran\}}} = \mathcal{O}\left( n^{-\tfrac{1}{\max\{2,\dran\}}+\e}\right)\quad\text{for all }\e>0.
\end{equation*}

In the equidimensional, i.e., $\dim(\X)=\dim(\U)$, setting \eqref{eq:NumVerFirst}, the experimentally obtained values for $Z_n$ almost perfectly match the claimed rates. For $\Vert u_n-u^\ast\Vert$, the observed rates also match the predictions for very small and large dimensions. For $\dran=3,4,5$, the convergence obtained in the experiments was even slightly faster than predicted. Investigating the results for \eqref{eq:NumVerSec}, it is clearly visible that increasing the design dimension $\ddes$, while keeping the parameter dimension $\dran$ fixed, has no influence on the obtained rates of convergence, indicating that CSG is able to efficiently handle large-scale optimization problems.  
\begin{figure}
    \centering
    \begin{minipage}[t][][c]{0.48\textwidth}
        \centering
        \includegraphics[width = \textwidth]{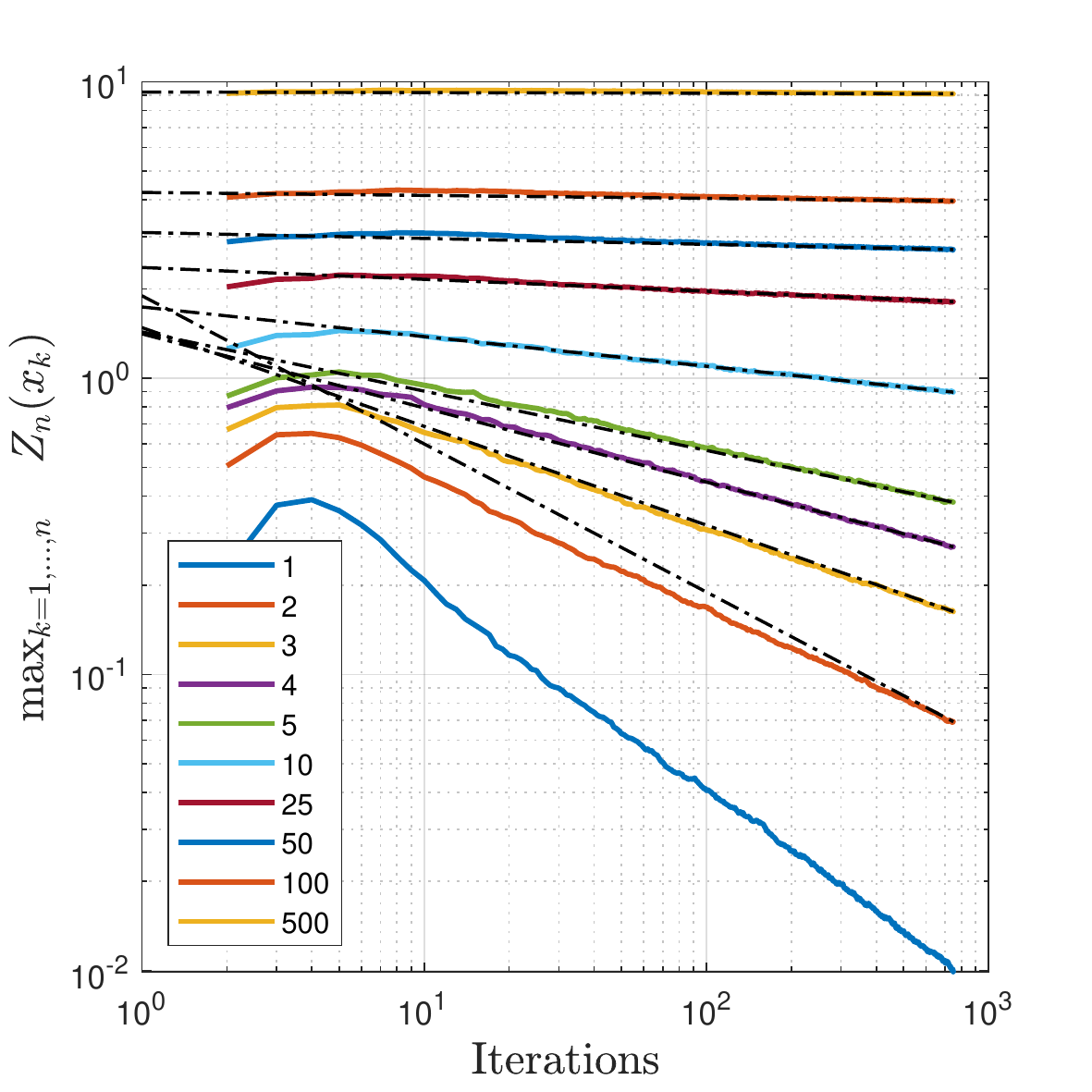}%
        \caption{The bold lines represent the median values of $\max_{k=1,\ldots,n}Z_n(x_k)$ for the equidistant problem \eqref{eq:NumVerFirst} with respect to the iteration counter. The different colors indicate the different dimensions $\dran \in \{1,2,\ldots,500\}$. The dotted lines correspond to the respective predicted rates $n^{-\tfrac{1}{\max\{2,\dran\}}}$. Since the predictions for $\dran=1$ and $\dran=2$ are equal, only the case $\dran=2$ is shown.}
        \label{fig:RatesZequidim}
    \end{minipage}\hfill
    \begin{minipage}[t][][c]{0.48\textwidth}
        \centering
        \includegraphics[width = \textwidth]{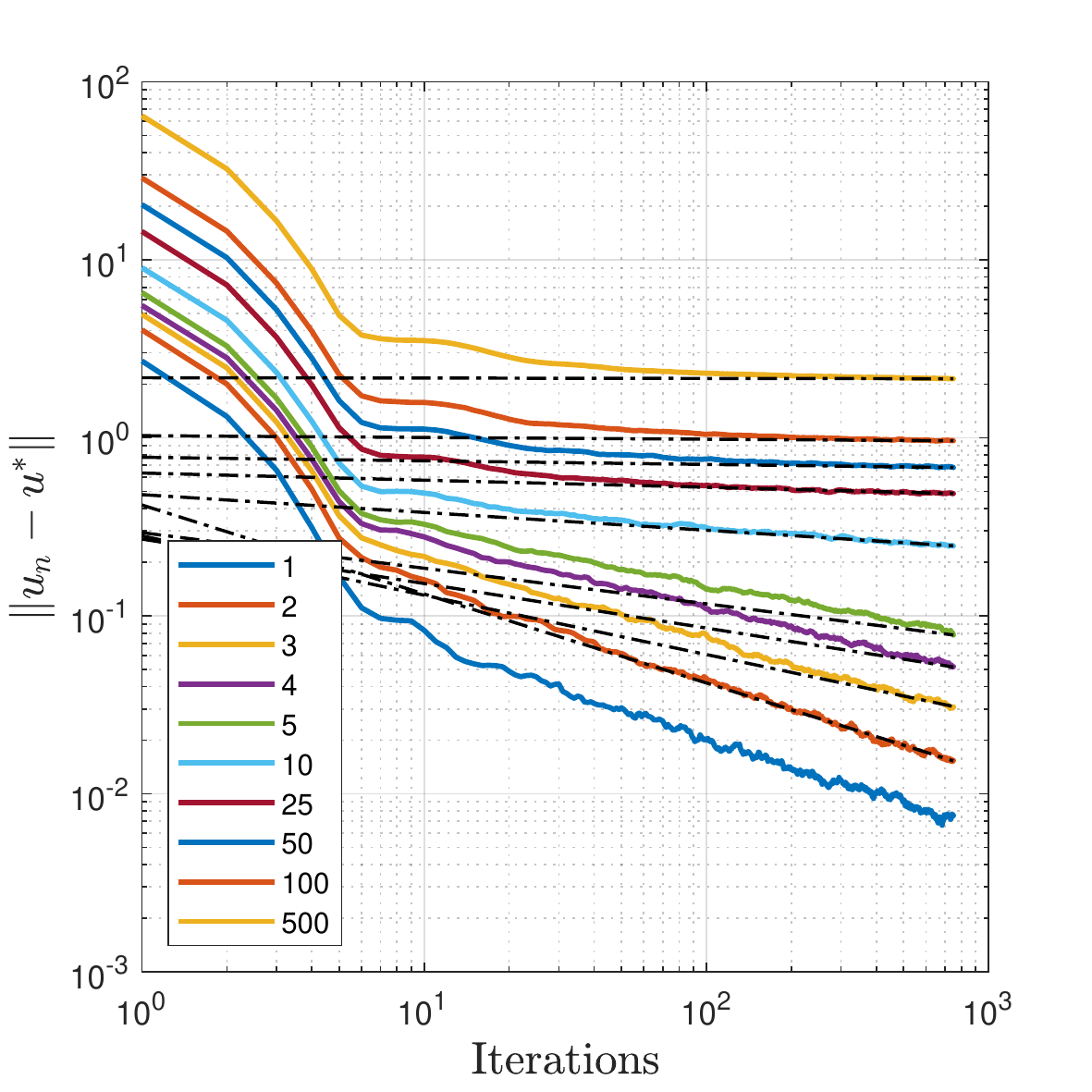}%
        \caption{Median values of $\Vert u_n-u^\ast\Vert$ in the equidimensional setting \eqref{eq:NumVerFirst} for different choices of $\dran\in\{1,2,\ldots,500\}$. For each dimension, the predicted worst-case asymptotic line $n^{-\tfrac{1}{\max\{2,\dran\}}}$ is indicated by the dotted line. Again, we omit the prediction for $\dran=1$, since it has the same slope as in the case for $\dran=1$.}
        \label{fig:RatesUequidim}
    \end{minipage}
    \begin{minipage}[t][][c]{0.48\textwidth}
        \centering
        \includegraphics[width = \textwidth]{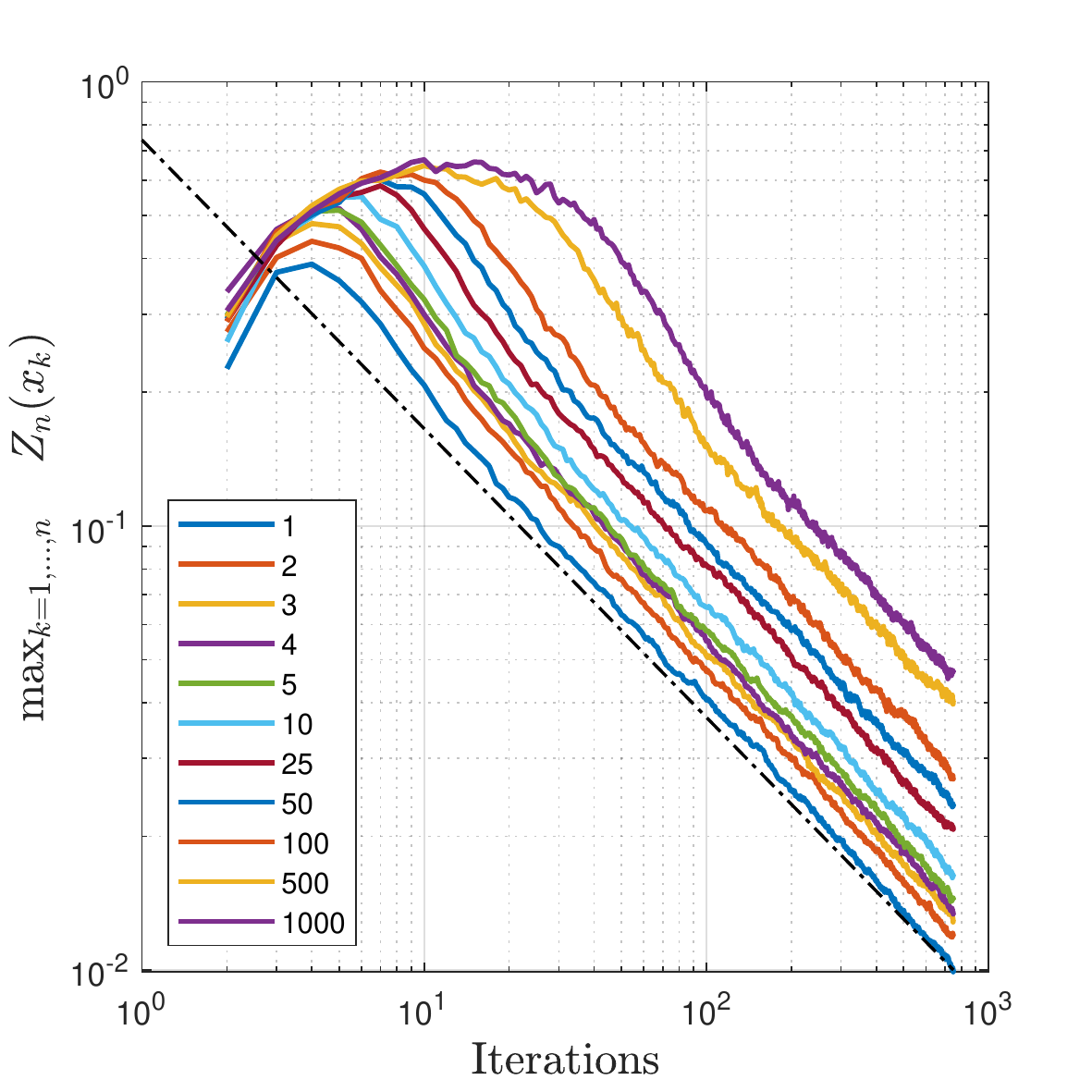}%
        \caption{Results for the median of $\max_{k=1,\ldots,n}Z_n(x_k)$ in setting \eqref{eq:NumVerSec} for different dimensions $\ddes\in\{1,2,\ldots,1000\}$, indicated by different colors. As we conjectured, the asymptotic slope of all curves is equal, since $\dran=1$ is fixed. As a point of reference, we added the graph of $n^{-0.65}$, represented by the dotted line.}
        \label{fig:RatesZdim1}
    \end{minipage}\hfill
    \begin{minipage}[t][][c]{0.48\textwidth}
        \centering
        \includegraphics[width = \textwidth]{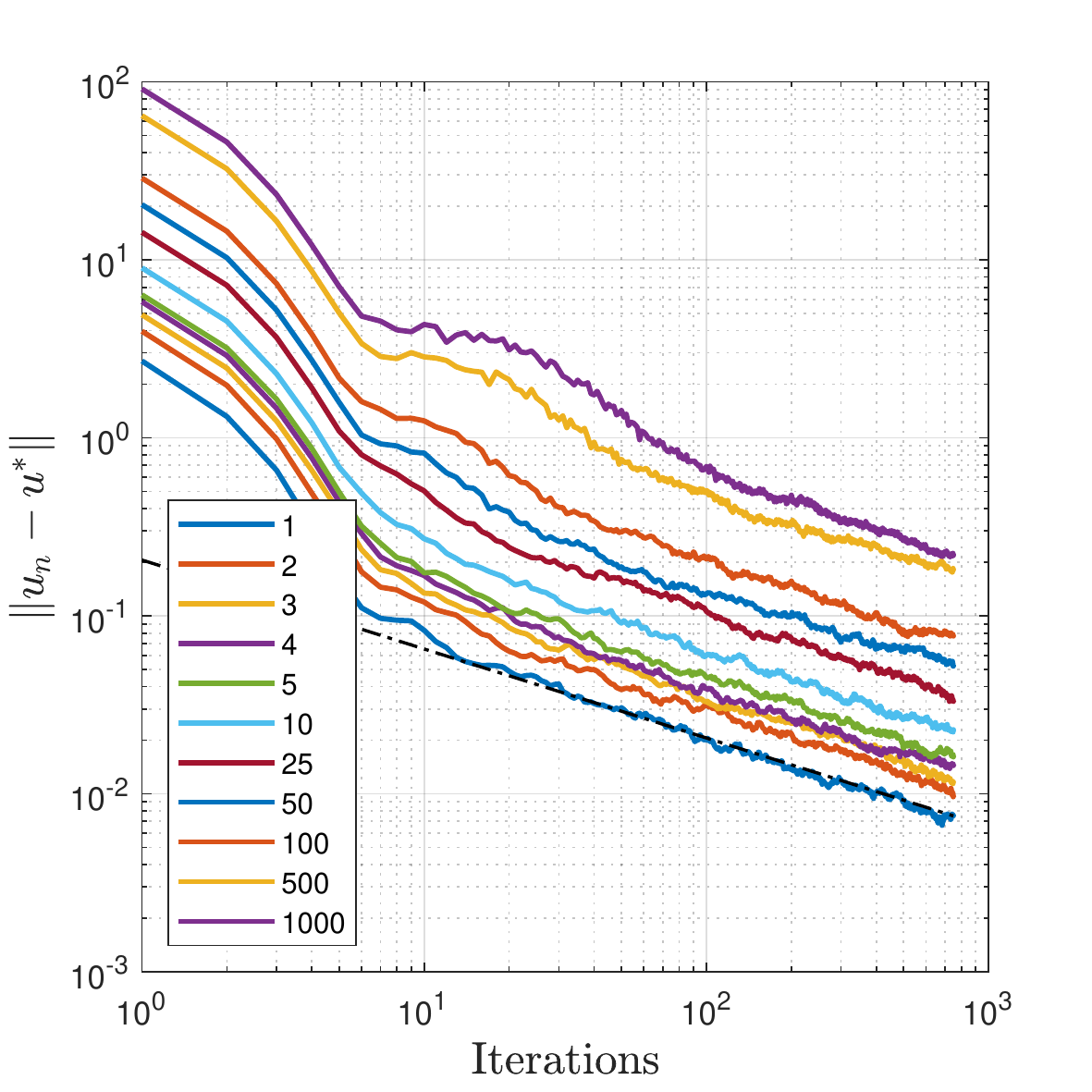}%
        \caption{Median distance to the optimal solution $u^\ast$ during the course of the iterations for $\ddes\in\{1,2,\ldots,1000\}$. Again, the asymptotic slope of all curves is equal and we added the line corresponding to $n^{-0.65}$ for comparison.}
        \label{fig:RatesUdim1}
    \end{minipage}
\end{figure}
\subsection{Circumventing Slow Convergence}\label{sec:CircumventSlow}
As we have seen so far, the convergence rate of the CSG method worsens with increasing dimension of integration $\dran\in\N$. However, it is possible to circumvent this behavior, if the problem admits additional structure. Assume that there exist suitable $\X_1,\X_2,\mu_1,\mu_2,f_1$ and $f_2$ such that the objective function appearing in \eqref{eq:SimpleSetting} can be rewritten as
\begin{equation*}
    J(u) = \int_\X j(u,x)\mu(\mathrm{d}x) = \int_{\X_1} f_1 \left(u,x,\int_{\X_2} f_2(u,y)\mu_2(\mathrm{d}y)\right)\mu_1(\mathrm{d}x).
\end{equation*}
Assume further, that $\X_1,\X_2,\mu_1,\mu_2,f_1$ and $f_2$ satisfy the corresponding equivalents of \cite[Assumptions 2.2 - 2.8]{CSGPart1}. 

Now, we can independently calculate integration weights $(\alpha_k)_{k=1,\ldots,n}$ and $(\beta_k)_{k=1,\ldots,n}$ for the integrals over $\X_1$ and $\X_2$, respectively. The corresponding CSG approximations (indicated by hats) are then given by
\begin{align*}
    f^{(n)} &:= \int_{\X_2} f_2(u,y)\mu_2(\mathrm{d}y) \approx \sum_{i=1}^n \alpha_i f_2(u_i,y_i) =: \hat{f}_n, \\
    g^{(n)} &:= \int_{\X_2} \nabla_1 f_2(u,y)\mu_2(\mathrm{d}y)  \approx \sum_{i=1}^n \alpha_i\nabla_1 f_2(u_i,y_i) =: \hat{g}_n, \\
    \nabla J(u_n)& \approx \sum_{i=1}^n\beta_i\Big( \nabla_1 f_1 (u_i,x_i,\hat{f}_i) + \partial_3 f_1(u_i,x_i,\hat{f}_i)\cdot\hat{g}_i\Big)=:\hat{G}_n.
\end{align*}
The same steps as performed in the proof of \cite[Lemma 4.7]{CSGPart1} yield the existence of a constant $C_1>0$, depending only on the Lipschitz constants of $\nabla f_1$ and $\nabla f_2$, such that
\begin{align}
    &\Big\Vert \nabla J(u_n) - \hat{G}_n \Big\Vert \nonumber\\
    &\le C_1\! \Big( d_W(\mu_1,\nu^{\beta}_n)+\sup_{x\in\X_1}\min_{k=1,\ldots,n}\!\!\big( \Vert u_n - u_k\Vert_\sU\!\!\! + \Vert x - x_k\Vert_{_{\X_1}}\!\!\! + \vert \hat{f}_n - \hat{f}_k\vert\big) \Big). \label{eq:SaveRatesError}
\end{align}
Here, $\nu^\beta_n$ corresponds to the measure related to the integration weights $(\beta_k)_{k=1,\ldots,n}$, see \cite[Assumption 2.8]{CSGPart1}.
Now, denoting by $C_2>0$ a constant depending on the Lipschitz constant $L_{f_2}$ of $f_2$, we decompose the last term: 
\begin{align}
    \vert\hat{f}_n - &\hat{f}_k\vert \nonumber\\
    &\le \vert \hat{f}_n - f_n\vert + \vert \hat{f}_k - f_k\vert + \vert f_n-f_k\vert \nonumber\\
    &\le\vert \hat{f}_n - f_n\vert + \vert \hat{f}_k - f_k\vert+
    L_{f_2} \Vert u_n-u_k\Vert_\sU \nonumber\\
    &\le C_2\Big(\Vert u_n-u_k\Vert_\sU + \sup_{y\in\X_2}\min_{i=1,\ldots,n} \big( \Vert u_n - u_i\Vert_\sU + \Vert y - y_i\Vert_{_{\X_2}}\big) \nonumber\\
    &\qquad+ \sup_{y\in\X_2}\min_{i=1,\ldots,k} \big( \Vert u_k - u_i\Vert_\sU + \Vert y - y_i\Vert_{_{\X_2}}\big) +d_W(\mu_2,\nu^{\alpha}_n) + d_W(\mu_2,\nu^{\alpha}_k)\Big) \nonumber\\
    &= C_2\Big(\Vert u_n-u_k\Vert_\sU + \sup_{y\in\X_2} Z_n(y) + \sup_{y\in\X_2} Z_k(y) +d_W(\mu_2,\nu^{\alpha}_n) + d_W(\mu_2,\nu^{\alpha}_k)\Big).\label{eq:SaveRatesError2}
\end{align}
Assuming that the convergence of the sequence $(u_n)_{n\in\N}$ generated by the CSG method implies
\begin{equation*}
    \mathcal{O}\left(\sup_{y\in\X_2} Z_n (y)\right) =  \mathcal{O}\left(\sup_{y\in\X_2} Z_k (y)\right) \quad\text{and}\quad \mathcal{O}\big( d_W(\mu_2,\nu^{\alpha}_n)\big) = \mathcal{O}\big( d_W(\mu_2,\nu^{\alpha}_k)\big),
\end{equation*}
we insert \eqref{eq:SaveRatesError2} into \eqref{eq:SaveRatesError}, to obtain
\begin{equation*}
    \big\Vert \nabla J(u_n)-\hat{G}_n\Vert \le C(C_1,C_2)\Big( d_W(\mu_1,\nu^{\beta}_n) + d_W(\mu_2,\nu^{\alpha}_n) + \sup_{x\in\X_1} Z_n(x) + \sup_{y\in\X_2} Z_n(y)\Big). 
\end{equation*}
Therefore, by the same arguments as in \Cref{sec:ConvergenceRatesTheory}, we claim
\begin{align*}
    \big\Vert \nabla J(u_n)-\hat{G}_n\big\Vert &= \mathcal{O}\left( \ln(n)\cdot n^{-\tfrac{1}{\max\{2,\dim(\X_1),\dim(\X_2)\}}}\right), \\
    \Vert u_{n+1}-u_n\Vert &= \mathcal{O}\left( \ln(n)\cdot n^{-\tfrac{1}{\max\{2,\dim(\X_1),\dim(\X_2)\}}}\right).
\end{align*}
In conclusion, we claim that, assuming the objective function can be rewritten in terms of nested expectation values
\begin{equation*}
    J(u) = \int_{\X_1} f_1\left(u,x_1,\int_{\X_2}f_2\left( u,x_2,\int_{\X_3}f_3(\cdots)\mu_3(\mathrm{d}x_3) \right)\mu_2(\mathrm{d}x_2)\right)\mu_1(\mathrm{d}x_1),
\end{equation*}
the convergence rate of the CSG method depends only on the \textit{largest} dimension of the occurring $\X_i$, which may be much lower when compared to $\dim(\X)$.

Since this is again a claim and not a rigorous proof, we validate this assumption numerically. For this, we once more consider \eqref{eq:NumVerFirst} and initialize it with 500 random starting points. This time, however, we utilize the fact that the objective function can be written as
\begin{equation*}
    J(u) = \frac{1}{2}\int_{\X} \Vert u-x\Vert_2^2\mathrm{d}x = \frac{1}{2}\int_{\X} \Big( \sum_{i=1}^{\dran}(u_i-x_i)^2\Big) \mathrm{d}x = \frac{1}{2}\sum_{i=1}^{\dran} \int_{-\tfrac{1}{2}}^{\tfrac{1}{2}}(u_i-x_i)^2\mathrm{d}x_i.
\end{equation*}
Thus, we can group the independent coordinates into subintegrals of arbitrary dimension, allowing us to study our claim for a large number of different regroupings without having to change the whole problem formulation. The results for several different decompositions and 500 random starting points in the case $\dran=100$ are shown in \Cref{fig:rates_split}. The improved rates of convergence are clearly visible, independent on whether the subgroup dimensions are equal or not. As claimed above, the highest remaining dimension of integration determines the overall convergence rate of CSG.
\begin{figure}
  \begin{minipage}[c]{0.5\textwidth}
    \includegraphics[width=\textwidth]{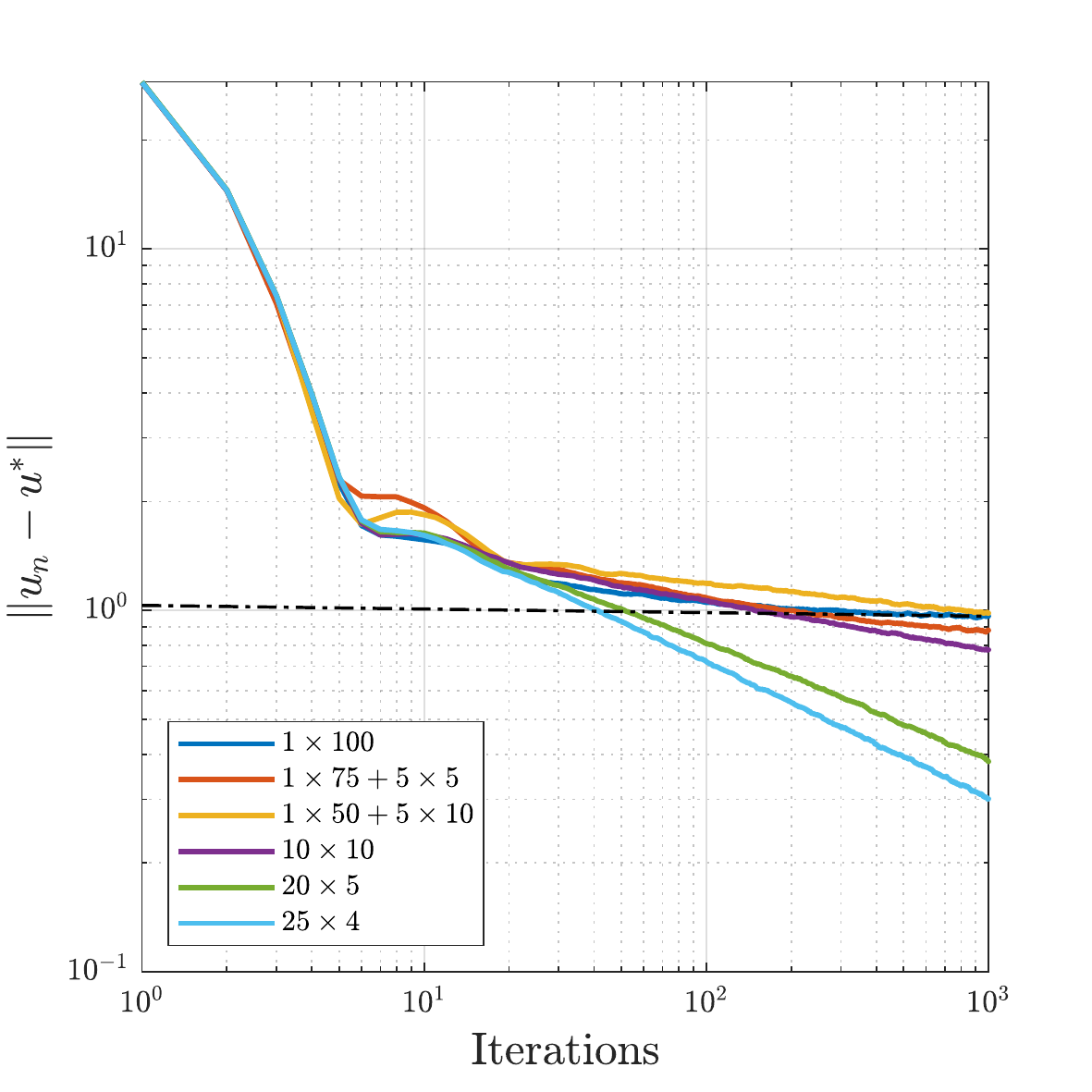}
  \end{minipage}\hfill
  \begin{minipage}[c]{0.45\textwidth}
    \caption{Median total error $\Vert u_n-u^\ast\Vert$ of the CSG iterates for \eqref{eq:NumVerFirst}, for $\dran=100$. The integral over $\X=\left(-\tfrac{1}{1},\tfrac{1}{2}\right)^{\dran}$ has been decomposed into several integrals of smaller dimension. The labels in the bottom left give details about the decomposition, e.g., the orange line corresponds to splitting the whole integral into one integral of dimension 75 and 5 integrals of dimension 5. The dotted line indicates the expected rate of convergence obtained by the CSG method without splitting up the integral.}
    \label{fig:rates_split}
  \end{minipage}
\end{figure}
\FloatBarrier
\section{Conclusion and Outlook}
In this contribution, we presented a numerical analysis of the CSG method. The practical performance of CSG was tested for two applications from nanoparticle design optimization with varying computational complexity. For the low-dimensional problem formulation, CSG was shown to perform superior when compared to the commercial \textit{fmincon} blackbox solver. 
The high-dimensional setting provided an example, for which classic optimization schemes (stochastic as well as deterministic) from literature do not provide optimal solutions within reasonable time.

Convergence rates for CSG with constant step size were proposed and analytically motivated. They were shown to agree with numerically obtained convergence rates in several different instances. Moreover, in the case that the objective function admits additional structure, techniques to circumvent slow convergence for high dimensional integration domains were presented.

While the proposed convergence rates for CSG agree with our experimental results, it remains an open question if they can be proven rigorously. Furthermore, even though the choice of a metric for the nearest neighbor approximation in the integration weights is irrelevant for the convergence results, a problem specific metric could significantly improve the performance of CSG by exploiting additional structure, which might be lost by utilizing an arbitrary metric. How to automatically obtain such a metric during the optimization process requires further research.

\bmhead{Data Availability Statement}
The simulation datasets generated during the current study are available from the corresponding author on reasonable request.
\bmhead{Conflict of Interests}
The authors have no relevant financial or non-financial interests to disclose.
\bibliography{Literature}


\end{document}